\newcommand{\f}[2]{\frac{#1}{#2}}
\newcommand{\ctilde}{\widetilde{c}}
\newcommand{\Om}{\Omega}
\newcommand{\Ombar}{\overline{\Omega}}
\newcommand{\Omtilde}{\widetilde{\Om}}
\newcommand{\dOm}{\partial\Om}
\newcommand{\delny}{\partial_{\nu}\ }
\newcommand{\nn}{\nonumber}
\newcommand{\norm}[2][]{\|#2\|_{#1}}
\newcommand{\Lp}[2][\Omega]{L^{#2}(#1)}
\newcommand{\kl}[1]{\left( #1 \right)}
\newcommand{\sub}{\subset}
\newcommand{\set}[1]{\left\{#1\right\}}
\newcommand{\na}{\nabla}
\newcommand{\io}{\int_\Omega}
\newcommand{\wtilde}{\widetilde{w}}
\newcommand{\folge}[2][n]{(#2)_{#1\in ℕ}}
\newtheorem{theorem}{Theorem}[section]
\newtheorem{lemma}[theorem]{Lemma}
\newtheorem{proposition}[theorem]{Proposition}
\newtheorem{cor}[theorem]{Corollary}
\title{Stationary solutions to a chemotaxis--consumption model with realistic boundary conditions}
\author{Marcel Braukhoff\footnote{Institute for Analysis and Scientific Computing, TU Wien, Wiedner Hauptstr. 8-10, 1040 Wien, Austria; e-mail: braukhoff@posteo.de}  \and 
  Johannes Lankeit\footnote{
  Department of Applied Mathematics and Statistics, Comenius University, Mlynsk\'a dolina, 84248 Bratislava, Slovakia; e-mail: jlankeit@math.uni-paderborn.de}\ \footnote{Institut für Mathematik, Universität Paderborn, Warburger Str. 100, 33098 Paderborn, Germany}
}
\begin{document}
\maketitle 
\begin{abstract}
\setlength{\parindent 0pt}\setlength{\parskip 2pt} \noindent \small 
 Previous studies of chemotaxis models with consumption of the chemoattractant (with or without fluid) have not been successful in explaining pattern formation even in the simplest form of concentration near the boundary, which had been experimentally observed.

 Following the suggestions that the main reason for that is usage of inappropriate boundary conditions, 
 in this article we study solutions to the stationary chemotaxis system 
\[
 \begin{cases}
  0 = \Delta n - \nabla\cdot(n\nabla c)\\
  0 = \Delta c - nc 
 \end{cases}
\]
in bounded domains $\Omega\subset\mathbb{R}^N$, $N\ge 1$, under no-flux boundary conditions for $n$ and the physically meaningful condition 
\[
 \partial_{\nu} c =  (\gamma-c)g
\]
on $c$, with given parameter $\gamma>0$ and $g\in C^{1+\beta}(\Omega)$ satisfying $g\ge 0$, $g \not\equiv 0$ on $\partial \Om$. We prove existence and uniqueness of solutions for any given mass $\int_\Omega n > 0$. These solutions are non-constant.\\
{\footnotesize 
\textbf{Keywords:} chemotaxis; stationary solution; signal consumption\\
\textbf{MSC (2010):} 35Q92; 92C17; 35J57; 35A02
}
\end{abstract}

\section{Introduction}

\subsection{Chemotaxis--consumption models}
Chemotaxis models with signal consumption, like 
\begin{equation}\label{eq:ctconsumption-parabolic}
 \begin{cases}
  n_t = \Delta n - \nabla\cdot(n\nabla c),\\
  c_t = \Delta c - nc,
 \end{cases}
\end{equation}
have received quite some interest over the last decade, especially in the context of chemotaxis--fluid models that had been introduced in  \cite{tuval}, see, e.g., \cite{tao_bdoxygenconsumption}, Sections 4.1 and 4.2 of the survey \cite{BBTW} or the introduction of \cite{cao_lankeit} and references therein. 

Here, $n$ denotes the concentration of some bacteria (for example of the species \textit{Bacillus subtilis}), whose otherwise random motion is partially directed towards higher concentrations $c$ of a signalling substance (oxygen) they consume.  

Accounting for a liquid environment, these equations are then usually coupled to incompressible Navier--Stokes or Stokes equations with a driving force $-n\nabla \Phi$ arising from density differences between fluid containing large or small amounts of bacteria:

\begin{equation}\label{eq:ctns}
 \begin{cases}
  n_t + u\cdot \nabla n= \Delta n - \nabla\cdot(n\nabla c),\\
  c_t + u \cdot \nabla c= \Delta c - nc,\\
  u_t + u\nabla u = \Delta u +\nabla P - n\nabla \Phi, \qquad \nabla \cdot u= 0.
 \end{cases}
\end{equation}
 
One of the main questions motivating the study of this system and its relatives was whether and how \eqref{eq:ctns} can account for the emergence of large scale coherent patterns, as observed experimentally in \cite{dombrowskietal,tuval}. 
There were also other motivations; see, e.g., the question posed in the title of \cite{win_transAMS}, or 
the introduction of \cite{blm3} and references therein; 
but at least with regard to the first-mentioned matter, 
results on the long-term behaviour of solutions to \eqref{eq:ctns}, paint a different picture: 
 
Not only small-data solutions to \eqref{eq:ctns} in three-dimensional domains (see e.g. \cite{cao_lankeit} or \cite{chae_kang_lee,xia_ye,tan_zhou}), but also every classical solution in $\Om\subset ℝ^2$  (\cite{win_fluid_konvergenzresultat,zhang_li_decay,jiang_wu_zheng,fan_zhao}) and even every ``eventual energy solution'' to \eqref{eq:ctns} converges to the stationary, constant state $(\f1{|\Om|}\io n_0,0,0)$, \cite{win_transAMS}. 

Also if the diffusion in the first equation of \eqref{eq:ctns} is of porous-medium type (see e.g. \cite{difrancesco_lorz_markowich}) and the chemotaxis term is of a more general form (\cite{win_CalcVarPDE}), solutions tend to a constant equilibrium. (Analogues for the fluid-free settings exist: \cite{tao_win,fan_jin,lswx}.)


Even the combination with logistic population growth terms ($+κn-μn^2$), whose interplay with chemotaxis systems of signal production type 
is known to result in quite colourful and nontrivial dynamics (\cite{hillenpainter_spatiotemporalchaos,win_transient,lankeit_thresholds}), 
does nothing to change these circumstances: 
In \cite{lankeit_m3as}, weak solutions have been constructed that eventually become smooth -- and converge to the spatially homogeneous state $(\f{κ}{μ},0,0)$. Also in the fluid-free setting every bounded solution converges to the constant state, \cite{lankeit_wang}. This trend towards homogeneous equilibria moreover extends to scenarios of food-supported proliferation, \cite{win_nutrienttaxis}.

Yet, apparently, convergence to a constant, and hence structureless, state suggests the opposite of pattern formation.---Nevertheless, it might be possible that interesting dynamics occur within a smaller timeframe (cf. \cite{win_transient,lankeit_thresholds} for a corresponding observation in signal-production chemotaxis systems; and even finite-time blow-up has not been excluded (but neither proven) for some settings); 
there is, however, 
another possible 
culprit for this strong discrepancy between experimental and theoretical outcomes  
that, in our opinion, should be investigated first: 

\subsection{The boundary condition}
In \cite[page 2279]{tuval}, Tuval et al.\ state that the ``boundary conditions [...] are central to the global flows and possible singularities''. 
All of the above-mentioned results use homogeneous Neumann boundary conditions for both $n$ and $c$, which may be mathematically convenient, but is not entirely realistic, for while it seems reasonable to assume that the bacteria do not leave or enter the domain (a drop of water), and oxygen does not penetrate the part of the boundary that is comprised of the area of contact between the drop and a surface on which it is resting, the interface between the fluid and surrounding air certainly does admit passage of oxygen, especially if its concentration in the water has plummeted due to activity of the bacteria. 

Instead we propose to prescribe the following boundary condition, a derivation of which can be found in 
\cite{braukhoff}: 
In accordance with Henry's law modelling the dissolution of gas in water, cf. \cite[sec. 5.3, p.144]{atkins}, we consider
\begin{equation}\label{our-bc-simplified}
 \delny c(x) = (γ-c(x))g(x)\qquad \text{for }x\in \partial \Om, 
\end{equation}
where the constant $γ>0$ denotes the maximal saturation of oxygen in the fluid and the influx of oxygen is proportional to the difference between current and maximal concentration on the fluid surface (see \cite[section 5.3 on page 144]{atkins}). The function $g$ models the absorption rate of the gaseous oxygen into the fluid. The gaseous oxygen concentration can assumed to be constant, because the oxygen-diffusion coefficient  in air is three orders of magnitude larger than that in the	fluid \cite[page 2279]{tuval}. 
This would lead to a constant absorption rate (i.e. $g=const.$) if all of the boundary were part of the water--air interface. Since this is not the case in general, we will incorporate a function accounting for different permeabilities of different parts of the boundary: no flux should take place on the boundary between water and a solid surface:
We consider $g=0$ on the solid--water interface and $g\ge 0$, but $g\not\equiv 0$, so that there is some water--air boundary. 

In \cite{tuval}, it is assumed that the absorption rate at the  water--air boundary is large such that the Dirichlet boundary conditions 
\[0=(c(x)-\gamma)\qquad \text{for }x\in \partial \Om\]
were posed, 
corresponding to a formal limit of $g\to\infty$ in \eqref{our-bc-simplified}. (Cf. also Proposition \ref{boundaryproposition} below.)

We will show 
that then the stationary system corresponding to \eqref{eq:ctconsumption-parabolic}, i.e. 
\begin{align}\label{system}
\begin{cases}
 0 = Δn- \nabla\cdot(n\nabla c)&\text{in } \Om, \\
 0= Δc-nc&\text{in } \Om,\\
 \delny c = (γ-c)g&\text{on } \dOm,\\
 \delny n = n\delny c&\text{on } \dOm
\end{cases}
\end{align}
has a solution, which moreover is unique for each prescribed total bacterial mass $\io n$ and is non-constant.

\subsection{Classical, signal-production chemotaxis systems}
This result offers a contrast to the classical Keller--Segel model 
\begin{equation}\label{classKS}
 \begin{cases}
  u_t= Δu - \nabla \cdot (u\nabla v)&\text{in } \Om,\\
  v_t= Δv - v +u &\text{in } \Om,\\
  \delny u=\delny v = 0 &\text{on } \dOm, 
 \end{cases}
\end{equation}
whose stationary solutions by the striking result of \cite{feireisl_laurencot_petzeltova} are known to serve as limit for global solutions, but form a much more complicated set, see \cite{senba_suzuki}; in particular solutions are non-unique: Constants obviously solve the stationary problem of \eqref{classKS}, but there are also non-constant solutions, see \cite[Sec. 6]{biler} for the radially symmetric case and \cite[Sec. 5]{horstmann_nonsymmetric}, \cite{wang-wei} as well as \cite{senba_suzuki} and \cite{kabeya_ni}. 

%
%

The situation for related systems, like Keller--Segel with logarithmic sensitivity, is similar, as studies of the ``Lin--Ni--Takagi problem'' show (see \cite{lin_ni_takagi} and its descendants). More on the question of nonhomogeneous stationary solutions and bifurcation analysis in a large class of Keller--Segel like systems (that is, with quite diverse parameters and possible nonlinearities, but always homogeneous Neumann boundary conditions) can also be found in the classical article \cite{schaaf} by R. Schaaf. 



\subsection{Previous work on chemotaxis--consumption models with other boundary conditions}
Signal-consumption models with boundary conditions different from homogeneous Neumann conditions have primarily appeared in numerical experiments that recover patterns like those experimentally observed, see \cite{tuval}, \cite{chertock_etal_numeric}, \cite{lee_kim_numerical_bioconvection}.

Analytical results that include such boundary conditions are the following: In \cite{Lorz}, the paper that started the mathematical study of chemotaxis-fluid systems and proves local existence of weak solutions, an inhomogeneous Dirichlet condition for $c$ on parts of the boundary of a bounded domain in $ℝ^2$ is mentioned; we have already referred to \cite{braukhoff}, where global weak solutions to a chemotaxis fluid model with logistic growth are proven to exist under \eqref{our-bc-simplified}. The recent work \cite{preprint_zhaoyin} deals with the domain $ℝ^2\times(0,1)$, imposing inhomogeneous Dirichlet condition $c=γ$ on the ``top'' surface and proving existence and convergence of solutions starting ($H^2$-)close to $(0,γ,0)$. (For technical reasons, the proof needs a consumption term that grows at least quadratically in $n$.) 
Non-zero boundary data, in form of either a Dirichlet or a Neumann condition, are also posed in \cite{knosalla_global}, where the system 
\[
 \begin{cases}
  n_t=n_{xx} - (n E(c)_x)_x,\\
  c_t=c_{xx} - n E(c)
 \end{cases}
\]
is studied in one-dimensional domains, for an ``energy function'' $E$ which has only one local maximum and satisfies $E(c)\to 0$ for both $c\to 0$ and $c\to \infty$. Existence of global, bounded solutions is shown. Steady states of this system have been considered in \cite{knosalla_nadzieja_stationary}. Their existence and uniqueness depends on the relation between the total mass $\io n$ and the size of the boundary data. 

Up to now, results on stationary states in higher dimension and any treatment of the boundary condition \eqref{our-bc-simplified} beyond existence of weak solutions to the parabolic problem are missing. This is the gap we intend to fill with the present article. 

%
%
%
%
%
%
%
%
\subsection{Statement of the main result and plan of the article }

In order to give the main theorem, let us first specify the more technical assumptions that we will make:  

With some numbers $N\in ℕ$ and $β_*\in(0,1)$, assumed to be fixed throughout the article, we will usually pose the following condition on the domain:
\begin{equation}\label{cond:Om}
 \Om\subset ℝ^{N} \text{ is a bounded domain with } C^{2+β_*}\text{-boundary}. 
\end{equation}

As motivated above, in stating the boundary condition we will use some function 

\begin{equation}\label{condition:g1}
 g\in C^{1+β_*}(\Ombar),\quad g \ge 0 \text{ on } \dOm, \quad
 g \not\equiv 0 \text{ on } \partial\Om.
\end{equation}

With these, we can state our main results: 

\begin{theorem}\label{thm:main}
 Let $\Om$ satisfy \eqref{cond:Om}, let $g$ be as in \eqref{condition:g1} and $γ>0$. 

 For every $m>0$ there is exactly one pair $(n,c)\in (C^{2}(\Om)\cap C^1(\Ombar))^2$ that solves \eqref{system} and satisfies $\io n = m$. This solution is positive in $\Ombar$ in both components, but not constant. 
 \end{theorem}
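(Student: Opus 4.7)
The plan is to reduce \eqref{system} to a single scalar Robin problem for $c$ and then analyse the mass as a function of the parameter appearing in that reduction. Setting $\tilde n := n e^{-c}$, I rewrite the first PDE in divergence form as $\nabla\cdot(e^c \nabla \tilde n) = 0$, and the no-flux boundary condition becomes $\partial_\nu \tilde n = 0$. Testing this linear Neumann problem against $\tilde n$ yields $\io e^c|\nabla \tilde n|^2 = 0$, so $\tilde n$ is a constant $\kappa$, i.e.\ $n = \kappa e^c$; the mass constraint $\io n = m > 0$ ensures $\kappa > 0$ and hence $n > 0$ throughout $\Ombar$. Substituting this into the second PDE reduces \eqref{system} to the scalar Robin problem
\begin{equation}\label{scalarprob}
\Delta c = \kappa c e^c \text{ in } \Omega, \qquad \partial_\nu c = (\gamma - c) g \text{ on } \partial\Omega.
\end{equation}

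For each fixed $\kappa > 0$ I would solve \eqref{scalarprob} by the method of sub- and supersolutions with $\underline c \equiv 0$ and $\bar c \equiv \gamma$, both of which satisfy the required one-sided inequalities since $g\ge 0$ and $\gamma>0$. Schauder theory then promotes the resulting solution to $c_\kappa \in C^{2+\beta_*}(\Ombar)$ with $0\le c_\kappa\le \gamma$. Applying the strong maximum principle and Hopf's lemma first to $c_\kappa$ (viewed as a non-negative solution of $-\Delta c + (\kappa e^c) c = 0$) and then to $\gamma - c_\kappa$ sharpens the bounds to $c_\kappa(x)\in(0,\gamma)$ for all $x\in\Ombar$. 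Uniqueness at fixed $\kappa$ follows by writing the difference $w$ of two solutions and using the mean-value representation $c_1 e^{c_1} - c_2 e^{c_2} = A w$ with some $A\ge 0$: testing against $w$ gives
\[
 \io |\nabla w|^2 + \int_{\partial\Omega} g w^2 + \kappa \io A w^2 = 0,
\]
which forces $w \equiv 0$ because $g\not\equiv 0$.

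The key remaining step is to prove that $\kappa \mapsto m(\kappa) := \kappa \io e^{c_\kappa}$ is a bijection $(0,\infty)\to (0,\infty)$. A comparison argument gives strict pointwise monotonicity $\kappa_1 < \kappa_2 \Rightarrow c_{\kappa_1} > c_{\kappa_2}$, and Schauder estimates yield smooth dependence on $\kappa$. Passing to the limit in \eqref{scalarprob} shows $c_\kappa \to \gamma$ uniformly as $\kappa\to 0$, whence $m(\kappa)\to 0$; the trivial bound $e^{c_\kappa}\ge 1$ gives $m(\kappa) \ge \kappa|\Omega|\to\infty$ as $\kappa\to\infty$. For strict monotonicity I would differentiate \eqref{scalarprob}: by the implicit function theorem $c':=\partial_\kappa c_\kappa$ satisfies
\[
 -\Delta c' + \kappa(1+c_\kappa)e^{c_\kappa} c' = -c_\kappa e^{c_\kappa} \text{ in } \Omega, \qquad \partial_\nu c' + g c' = 0 \text{ on } \partial\Omega,
\]
and the constant $\gamma/(\kappa(1+\gamma))$ is a supersolution for $-c'$ (because $\tfrac{c_\kappa}{\kappa(1+c_\kappa)}\le \tfrac{\gamma}{\kappa(1+\gamma)}$ throughout $[0,\gamma]$). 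Consequently $-c'\le \gamma/(\kappa(1+\gamma))$ pointwise, so $1+\kappa c'\ge 1/(1+\gamma) > 0$ and therefore $m'(\kappa) = \io e^{c_\kappa}(1 + \kappa c') > 0$.

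I expect the injectivity of $m$ to be the main obstacle: surjectivity comes for free from the intermediate value theorem, but ruling out that the decrease of $c_\kappa$ overcompensates the growth of the prefactor $\kappa$ is delicate, and the pointwise estimate $-c'\le \gamma/(\kappa(1+\gamma))$ is the technical heart of the argument. Non-constancy is then immediate: a constant $c$ combined with $\partial_\nu c = (\gamma-c)g$ and $g\not\equiv 0$ forces $c\equiv\gamma$, contradicting $\Delta c = 0 \neq \kappa\gamma e^\gamma$.
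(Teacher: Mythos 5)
Your proposal is correct, and it follows the same overall skeleton as the paper -- reduce \eqref{system} to the scalar Robin problem via $n=\kappa e^c$, then show that $\kappa\mapsto\kappa\io e^{c_\kappa}$ is a continuous, strictly increasing bijection of $[0,\infty)$ onto itself by proving $1+\kappa\,\partial_\kappa c_\kappa>0$; your bound $-\partial_\kappa c_\kappa\le \gamma/(\kappa(1+\gamma))$ is quantitatively the same as the paper's Lemma \ref{calphaprime-estimate}. The implementations of the individual steps, however, differ in interesting ways. (a) Your reduction via $\widetilde n=ne^{-c}$, $\nabla\cdot(e^c\nabla\widetilde n)=0$, tested against $\widetilde n$ itself, is cleaner than the paper's proof of Lemma \ref{unique-up-to-multiple}, which works with the truncations $\psi_\epsilon=\max\{n/(\epsilon e^c),1\}$ and $\log\psi_\epsilon$ and then argues on connected components of $\{n>0\}$; your version sidesteps the sign discussion entirely because $\widetilde n\in C^1(\Ombar)$ is an admissible test function regardless of where $n$ vanishes. (b) You use sub-/supersolutions $0$ and $\gamma$ where the paper uses Leray--Schauder (Lemma \ref{scalareq-solvable}); both are fine, and your choice makes the bound $0\le c_\kappa\le\gamma$ part of the construction rather than a separate maximum-principle lemma. (c) Uniqueness at fixed $\kappa$ via the energy identity (using $A=(1+\xi)e^\xi>0$) replaces the paper's Hopf-lemma argument on $\{c_{\alpha_1}>c_{\alpha_2}\}$ (Lemma \ref{monotonewrtalpha}). (d) You invoke the implicit function theorem for differentiability in $\kappa$, which is legitimate since the linearization $-\Delta+\kappa(1+c_\kappa)e^{c_\kappa}$ with boundary operator $\delny+g$ is an isomorphism by Lemma \ref{GT-6.31;remarkp.124}(ii); the paper instead controls the difference quotients $w_{\alpha_2,\alpha_1}$ by hand (Lemmas \ref{wtildeboundedbelow}--\ref{differentiable}). (e) For the derivative bound you compare $-\partial_\kappa c_\kappa$ with the constant supersolution $\gamma/(\kappa(1+\gamma))$, whereas the paper perturbs the Robin coefficient to $g+\tfrac1k$ and evaluates at the minimum of the approximants $\zeta_k$; your comparison is shorter, but note that where $g$ vanishes a boundary minimum of $v-\bar v$ must still be excluded via Hopf's lemma (the inequality $\delny(v-\bar v)+g(v-\bar v)\le 0$ alone does not immediately give a sign there), which is exactly the degeneracy the paper's $\tfrac1k$-perturbation is designed to handle. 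The only other caveat is cosmetic: your two energy arguments use Green's identity for functions that are a priori only in $C^2(\Om)\cap C^1(\Ombar)$, which requires the same routine approximation of $\Om$ from inside that underlies the paper's weak formulation \eqref{weakeq}.
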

While the result that the solutions are non-constant is already well in line with the desired outcome, it seems expedient to attempt to gain further insight into the shape of solutions, at least in particular situations.

\begin{theorem}\label{thm:main2}
	Let $\Om=B_R(0)\subset ℝ^N$ be the ball for some $R>0$, let $g>0$ and $γ>0$ be constant and $(n,c)$ the solution of \eqref{system} by Theorem \ref{thm:main}. Then $n$ and $c$ are strictly convex.
\end{theorem}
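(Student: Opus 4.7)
The plan is to reduce the PDE system to a radial ODE by symmetry and uniqueness, and then read off convexity from the integrated form of that ODE. Because $\Om=B_R(0)$, $g$, and $γ$ are all rotationally invariant, composing $(n,c)$ with any orthogonal transformation produces another solution with the same mass $\io n$, and the uniqueness part of Theorem \ref{thm:main} forces the solution to be radial. Testing the first equation of \eqref{system} against $\log n - c$, integrating by parts, and using the boundary condition $\delny n = n\delny c$ to eliminate the boundary term yields $\io n\,|\nabla(\log n - c)|^2 = 0$, whence $n = A e^c$ for some $A>0$. Substituting this into the second equation and writing it in radial coordinates gives
\[
 (r^{N-1} c'(r))' = r^{N-1} A e^{c(r)} c(r),\qquad r\in(0,R),
\]
with $c'(0)=0$ and $c'(R)=(γ-c(R))g$.

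The next step is to extract strict monotonicity of $c$ from this form and then bootstrap to convexity. Integrating the ODE from $0$ to $r$ and using $c > 0$ on $\Ombar$ from Theorem \ref{thm:main} gives
\[
 r^{N-1}c'(r) = \int_0^r s^{N-1} A e^{c(s)} c(s)\,ds > 0\qquad\text{for }r\in(0,R],
\]
so $c'>0$ on $(0,R]$ and $c$ is strictly increasing. This in turn makes $s\mapsto A e^{c(s)} c(s)$ strictly increasing, so the same integral is strictly smaller than $A e^{c(r)} c(r)\cdot \f{r^N}{N}$. Inserting the resulting bound $\f{N-1}{r} c'(r) < \f{N-1}{N} A e^{c(r)} c(r)$ into the pointwise form
\[
 c''(r) = A e^{c(r)} c(r) - \f{N-1}{r}c'(r)
\]
of the ODE, I obtain $c''(r) > \f{1}{N} A e^{c(r)} c(r) > 0$ for $r\in(0,R]$. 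At the origin, l'Hôpital applied to the ODE yields $N c''(0) = A e^{c(0)} c(0) > 0$.

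Finally, for a radial $C^2$-function on $\Om$ the Hessian at any $x\neq 0$ has eigenvalues $c''(|x|)$ (radial direction, simple) and $c'(|x|)/|x|$ (tangentially, $(N-1)$-fold), and at the origin it equals $c''(0)I$; each of these quantities is strictly positive by the preceding step, so $c$ is strictly convex on $\Ombar$. Because $n = A e^c$, one has $\mathrm{Hess}(n) = A e^c\bigl(\nabla c\otimes \nabla c + \mathrm{Hess}(c)\bigr)$, a sum of a positive semidefinite and a positive definite symmetric matrix, so $\mathrm{Hess}(n)$ is positive definite and $n$ is strictly convex as well.

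The main obstacle is the $c''>0$ step when $N\ge 2$: the curvature-reducing term $-\f{N-1}{r}c'$ has the wrong sign, and to beat it one has to gain a definite numerical factor from averaging $A e^c c$ inside the integral representation of $c'$. This works precisely because $c$ is strictly increasing, which itself relies on the strict positivity of $c$ furnished by Theorem \ref{thm:main}.
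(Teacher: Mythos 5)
Your argument is correct, and its overall skeleton (radial symmetry via uniqueness, reduction to $n=\alpha e^c$, the radial ODE, positivity of $\partial_r c$ from the integral representation) matches the paper's Section 5.1; but the convexity step itself is genuinely different. The paper never touches $c''$: it substitutes $t=\rho/r$ to write $\partial_r c(r)=r\int_0^1 t^{N-1}\alpha c(rt)e^{c(rt)}\,dt$, reads off that $\partial_r c$ is strictly increasing, and then converts monotonicity-plus-convexity of the one-dimensional profile into strict convexity of $x\mapsto c(|x|)$ by an elementary triangle-inequality argument (Lemma \ref{lem.convex}), finishing with the observation that $\exp$ is convex and increasing to handle $n$. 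You instead compare $\int_0^r s^{N-1}\alpha c(s)e^{c(s)}\,ds$ with $\alpha c(r)e^{c(r)}r^N/N$ to get the quantitative bound $c''(r)>\tfrac1N\alpha c(r)e^{c(r)}$, and then invoke the spectral decomposition of the Hessian of a radial function ($c''$ radially, $c'/r$ tangentially). Your route buys a uniform lower bound on the Hessian (hence a stronger, quantitative form of strict convexity), at the price of needing the second-derivative/Hessian machinery; the paper's route is more elementary and requires only monotonicity of the first radial derivative. Two small remarks: your derivation of $n=Ae^c$ by testing with $\log n-c$ tacitly uses $n>0$, which is legitimate here since Theorem \ref{thm:main} supplies it (the paper's Lemma \ref{unique-up-to-multiple} avoids this assumption via a truncation), and your claim of strict convexity on $\Ombar$ rather than $\Om$ needs the one-line observation that the open segment between any two distinct points of $\overline{B_R}$ lies in $B_R$, where the Hessian is positive definite.
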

Theorem \ref{thm:main} will be proven at the end of Section \ref{sec:system}. One of the keystones of this proof is the observation that \eqref{system} can be transformed into the scalar problem 
\begin{equation}\label{scalarbvp}
 \begin{cases} Δc=αce^c\qquad &\text{ in } \Om,\\
 \delny c = (  γ-c )g \qquad &\text{ on } \partial \Om,
\end{cases}
\end{equation}
for some parameter $α$, if $n=αe^c$, cf. also \cite[Thm. 2.1]{schaaf} for the classical Keller--Segel system. (The first equation, i.e. the equation this result is concerned with, is identical, although there is a miniscule difference in the boundary conditions also for $n$.) For \eqref{system}, however, it turns out that the dependence between the parameter $α$ and the bacterial mass is bijective and monotone. 
This is both an important difference to signal-production Keller--Segel systems, cf.  \cite{schaaf} and the boundary concentration results,  especially their method of proof, in \cite{delPino_pistoia_vaira}, and not immediately trivial. Indeed, the largest part of the section dealing with the scalar equation (Section \ref{sec:scalarproblem}) will be Section \ref{sec:massmonotone} which will be concerned with the relation between $α$ and the mass $\io n$. Its core idea will be to examine the derivative of solutions $c$ (or rather of $\io αe^c$) with respect to $α$; but some care is necessary to make this idea rigorous. In Section \ref{sec:system} we will use this dependence along with existence results from Section \ref{sec:scalarproblem-ex} to study the full system \eqref{system}. 

In Section \ref{sec.convexity}, we prove Theorem \ref{thm:main2}. The proof rests on symmetry of the solution and classical characterizations of convexity. 
After that, we will further illustrate \eqref{system} by deriving an implicit representation formula for the solution in the one-dimensional setting (Section \ref{sec:one-D}) and, by numerical results in three dimension showing that $n$ and $c$ are convex for $\Omega=B_1(0)$ as stated by Theorem \ref{thm:main2} (Section \ref{sec:numerics}). Moreover, we compare the stationary solution of \eqref{eq:ctconsumption-parabolic} with a stationary of the chemotaxis-Navier-Stokes equations \eqref{eq:ctns}.


However, we begin by recalling some known, but essential prerequisites: 

\section{Preliminaries}

Maximum principle and Hopf's boundary lemma are tools we will invoke often. We use them in the form of \cite[Theorem 3.5 and Lemma 3.4]{GT} -- but do not cite them here. 

The following regularity result will turn out to be useful: 

\begin{lemma}\label{lem-nadirashvili}
 Let $\Om\sub ℝ^{N}$ be a bounded $C^2$-domain. Then there are $C>0$ and $β_0>0$ such that every function $u\in C^2(\Om)\cap C^1(\Ombar)$ satisfies 
\[
 \norm[C^{β_0}(\Ombar)]{u}\le C \kl{\norm[C^0(\Om)]{u} + \norm[\Lp{∞}]{Δu} + \norm[{\Lp[\partial\Om]{∞}}]{\delny u}}.
\]
\end{lemma}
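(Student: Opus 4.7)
The plan is to recognize $u$ as a weak solution of a Neumann problem with $L^\infty$ data and then apply a boundary Hölder regularity theorem for such problems.

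First, integration by parts gives, for every $\varphi\in H^1(\Om)$,
\[
 \io \na u \cdot \na \varphi \, dx = -\io (Δu)\, \varphi \, dx + \int_{\dOm} (\delny u)\, \varphi \, d\sigma,
\]
so that $u$ is a weak solution of a Neumann problem whose interior source $Δu$ lies in $L^\infty(\Om)$ and whose boundary source $\delny u$ lies in $L^\infty(\dOm)$. Because $\Om$ is a $C^{2+β_*}$-domain by \eqref{cond:Om}, the classical De Giorgi--Nash--Moser-type Hölder regularity theorem for weak solutions of Neumann (oblique-derivative) problems with bounded data---which in a closely related form goes back to Nadirashvili---supplies $β_0 \in (0,1)$ and $C>0$ depending only on $\Om$ for which the stated inequality holds.

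If a self-contained argument is preferred, one splits $\Ombar$ into an interior piece and a boundary neighbourhood. On any $\Om' \sub\sub \Om$, standard $W^{2,p}$ theory applied to $Δu \in L^\infty(\Om)$ immediately yields the desired local Hölder bound in terms of $\norm[{\Lp{∞}}]{u}+\norm[{\Lp{∞}}]{Δu}$ and the distance to $\dOm$. Near each $x_0 \in \dOm$ one straightens $\dOm$ via the $C^{2+β_*}$-chart from \eqref{cond:Om} and reflects the transformed $u$ evenly across the resulting flat hyperplane; the reflected function then solves a weak equation whose source is the sum of an $L^\infty$ bulk term and a single-layer distribution supported on $\{x_N=0\}$ of strength bounded by $2\norm[{\Lp[\dOm]{∞}}]{\delny u}$, and a Moser/De Giorgi iteration adapted to such sources gives Hölder regularity at $x_0$ with exponent depending only on $N$ and the chart bounds. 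A finite covering of $\dOm$ combined with the interior estimate then assembles the global bound. The main obstacle is precisely this boundary step: interior Hölder estimates are routine once $Δu$ is bounded, but allowing only $L^\infty$ Neumann data on a curved boundary forces either the appeal to a non-elementary regularity theorem or the straightening-and-reflection argument above with its distributional source term.
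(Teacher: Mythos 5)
Your first paragraph is exactly what the paper does: its entire proof of this lemma is the citation ``This is \cite[Thm.~1.1]{Nadirashvili}'', i.e.\ a direct appeal to Nadirashvili's H\"older estimate for the oblique-derivative problem, which is the theorem you invoke. The additional straightening-and-reflection sketch is a reasonable outline of how one could prove such a bound from scratch, but it is not needed and is not what the paper does; note also that the lemma only assumes a $C^{2}$-boundary, so you should not lean on the $C^{2+\beta_*}$ regularity of \eqref{cond:Om} there.
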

\begin{proof}
 This is \cite[Thm. 1.1]{Nadirashvili}.
\end{proof}

Naturally, large parts of our analysis will be concerned with elliptic equations with Robin boundary conditions. Their solvability is asserted by 
\begin{lemma}\label{GT-6.31;remarkp.124}
 Let $\Om$ be a $C^{2+β}$ domain in $ℝ^N$ and let $a\in C^{β}(\Ombar)$, $a\le 0$, and $b\in C^{1+β}(∂\Om)$, $b\geq0$ such that
 \begin{align*}
 \text{(i)}\quad b > 0 \qquad & \qquad
 \text{or  at least \qquad (ii)}\quad  a\not\equiv 0 \mbox{ or }b\not\equiv 0.
 \end{align*} 
Then for every $f\in C^{β}(\Ombar)$ and $φ\in C^{1+β}(∂\Om)$, the problem 
\[
(Δ+a)u=f\quad \text{ in } \Om,\qquad\quad 
\delny u+bu=φ\quad \text{ on }∂\Om   
\] has a unique $C^{2+β}(\Ombar)$ solution.
\end{lemma}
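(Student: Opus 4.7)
My plan is to establish uniqueness by an energy-type argument, to derive the full regularity from classical Schauder estimates, and to obtain existence via the Fredholm alternative.

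For uniqueness, I would take the difference $w := u_1 - u_2$ of two hypothetical $C^{2+\beta}(\Ombar)$-solutions. Since $w$ solves the homogeneous version of the problem, multiplying the interior equation by $w$, integrating by parts and using $\delny w = -bw$ on $\dOm$ yields
\[
 \io |\nabla w|^2 - \io a\, w^2 + \int_{\dOm} b\, w^2 = 0.
\]
All three terms are nonnegative thanks to $a \le 0$ and $b \ge 0$, so each vanishes. In particular $\nabla w \equiv 0$ and $w$ is constant. In case~(i), the boundary integral with $b > 0$ forces $w \equiv 0$. In case~(ii), $a \not\equiv 0$ gives $\io a < 0$ (so that $w^2 \io a = 0$ forces the constant $w$ to vanish), or analogously $b \not\equiv 0$ gives $\int_{\dOm} b > 0$ and the same conclusion via the boundary term.

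For existence, I would view the problem as a bounded linear operator
\[
 T\colon C^{2+\beta}(\Ombar) \to C^{\beta}(\Ombar) \times C^{1+\beta}(\dOm), \qquad u \mapsto \bigl((\Delta+a)u,\; \delny u + bu\bigr),
\]
and compare $T$ with the reference operator $T_0$ obtained by replacing $b$ with $b+1$. Since $b+1 > 0$, $T_0$ satisfies condition~(i) and is invertible: its $H^1$-weak formulation is coercive (the boundary term $\int_{\dOm}(b+1)u^2 \ge \int_{\dOm}u^2$ dominates a trace seminorm and, together with $\io|\nabla u|^2$, controls the full $H^1$-norm), so Lax--Milgram plus Schauder bootstrapping based on the $C^{2+\beta}$-regularity of $\dOm$ gives a $C^{2+\beta}$-inverse. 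The difference $T - T_0$ is the map $u \mapsto (0, -u|_{\dOm})$, which is compact by virtue of the compact trace embedding $C^{2+\beta}(\Ombar) \hookrightarrow C^{1+\beta}(\dOm)$. Hence $T$ is a compact perturbation of an isomorphism, thus Fredholm of index zero, and the injectivity established in the first step yields surjectivity.

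The main obstacle is the uniqueness step in case~(ii), where $b$ may vanish on large portions of $\dOm$ and $a$ on large portions of $\Om$ simultaneously; the hypotheses $a \not\equiv 0$ or $b \not\equiv 0$ are exploited precisely to eliminate the constant mode from the identity above. An alternative existence route via the method of continuity along $\delny u + (tb + (1-t))u = \varphi$, $t \in [0,1]$, would demand uniform $L^\infty$-bounds independent of $t$, whose derivation near $t = 1$ hinges on exactly the same delicate distinction and is what makes the Fredholm approach more efficient here.
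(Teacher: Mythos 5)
Your argument is essentially sound, but it takes a completely different route from the paper: the paper offers no proof at all, simply citing \cite[Theorem 6.31]{GT} for case (i) and the remark on p.~124 of \cite{GT} for case (ii). What you have written is, in effect, a reconstruction of the argument behind that remark. Your uniqueness step is correct and arguably cleaner than the maximum-principle route one would extract from \cite{GT}: the energy identity $\io|\na w|^2-\io a\,w^2+\int_{\dOm}b\,w^2=0$ forces $w$ to be constant, and either $\io a<0$ or $\int_{\dOm}b>0$ (guaranteed by continuity together with $a\not\equiv0$ or $b\not\equiv0$) kills the constant; this handles the ``or'' in hypothesis (ii) uniformly. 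The Fredholm reduction is also correct: $T-T_0$ is $u\mapsto(0,-u|_{\dOm})$, and the trace map $C^{2+\beta}(\Ombar)\to C^{2+\beta}(\dOm)\hookrightarrow C^{1+\beta}(\dOm)$ is indeed compact, so injectivity of $T$ yields surjectivity once $T_0$ is known to be an isomorphism. The one soft spot is precisely that last ingredient: upgrading the Lax--Milgram $H^1$-solution for $T_0$ to a $C^{2+\beta}(\Ombar)$-solution is not a consequence of the \emph{a priori} estimate of Lemma \ref{GT-6.30} alone; it requires boundary regularity theory for weak solutions of Robin problems (e.g.\ $W^{2,p}$ theory plus Schauder, or the method of continuity starting from an explicitly solvable problem), and this is essentially the content of \cite[Theorem 6.31]{GT} in the coercive case $b+1>0$. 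So your proof is best read as reducing case (ii) to case (i) via the Fredholm alternative, with case (i) still ultimately resting on the classical Schauder existence theory that the paper cites wholesale; what your approach buys is a transparent, self-contained treatment of the degenerate case where $a$ and $b$ may both vanish on large sets.
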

\begin{proof}
While (i) is (a special case of) \cite[Theorem 6.31]{GT}, the more general part (ii) corresponds to the remark after said theorem, \cite[p.124]{GT}. 
\end{proof}

Higher-order Schauder estimates for these equations are also available:
\begin{lemma}\label{GT-6.30}
	Let $\Om$ be a $C^{2+β}$ domain in $ℝ^{N}$, and let $u\in C^{2+β}(\Ombar)$ be a solution in $\Om$ of $-\Delta u+au=f$ satisfying the boundary condition 
	\[
	B(x)u \equiv b(x)u + \delny u  = φ(x), \qquad x\in \partial\Om.
	\]
	It is assumed that  $f\in C^{β}(\Ombar)$, $φ\in C^{1,β}(\Ombar)$, $a\in C^{β}(\Ombar)$ and $b\in C^{1,β}(\Ombar)$ with 
	\[
	\|a\|_{C^{β}(\Ombar)}, \|b\|_{C^{1+β}(\Ombar)}, \|ν\|_{C^{1+β}(\Ombar)}\le \Lambda.
	\]
	Then 
	\[
	\|u\|_{C^{2+β}(\Ombar)}\le C(\|u\|_{C^{0}(\Ombar)}+\|φ\|_{C^{1+β}(\Ombar)}+\|f\|_{C^{β}(\Ombar)}),
	\]
	where $C=C(N,β,\Lambda,\Om)$.
\end{lemma}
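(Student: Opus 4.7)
The statement is the standard global $C^{2+\beta}$ Schauder estimate for an oblique derivative problem on a smooth bounded domain, and the plan is simply to invoke \cite[Theorem 6.30]{GT} after matching notation: the operator $L=-\Delta+a$ and the boundary operator $Bu=bu+\delny u$ satisfy the regularity and sign hypotheses required there (with $b\ge 0$ ensuring the regular oblique direction is the outer normal), and the asserted inequality is literally the estimate produced by that theorem, with the listed dependencies on $N$, $β$, $\Lambda$, and $\Om$.

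If one wanted to reprove the estimate from first principles, the standard route would be a localization argument. First, cover $\Ombar$ by finitely many balls. On balls disjoint from $\dOm$, the classical interior Schauder estimate \cite[Theorem 6.2]{GT} already bounds $\|u\|_{C^{2+β}}$ on a slightly smaller ball in terms of $\|u\|_{C^0}$ and $\|f\|_{C^{β}}$. On balls meeting $\dOm$, use the $C^{2+β}$-regularity of $\dOm$ to flatten the boundary by a local diffeomorphism, which turns the equation on $\Om\cap B$ into an oblique derivative problem on a half-ball whose coefficients retain their Hölder regularity (with norms controlled by $\Lambda$ and the chart). Freezing the coefficients at a boundary point reduces to a constant-coefficient oblique derivative problem for $-\Delta$ on a half-space, where the Schauder estimate is classical, and the Hölder perturbation is absorbed by a standard small-radius argument.

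Combining these local estimates with a subordinate partition of unity produces the global bound, but at this stage one picks up intermediate norms like $\|u\|_{C^{1+β}}$ from commuting cutoffs with $\Delta$; these are absorbed via interpolation between $C^0(\Ombar)$ and $C^{2+β}(\Ombar)$, which is the main technical point of the argument. The constant $C$ inherits its dependence on $\Lambda$ through the coefficient bounds used when freezing and perturbing, and on $\Om$ through the number and geometry of the flattening charts and the corresponding interpolation constants. Since all of this is executed in detail in \cite[Chapter 6]{GT}, a direct citation is the natural and self-contained proof in the present paper.
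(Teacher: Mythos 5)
Your proposal is correct and matches the paper exactly: the paper's entire proof is the citation ``This is part of \cite[Theorem 6.30]{GT}'', and your localization sketch is just the standard argument behind that theorem. (One minor note: no sign condition on $b$ is needed for the \emph{a priori} estimate of Theorem 6.30 --- the obliqueness is automatic since the boundary operator involves $\partial_\nu$ itself --- so your parenthetical about $b\ge 0$ is unnecessary, though harmless.)
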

\begin{proof}
This is part of \cite[Theorem 6.30]{GT}.
\end{proof}

\section{The scalar equation}\label{sec:scalarproblem}
\subsection{\textit{A priori} estimates and solvability}\label{sec:scalarproblem-ex}

We will first deal with the single scalar equation that will turn out to be equivalent to the system we are interested in (see Lemma \ref{unique-up-to-multiple}). The first objective is its solvability, to be proven based on a Schauder fixed-point argument, which we prepare by providing \textit{a priori} estimates for solutions to  

\begin{equation}\label{bvp-forSchauder}
\begin{cases}
 Δc = αce^{\ctilde}\qquad& \text{ in }\Om,\\
 \delny c= (γ-c)g\qquad& \text{ on }\partial\Om.
\end{cases}
\end{equation}

Of course, facts derived for this system also apply to \eqref{scalarbvp} if we just insert $\ctilde=c$. 

\begin{lemma}\label{notconstant}
 Let $α\ge 0$, $γ\ge 0$ and let $\Om$ be as in \eqref{cond:Om} and $g$ as in \eqref{condition:g1}.
 If a function $c\in C^2(\Om)\cap C^1(\Ombar)$ solves \eqref{bvp-forSchauder} for some $\ctilde\in C^{β}(\Ombar)$, $β\in(0,1)$, then $c$ is not constant, unless $αγ=0$ and $c\equiv γ$.
\end{lemma}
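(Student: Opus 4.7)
My plan is to argue by contradiction: assume $c$ is constant, $c\equiv k$ for some $k\in\mathbb{R}$, and extract from the PDE and the boundary condition enough information to force $\alpha\gamma=0$ together with $k=\gamma$. This reduces everything to a short case distinction.

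First, I would plug $c\equiv k$ into the interior equation. Since $\Delta c\equiv 0$, the equation $\Delta c=\alpha c e^{\widetilde c}$ yields $\alpha k e^{\widetilde c}\equiv 0$ in $\Omega$. Because $e^{\widetilde c}>0$ pointwise, this forces $\alpha k=0$, i.e. either $\alpha=0$ or $k=0$. Next I would use the boundary condition: $\partial_\nu c\equiv 0$ because $c$ is constant, so $(\gamma-k)g\equiv 0$ on $\partial\Omega$. Since $g\ge 0$ and $g\not\equiv 0$ on $\partial\Omega$ by \eqref{condition:g1}, there exists a point $x_0\in\partial\Omega$ with $g(x_0)>0$; evaluating at $x_0$ gives $k=\gamma$.

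Combining the two conclusions, I would distinguish the two cases from $\alpha k=0$. If $\alpha=0$, then together with $k=\gamma$ I get $\alpha\gamma=0$ and $c\equiv\gamma$, which is precisely the stated exceptional case. If instead $k=0$, then $k=\gamma$ forces $\gamma=0$, so again $\alpha\gamma=0$ and $c\equiv 0=\gamma$. In both subcases the exceptional situation is met, contradicting the assumption that we are outside it, and finishing the argument.

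I do not anticipate any real obstacle; the only point that requires a moment of attention is the use of the boundary condition, where one must invoke the hypothesis $g\not\equiv 0$ on $\partial\Omega$ from \eqref{condition:g1} to conclude $k=\gamma$ rather than merely $(\gamma-k)g\equiv 0$. No smoothness of $\widetilde c$ beyond what is given, and no elliptic regularity, is needed for the statement; the maximum principle or Lemma \ref{lem-nadirashvili} are not invoked here.
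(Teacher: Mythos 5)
Your proposal is correct and follows essentially the same route as the paper: both arguments combine the observation that the boundary condition together with $g\not\equiv 0$ forces a constant solution to equal $\gamma$ with the observation that the interior equation then forces $\alpha\gamma=0$; you merely apply the two steps in the opposite order. No gap.
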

\begin{proof}
 If $c$ were constant, the boundary condition in \eqref{bvp-forSchauder} would imply \(0=(γ-c)g\) on $\partial\Om$ and, due to $g\not\equiv 0$, hence $c\equiv γ$. But $c\equiv γ$ does not solve $Δc=αce^{\ctilde}$, unless $αγ=0$. 
\end{proof}

\begin{lemma}\label{positivity}
 Let $α\ge 0$, $γ\ge 0$ and $\Om$ and $g$ as in \eqref{cond:Om} and \eqref{condition:g1}.
 If $c\in C^2(\Om)\cap C^1(\Ombar)$ solves \eqref{bvp-forSchauder} for some $\ctilde\in C^{β}(\Ombar)$, $β\in(0,1)$, then $c>0$ in $\Ombar$ or $c\equiv 0$ and $αγ=0$. 
\end{lemma}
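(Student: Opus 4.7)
My plan is to apply the strong maximum principle together with Hopf's boundary point lemma to the function $c$, viewed as solving a linear elliptic equation with non-negative zeroth-order coefficient. Let $V:=\alpha e^{\widetilde c}\ge0$, which is continuous on $\overline\Omega$ since $\widetilde c\in C^{\beta}(\overline\Omega)$. Then \eqref{bvp-forSchauder} reads $\Delta c - Vc = 0$ in $\Omega$, which is precisely the form (\cite[Thm.~3.5]{GT}) to which the strong maximum/minimum principle applies: if $c$ attains a non-positive minimum at some interior point, then $c$ is constant in $\overline\Omega$.

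The main step is then to study the minimum $m:=\min_{\overline\Omega}c$, assuming $m\le0$, and split into two cases according to whether it is attained in $\Omega$ or only on $\partial\Omega$. In the first case, constancy of $c$ combined with the PDE yields $\alpha m\, e^{\widetilde c}\equiv 0$, and the boundary condition gives $(\gamma-m)g\equiv0$ on $\partial\Omega$. Since $g\not\equiv 0$ by \eqref{condition:g1}, this forces $m=\gamma$ at some boundary point, hence (using $m\le 0\le \gamma$) $m=\gamma=0$, so that $c\equiv 0$ and $\alpha\gamma=0$, matching the alternative in the statement. In the second case, assume $c$ is not constant and $c(x_0)=m\le0$ for some $x_0\in\partial\Omega$ with $c>m$ on $\Omega$. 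The interior ball condition holds thanks to \eqref{cond:Om}, so Hopf's lemma (\cite[Lem.~3.4]{GT}) yields $\partial_\nu c(x_0)<0$; on the other hand, the boundary condition gives
\[
\partial_\nu c(x_0)=(\gamma-m)g(x_0)\ge0,
\]
because $\gamma-m\ge\gamma\ge 0$ and $g\ge0$ on $\partial\Omega$. This contradiction rules out the second case, so under $m\le 0$ only $c\equiv0$ (with $\alpha\gamma=0$) is possible.

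If instead $m>0$, then $c>0$ throughout $\overline\Omega$, which is the first alternative. Hence the two alternatives of the statement cover all possibilities.

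I do not anticipate a serious obstacle: the two delicate points are merely (i) to rewrite the equation in the form $\Delta c-Vc=0$ with $V\ge 0$ so that the strong maximum principle directly applies to (non-positive) minima of $c$, and (ii) to verify that the sign of $(\gamma-m)g(x_0)$ on the boundary really is non-negative, which relies crucially on $\gamma\ge 0$, $g\ge 0$ and $g\not\equiv 0$ as assumed in \eqref{condition:g1}. No regularity beyond $c\in C^{2}(\Omega)\cap C^{1}(\overline\Omega)$ is needed for this argument.
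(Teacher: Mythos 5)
Your proposal is correct and follows essentially the same route as the paper: rewrite the equation as $\Delta c - Vc=0$ with $V=\alpha e^{\widetilde c}\ge 0$, apply the strong maximum principle (the paper phrases it for $-c$ and its non-negative maximum, you for $c$ and its non-positive minimum), and rule out a non-positive boundary minimum via Hopf's lemma combined with the sign of $(\gamma-c)g$. The only cosmetic difference is that the paper dispatches the constant case by citing its Lemma \ref{notconstant}, whereas you re-derive that fact inline.
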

\begin{proof}
 The function $(-c)$, being a solution to $L(-c)=0$ for $L=Δ-αe^{\ctilde}$, is either constant (which by Lemma \ref{notconstant} results in $c\equiv γ>0$ or $c\equiv γ$ and $αγ=0$) or cannot achieve a non-negative maximum in $\Om$ \cite[Thm. 3.5]{GT}, which would entail existence of $x_0\in \partial\Om$ such that $(-c)(x_0)>(-c)(x)$ for all $x\in \Om$. If then $-c(x_0)$ were non-negative, by Hopf's boundary lemma \cite[L. 3.4]{GT} we could conclude $\delny(-c)(x_0)>0$. The boundary condition in \eqref{scalarbvp} and non-negativity of $g$ would turn this into 
\[
 0 < (c(x_0)-γ)g(x_0) \le (0-γ)g(x_0)=-γ g(x_0) \le 0,
\]
 which is contradictory and hence proves that $\max_{\Ombar} (-c)$ is negative.
\end{proof}

\begin{lemma}\label{c-between-zero-and-gamma}
 Let $α\ge 0$, $γ\ge 0$ and  let $\Om$ be as in \eqref{cond:Om} and $g$ as in \eqref{condition:g1}. If $c\in C^2(\Om)\cap C^1(\Ombar)$ solves \eqref{bvp-forSchauder} for some $\ctilde\in C^{β}(\Ombar)$, $β\in(0,1)$, then $0<c(x)<γ$ for all $x\in \Ombar$ or $c\equiv γ$ and $αγ=0$.
\end{lemma}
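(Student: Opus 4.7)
The lower bound $c > 0$ is already supplied by Lemma \ref{positivity}, so what remains is the strict upper bound $c < γ$ on $\Ombar$ (or the identification of the degenerate equality case). The plan is to mirror the argument of Lemma \ref{positivity}, but with $w := c - γ$ in place of $-c$. With the linear operator $L := Δ - αe^{\ctilde}$, whose zeroth-order coefficient $-αe^{\ctilde}$ is non-positive, a direct computation gives
\[
Lw \;=\; Δc - αe^{\ctilde}(c-γ) \;=\; αγe^{\ctilde} \;\ge\; 0,
\]
so both the strong maximum principle and Hopf's boundary lemma from \cite{GT} are at my disposal.

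Suppose towards contradiction that $w$ attains a non-negative maximum on $\Ombar$. Either $w$ is constant -- in which case $c$ is constant and Lemma \ref{notconstant} immediately forces the alternative $c \equiv γ$ with $αγ=0$ -- or this maximum is attained only at some $x_0 \in \dOm$, with $w(x) < w(x_0)$ for every $x \in \Om$. In the latter case, Hopf's lemma yields $\delny w(x_0) > 0$; however, the Robin boundary condition rewrites as
\[
\delny w(x_0) \;=\; \delny c(x_0) \;=\; -w(x_0)\,g(x_0) \;\le\; 0,
\]
using $w(x_0) \ge 0$ and $g \ge 0$, a contradiction. Hence $\max_{\Ombar} w < 0$, i.e.\ $c < γ$ throughout $\Ombar$, which together with Lemma \ref{positivity} closes the proof.

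It remains to absorb the degenerate branch of Lemma \ref{positivity}: if $c \equiv 0$ with $αγ = 0$, then the boundary condition $0 = \delny c = γg$ combined with $g \not\equiv 0$ on $\dOm$ forces $γ = 0$, so $c \equiv γ$ holds anyway, matching the stated alternative. The only subtle point I anticipate, though mild, is that even when the putative boundary maximum value is $w(x_0) = 0$, the Robin identity gives only $\delny w(x_0) = 0$ instead of a strictly negative normal derivative; the contradiction therefore hinges entirely on the \emph{strict} inequality furnished by Hopf, and no pointwise assumption like $g(x_0) > 0$ is required.
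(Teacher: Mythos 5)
Your proof is correct and takes essentially the same route as the paper's: both combine the strong maximum principle and Hopf's boundary lemma at a boundary maximum, dispatch the constant case via Lemma \ref{notconstant}, and import the lower bound from Lemma \ref{positivity}. The only (immaterial) difference is that you shift to $w=c-\gamma$ and argue by contradiction from $Lw=\alpha\gamma e^{\ctilde}\ge 0$, whereas the paper applies the same tools directly to $c$ with $Lc=0$ and reads off $c(x_0)<\gamma$ from $(\gamma-c(x_0))g(x_0)>0$.
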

\begin{proof}
 Applying the strong maximum principle \cite[Thm. 3.5]{GT} to the uniformly elliptic operator $L:=Δ-αe^{\ctilde}$, we can conclude that either $c$ is constant (and hence, by Lemma \ref{notconstant}, $c\equiv γ$ and $αγ=0$), or $c$ cannot achieve its (according to Lemma \ref{positivity}, necessarily nonnegative) maximum in the interior of $\Om$ so that there must be $x_0\in \partial\Om$ satisfying $c(x_0)>c(x)\ge 0$ for all $x\in \Om$. By Hopf's boundary point lemma \cite[L. 3.4]{GT}, hence $\delny c(x_0)>0$, which in light of the boundary condition in \eqref{scalarbvp} entails that $ (\gamma-c(x_0)) g(x_0) >0$ and therefore $c(x_0)<γ$. The lower bound has been proven in Lemma \ref{positivity}. 
\end{proof}

\begin{lemma}\label{hoelderestimate}
 Let $\Om$ satisfy \eqref{cond:Om}. For every $β\in(0,\min\set{β_0,β_*})$, with $β_0$ from Lemma \ref{lem-nadirashvili}, 
there is $K>0$ such that for any $α\ge 0$ and $γ\ge 0$ and any $g$ as in \eqref{condition:g1}, every solution $c\in C^2(\Om)\cap C^1(\Ombar)$ of \eqref{scalarbvp} satisfies 
\[
 \norm[C^{β}(\Ombar)]{c} \le K \kl{ γ+αγe^{γ}+γ\norm[{\Lp[\partial\Om]{∞}}]{g} }.
\]
\end{lemma}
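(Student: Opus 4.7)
The plan is to combine the pointwise bound $0\le c\le \gamma$ from Lemma \ref{c-between-zero-and-gamma} with the Hölder estimate from Lemma \ref{lem-nadirashvili}, and then cheaply pass from the exponent $\beta_0$ to any smaller $\beta$.

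First, I would rule out the degenerate case: if $c\equiv\gamma$ (which by Lemma \ref{c-between-zero-and-gamma} forces $\alpha\gamma=0$), then $\|c\|_{C^\beta(\overline{\Omega})}=\gamma$ and the claimed inequality is obvious for any $K\ge 1$. Assume henceforth that $0<c<\gamma$ pointwise in $\overline{\Omega}$.

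Next, I would estimate the three quantities appearing in Lemma \ref{lem-nadirashvili} separately:
\begin{itemize}
\item $\|c\|_{C^0(\Omega)}\le \gamma$ directly from Lemma \ref{c-between-zero-and-gamma};
\item $\|\Delta c\|_{L^\infty(\Omega)}=\|\alpha c e^{c}\|_{L^\infty(\Omega)}\le \alpha\gamma e^{\gamma}$, since $0\le c\le \gamma$ and the function $s\mapsto se^s$ is increasing on $[0,\gamma]$;
\item $\|\partial_{\nu}c\|_{L^\infty(\partial\Omega)}=\|(\gamma-c)g\|_{L^\infty(\partial\Omega)}\le \gamma\|g\|_{L^\infty(\partial\Omega)}$, using $0\le \gamma-c\le \gamma$ on $\partial\Omega$.
\end{itemize}
Plugging these into Lemma \ref{lem-nadirashvili} yields a constant $C=C(\Omega)>0$ (independent of $\alpha$, $\gamma$, $g$, $c$) such that
\[
\|c\|_{C^{\beta_0}(\overline{\Omega})}\le C\bigl(\gamma+\alpha\gamma e^{\gamma}+\gamma\|g\|_{L^\infty(\partial\Omega)}\bigr).
\]

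Finally, for any $\beta\in(0,\beta_0)$, a standard interpolation on the bounded set $\overline{\Omega}$ gives $\|c\|_{C^\beta(\overline{\Omega})}\le C'(\Omega,\beta)\,\|c\|_{C^{\beta_0}(\overline{\Omega})}$ (one simply uses that the Hölder seminorm of exponent $\beta$ is controlled by $\|c\|_{C^0}^{1-\beta/\beta_0}\,[c]_{C^{\beta_0}}^{\beta/\beta_0}$, together with the already established $C^0$-bound). Setting $K:=C'C$ produces the desired estimate. The restriction $\beta<\beta_*$ plays no role here; it is harmless and likely kept only for consistency with later applications where the domain regularity is invoked.

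There really is no hard step: the argument is an immediate combination of a pointwise maximum principle bound and a global Hölder estimate for generic $C^2\cap C^1$ functions. The only thing to be slightly careful about is that the estimate must be uniform in $\alpha$, $\gamma$, and $g$, which is why we track the constants explicitly and note that Nadirashvili's constant in Lemma \ref{lem-nadirashvili} depends only on $\Omega$.
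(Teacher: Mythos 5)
Your proof is correct and follows essentially the same route as the paper: both derive $0\le c\le\gamma$ from Lemma \ref{c-between-zero-and-gamma}, bound $\Delta c$ and $\partial_\nu c$ pointwise by $\alpha\gamma e^{\gamma}$ and $\gamma\|g\|_{L^\infty(\partial\Omega)}$ respectively, and conclude via Lemma \ref{lem-nadirashvili}. You merely spell out the routine passage from the exponent $\beta_0$ to a smaller $\beta$, which the paper leaves implicit.
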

\begin{proof}
 According to Lemma \ref{c-between-zero-and-gamma}, applied with $\ctilde=c$, we have $0\le c\le γ$, hence $Δc(x)=αc(x)e^{c(x)}\in [0,αγe^{γ}]$ for all $x\in \Om$. Moreover, $\delny c(x)=g(x) (γ-c(x))\in [0, γg(x)]$ for all $x\in \partial\Om$. An application of Lemma \ref{lem-nadirashvili} immediately concludes the proof. 
\end{proof}

\begin{lemma}\label{solvability-schauderbvp-and-c2betaestimate}
 Let $\Om$ satisfy \eqref{cond:Om} and $β\in(0,β_*)$. For every $α\ge 0$, $γ\ge 0$, $g$ as in \eqref{condition:g1} and $\ctilde\in C^{β}(\Ombar)$, the boundary value problem \eqref{bvp-forSchauder} has a unique solution $c\in C^{2+β}(\Ombar)$.\\
Moreover, for every $K>0$ there is $C>0$ such that for every $α\in[0,K]$, $γ\in[0,K]$, $g$ as in \eqref{condition:g1} with $\norm[C^{1+β}(\Ombar)]{g}\le K$ and $\ctilde\in C^{β}(\Ombar)$ with $\norm[C^{β}(\Ombar)]{\ctilde}\le K$ we have that the solution $c$ of \eqref{bvp-forSchauder} satisfies 
\[
 \norm[C^{2+β}(\Ombar)]{c} \le C.
\]
\end{lemma}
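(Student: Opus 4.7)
The plan is to rewrite \eqref{bvp-forSchauder} in the form covered by Lemma \ref{GT-6.31;remarkp.124} and then conclude the uniform estimate by invoking Lemma \ref{GT-6.30} together with Lemma \ref{c-between-zero-and-gamma}. Specifically, setting $a:=-αe^{\ctilde}$, $f:=0$, $b:=g$ and $φ:=γg$, the problem \eqref{bvp-forSchauder} becomes $(Δ+a)c=f$ in $\Om$ with $\delny c+bc=φ$ on $\partial\Om$. Since $\ctilde\in C^{β}(\Ombar)$ and the exponential is smooth, we have $a\in C^{β}(\Ombar)$ with $a\le 0$; by \eqref{condition:g1} we have $b=g\in C^{1+β}(\Ombar)$, $b\ge 0$. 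Moreover $b=g\not\equiv 0$ on $\partial\Om$, so the alternative (ii) in Lemma \ref{GT-6.31;remarkp.124} is always satisfied, irrespective of whether $α=0$ or $α>0$; also $f\in C^{β}(\Ombar)$ and $φ\in C^{1+β}(\Ombar)$ trivially. Lemma \ref{GT-6.31;remarkp.124} therefore produces a unique $c\in C^{2+β}(\Ombar)$ solving \eqref{bvp-forSchauder}.

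For the quantitative statement, suppose $α,γ\in[0,K]$, $\norm[C^{1+β}(\Ombar)]{g}\le K$ and $\norm[C^{β}(\Ombar)]{\ctilde}\le K$. First, Lemma \ref{c-between-zero-and-gamma} (applied with the given $\ctilde$) yields $0\le c\le γ\le K$, hence $\norm[C^{0}(\Ombar)]{c}\le K$. Second, from $φ=γg$ we have $\norm[C^{1+β}(\Ombar)]{φ}\le K^{2}$, and $\norm[C^{β}(\Ombar)]{f}=0$. Third, one needs to control the coefficients for Lemma \ref{GT-6.30}: clearly $\norm[C^{1+β}(\Ombar)]{b}=\norm[C^{1+β}(\Ombar)]{g}\le K$, while $\norm[C^{β}(\Ombar)]{ν}$ is bounded by a constant depending only on $\Om$ in view of \eqref{cond:Om}; it remains to estimate $\norm[C^{β}(\Ombar)]{a}=α\norm[C^{β}(\Ombar)]{e^{\ctilde}}$. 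Since $\norm[\Lp[\Om]{∞}]{\ctilde}\le K$, the mean value theorem applied to the exponential gives
\[
 \norm[C^{β}(\Ombar)]{e^{\ctilde}}\le e^{K}\bigl(1+\norm[C^{β}(\Ombar)]{\ctilde}\bigr)\le e^{K}(1+K),
\]
so $\norm[C^{β}(\Ombar)]{a}\le Ke^{K}(1+K)$, which is a quantity depending only on $K$.

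Hence all constants appearing in Lemma \ref{GT-6.30} are bounded by a number $\Lambda=\Lambda(K,\Om)$, and that lemma yields
\[
 \norm[C^{2+β}(\Ombar)]{c}\le C\bigl(\norm[C^{0}(\Ombar)]{c}+\norm[C^{1+β}(\Ombar)]{φ}+\norm[C^{β}(\Ombar)]{f}\bigr)\le C(K+K^{2})
\]
with $C=C(N,β,\Lambda,\Om)$, giving the asserted uniform bound. I do not foresee any serious obstacle: the existence/uniqueness is an almost immediate quotation of Lemma \ref{GT-6.31;remarkp.124} once the verification that alternative (ii) is in force (this is precisely where the hypothesis $g\not\equiv 0$ of \eqref{condition:g1} enters and handles the degenerate case $α=0$), and the only non-obvious point in the estimate is the Hölder bound on $e^{\ctilde}$, which is elementary.
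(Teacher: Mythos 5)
Your proposal is correct and follows essentially the same route as the paper: Lemma \ref{GT-6.31;remarkp.124} (alternative (ii), via $g\not\equiv 0$) for existence and uniqueness, Lemma \ref{GT-6.30} for the Schauder estimate, and Lemma \ref{c-between-zero-and-gamma} for the $C^0$ bound. You merely spell out the verification of the coefficient bounds (in particular the Hölder estimate for $e^{\ctilde}$) that the paper leaves implicit; all of these checks are accurate.
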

\begin{proof}
 Existence and uniqueness of a solution are asserted by Lemma \ref{GT-6.31;remarkp.124}. 
The Schauder estimate of Lemma \ref{GT-6.30} 
 enables us to find a constant $C_0=C_0(K)>0$ such that any solution $c$ of \eqref{bvp-forSchauder} satisfies $\norm[C^{2+β}(\Ombar)]{c} \le C_0 (\norm[C^0(\Ombar)]{c}+\norm[C^{1+β}(\Ombar)]{γg})$. An application of Lemma \ref{c-between-zero-and-gamma} and the assumptions on $γ$ and $g$ turn this into the desired estimate. 
\end{proof}

\begin{lemma}\label{scalareq-solvable}
 Let $\Om$ be as in \eqref{cond:Om}, $α\ge 0$, $γ\ge 0$, $g$ as in \eqref{condition:g1} and $β\in(0,\min\set{β_*,β_0}]$ with $β_0$ as in Lemma \ref{lem-nadirashvili}. 
 Then \eqref{scalarbvp} has a solution $c\in C^{2+β}(\Ombar)$. 

Moreover, for every $K>0$ there is $C>0$ such that for every $α\in[0,K]$, $γ\in[0,K]$, $g$ as in \eqref{condition:g1} with $\norm[C^{1+β}(\Ombar)]{g}\le K$ we have that every solution $c$ of \eqref{scalarbvp} satisfies 
\begin{align} \label{c2betaestimate}
 \norm[C^{2+β}(\Ombar)]{c} \le C.
\end{align}
\end{lemma}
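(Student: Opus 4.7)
The plan is to obtain a solution of \eqref{scalarbvp} by applying Schauder's fixed point theorem to the solution operator of the linearisation \eqref{bvp-forSchauder}. Concretely, I define
\[
 T\colon C^{β}(\Ombar) \to C^{β}(\Ombar),\qquad T(\ctilde) := c,
\]
where $c\in C^{2+β}(\Ombar)$ is the unique solution of \eqref{bvp-forSchauder} provided by Lemma \ref{solvability-schauderbvp-and-c2betaestimate}; any fixed point of $T$ is by construction a solution of \eqref{scalarbvp}.

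For the invariant set I take
\[
 M := \set{\ctilde \in C^{β}(\Ombar) : 0 \le \ctilde \le γ \text{ on } \Ombar \text{ and } \norm[C^{β}(\Ombar)]{\ctilde}\le K'}
\]
with $K'$ to be chosen. For $\ctilde\in M$, Lemma \ref{c-between-zero-and-gamma} yields $0\le c\le γ$, hence $0\le Δc = αce^{\ctilde} \le αγe^{γ}$ in $\Om$, and $0\le \delny c \le γ\norm[\Lp[\partial\Om]{∞}]{g}$ on $∂\Om$. Lemma \ref{lem-nadirashvili} therefore furnishes a bound on $\norm[C^{β}(\Ombar)]{c}$ depending only on $α,γ,\norm{g}$ and $\Om$, and choosing $K'$ to be that bound makes $M$ closed, convex and satisfies $T(M)\sub M$.

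Compactness of $T\colon M\to M$ follows from Lemma \ref{solvability-schauderbvp-and-c2betaestimate}, which shows $T(M)$ to be bounded in $C^{2+β}(\Ombar)$, together with the compact embedding $C^{2+β}(\Ombar)\hookrightarrow C^{β}(\Ombar)$. Continuity I would argue via uniqueness: if $\ctilde_k \to \ctilde$ in $C^{β}(\Ombar)$, then $\set{T(\ctilde_k)}$ is precompact in $C^{β}$, and any limit point $c^*$ passes to the limit in \eqref{bvp-forSchauder} to satisfy $Δc^* = αc^* e^{\ctilde}$ with $\delny c^* = (γ-c^*)g$; uniqueness in Lemma \ref{solvability-schauderbvp-and-c2betaestimate} forces $c^* = T(\ctilde)$, so the whole sequence converges. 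Schauder's fixed point theorem then supplies a $c\in M$ solving \eqref{scalarbvp}, and $c\in C^{2+β}(\Ombar)$ since $T$ takes values there.

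For the \emph{a~priori} bound \eqref{c2betaestimate}, I first apply Lemma \ref{hoelderestimate} to any solution $c$ of \eqref{scalarbvp}, which bounds $\norm[C^{β}(\Ombar)]{c}$ in terms of $K$, and then feed $\ctilde:=c$ into the second part of Lemma \ref{solvability-schauderbvp-and-c2betaestimate} to upgrade this to the desired $C^{2+β}$ estimate. The main obstacle is the self-mapping property of $T$: constraining $\ctilde$ pointwise between $0$ and $γ$ is essential, since otherwise the exponential factor $e^{\ctilde}$ is uncontrolled and no uniform $C^{β}$-bound on $c$ could be derived. Once this pointwise barrier is installed, everything else is a straightforward combination of the maximum-principle results of Section \ref{sec:scalarproblem-ex} with the Schauder theory from the preliminaries.
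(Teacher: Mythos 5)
Your proof is correct, and it follows the same overall strategy as the paper: both treat \eqref{scalarbvp} as a fixed-point problem for the solution operator of the auxiliary problem \eqref{bvp-forSchauder}, both obtain compactness from the $C^{2+\beta}$ bound of Lemma \ref{solvability-schauderbvp-and-c2betaestimate}, and both prove \eqref{c2betaestimate} identically (Lemma \ref{hoelderestimate} followed by the second part of Lemma \ref{solvability-schauderbvp-and-c2betaestimate} with $\ctilde=c$). The one genuine difference is the choice of fixed-point theorem. The paper uses Leray--Schauder: instead of exhibiting an invariant set, it establishes the uniform bound on all solutions of $c=\sigma\Phi(c)$, $\sigma\in[0,1]$, by observing that such $c$ solves the problem with boundary datum $\sigma\gamma$ and then invoking Lemma \ref{c-between-zero-and-gamma} and Lemma \ref{lem-nadirashvili}. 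You use Schauder on the explicit closed convex set $M$ cut out by the pointwise barrier $0\le\ctilde\le\gamma$ and the Nadirashvili $C^\beta$ bound; the invariance $T(M)\subset M$ is exactly the same maximum-principle information, just packaged differently. Your route has the small advantage of making the continuity of $T$ explicit (the paper subsumes it under the word ``compact''), at the price of having to check invariance of $M$; the paper's route avoids constructing $M$ at the price of the $\sigma$-homotopy computation. One cosmetic remark: to pass to the limit in \eqref{bvp-forSchauder} in your continuity argument you should extract a subsequence converging in $C^2(\Ombar)$ (available from the uniform $C^{2+\beta}$ bound via Arzel\`a--Ascoli), since convergence merely in $C^\beta$ does not directly control $\Delta c^*$ pointwise; this is a standard detail and not a gap in the argument.
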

\begin{proof}
 We let $Φ\colon C^{β}(\Ombar)\to C^{β}(\Ombar)$ be the function that maps $\ctilde\in C^{β}(\Ombar)$ to the solution $c$ of \eqref{bvp-forSchauder}. By Lemma \ref{solvability-schauderbvp-and-c2betaestimate}, this function is well-defined and, moreover, 
 compact. In order to prepare an application of the Leray--Schauder fixed point theorem, we assume that $σ\in[0,1]$ and $c\in C^{β}(\Ombar)$ are such that $c=σΦ(c)$. Then $Δc=σΔΦ(c)=ασΦ(c)e^c=αce^c$ in $\Om$ and $\delny c = σ\delny Φ(c) = σ(γ-Φ(c))g =  (σγ-c)g$ on $\partial\Om$. \\
 According to Lemma \ref{c-between-zero-and-gamma}, $c$ thus satisfies $0\le c\le σγ$ in $\Ombar$. 
 With $C$ from Lemma \ref{lem-nadirashvili}, we hence obtain that 
\begin{align}
 \norm[C^{β}(\Ombar)]{c} &\le C\kl{ \norm[C^0(\Ombar)]{c} + \norm[\Lp{∞}]{αce^c} +\norm[{\Lp[\partial\Om]{∞}}]{ (σγ-c)g} } \nn \\ 
 &\le C \kl{σγ + ασγe^{σγ} + σγ \norm[{\Lp[\partial\Om]{∞}}]{g}} \le Cγ(1+αe^{γ}+ \norm[{\Lp[\partial\Om]{∞}}]{ g } ).\label{fixedpt-hoelder}
\end{align}
Due to the Leray--Schauder theorem \cite[Thm. 10.3]{GT}, there is a fixed point $c\in C^{β}(\Ombar)$ of $Φ$. 
The $C^{2+β}(\Ombar)$ estimate \eqref{c2betaestimate} results from \eqref{fixedpt-hoelder} and the second part of Lemma \ref{solvability-schauderbvp-and-c2betaestimate}, applied with $\ctilde=c$. 
\end{proof}

\subsection{Dependence on $α$, part I: monotonicity (and uniqueness)}

Having ensured that \eqref{scalarbvp} is solvable for any parameter $α$, we can now turn our attention to the dependence of the solution on this parameter. Apparently, this will provide crucial information for the investigation of uniqueness of the system. We begin by revealing monotonicity of $c$ with respect to $α$: 

\begin{lemma}\label{monotonewrtalpha}
 Let $\Om$ satisfy \eqref{cond:Om}, $γ\ge 0$, let $g$ be as in \eqref{condition:g1}. Assume that $α_1\ge α_2>0$ or $α_1>α_2\ge 0$. If we let $c_{α_1}, c_{α_2} \in C^2(\Om)\cap C^1(\Ombar)$ denote solutions to \eqref{scalarbvp} with $α=α_1$ and $α=α_2$, respectively, then 
\[
 c_{α_1} \le c_{α_2} \qquad \text{ in } \Ombar.
\]
\end{lemma}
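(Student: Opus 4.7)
My plan is to compare $c_{\alpha_1}$ and $c_{\alpha_2}$ through the difference $w := c_{\alpha_1} - c_{\alpha_2}$ by means of the strong maximum principle and Hopf's boundary lemma. Two degenerate cases can be dispatched immediately with Lemma \ref{c-between-zero-and-gamma}: if $\gamma = 0$ then $c_{\alpha_i} \equiv 0$ and there is nothing to prove, while if $\gamma > 0$ and $\alpha_2 = 0$, then $c_{\alpha_2}$ satisfies $\Delta c_{\alpha_2}=0$ with $\partial_\nu c_{\alpha_2}+gc_{\alpha_2}=\gamma g$ and a short energy computation (using $g\ge 0$, $g\not\equiv 0$) shows $c_{\alpha_2}\equiv\gamma$, whereas $c_{\alpha_1}<\gamma$ by Lemma \ref{c-between-zero-and-gamma}.

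In the main case $\gamma>0$, $\alpha_1\ge \alpha_2>0$ we have $0<c_{\alpha_i}<\gamma$ on $\Ombar$. Writing $f(s):=se^s$, whose derivative $f'(s)=(1+s)e^s$ is nonnegative on $[0,\infty)$, I would linearize the nonlinearity via
\[
 \alpha_1 f(c_{\alpha_1}) - \alpha_2 f(c_{\alpha_2}) = \alpha_1 h(x)\, w(x) + (\alpha_1-\alpha_2)\, f(c_{\alpha_2}),
\]
where
\[
 h(x) := \int_0^1 f'\bigl(c_{\alpha_2}(x) + t\, w(x)\bigr)\,dt \ge 0
\]
is continuous and bounded. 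Subtracting the differential equations and boundary conditions for the two solutions, $w$ therefore satisfies
\begin{align*}
 \Delta w - \alpha_1 h w &= (\alpha_1-\alpha_2)\,f(c_{\alpha_2}) \ge 0 &&\text{in }\Om,\\
 \partial_\nu w + g w &= 0 &&\text{on }\partial\Om.
\end{align*}

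Now the operator $L:=\Delta - \alpha_1 h$ has nonpositive zero-order coefficient, so the strong maximum principle applies to the subsolution $w$. Suppose, towards a contradiction, that $M := \max_{\Ombar} w > 0$. An interior maximum would force $w\equiv M$ on $\Om$ by the strong MP, but then $\Delta w\equiv 0$ would have to equal $\alpha_1 h M + (\alpha_1-\alpha_2)f(c_{\alpha_2})$, which is strictly positive on $\Om$ since $c_{\alpha_2},h,M>0$ -- contradiction. Hence $M$ is attained only at some $x_0\in\partial\Om$ with $w<M$ in $\Om$, and Hopf's boundary lemma yields $\partial_\nu w(x_0)>0$. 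The boundary identity, on the other hand, gives $\partial_\nu w(x_0) = -g(x_0)M \le 0$, a contradiction. Therefore $w \le 0$ on $\Ombar$, which is the claim.

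The main step needing care is identifying the correct sign of the right-hand side in the equation for $w$; once the mean-value linearization is arranged so that $(\alpha_1-\alpha_2) f(c_{\alpha_2})\ge 0$ appears as a forcing term with the right sign and the Robin boundary condition for $w$ is homogeneous, everything else follows from standard MP/Hopf arguments. As a pleasant by-product, specializing to $\alpha_1 = \alpha_2$ and using the symmetric role of $c_{\alpha_1}$ and $c_{\alpha_2}$ produces uniqueness of solutions to \eqref{scalarbvp} for each fixed $\alpha>0$, which will be useful in later sections.
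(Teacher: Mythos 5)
Your proof is correct, and it reaches the conclusion by a genuinely different route than the paper. The paper works on the set $\Omtilde=\set{x\in\Om\mid c_{\alpha_1}(x)>c_{\alpha_2}(x)}$: there the strict monotonicity of $\xi\mapsto\xi e^{\xi}$ on $[0,\infty)$ gives $\Delta(c_{\alpha_1}-c_{\alpha_2})>0$ directly (distinguishing $\alpha_2>0$ from $\alpha_1>\alpha_2=0$ in the sign computation rather than treating $\alpha_2=0$ as a separate degenerate case), and the contradiction comes from Hopf's lemma at a maximum point, which must lie on the relatively open piece $\Gamma_1$ of $\partial\Omtilde\cap\dOm$ because the difference vanishes on the remainder of $\partial\Omtilde$. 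Your mean-value linearization instead yields a single linear inequality $\Delta w-\alpha_1 h w\ge 0$ with $h\ge 0$ valid on all of $\Om$ together with the homogeneous Robin condition, so the strong maximum principle and Hopf's lemma are applied on the original smooth domain; the hypotheses check out since the convex combination $c_{\alpha_2}+tw$ stays in $[0,\gamma]$ by Lemma \ref{c-between-zero-and-gamma}, whence $h\ge 1$ and the zeroth-order coefficient $-\alpha_1 h$ is nonpositive and bounded. What each approach buys: yours sidesteps the geometric bookkeeping on $\partial\Omtilde$ (the interior-ball condition at points of $\Gamma_1$, the ``connected without loss of generality'' reduction), at the price of the auxiliary coefficient $h$ and the separate cases $\gamma=0$ and $\alpha_2=0$ (the latter handled correctly via the energy identity, or alternatively via the uniqueness statement of Lemma \ref{GT-6.31;remarkp.124}); it also anticipates the linearized equation \eqref{equationforwalpha} with the function $F$ that the paper derives anyway in Section \ref{sec:massmonotone}, of which your $h$ is the integral form. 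Your closing observation that the case $\alpha_1=\alpha_2$ gives uniqueness is exactly how the paper proves Lemma \ref{uniqueness}.
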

\begin{proof}
 Letting $\ctilde:=c_{α_1}-c_{α_2}$, we define $\Omtilde:=\set{x\in \Om \mid \ctilde(x)>0}$ and note that $\Omtilde$, which can be assumed to be connected without loss of generality, is open. We assume that $\Omtilde$ is non-empty. Letting $Γ_1:=(\partial\Omtilde\cap \partial\Om)^\circ$, with the interior taken with respect to the relative topology of $\partial\Om$, and $Γ_2:=\partial\Omtilde\setminus Γ_1 = \overline{\partial\Omtilde\setminus\partial\Om}$, we see that $\ctilde$ satisfies $\ctilde|_{Γ_2}=0$ and $\delny \ctilde = - g \ctilde$ on $Γ_1$. The normal on $Γ_1$ coincides with the normal of $\partial\Om$. 
 From $α_1\ge α_2$, the monotonicity of $ξ\mapsto ξe^{ξ}$ on $[0,∞)$ and nonnegativity of $c_{α_1}, c_{α_2}$ according to Lemma \ref{c-between-zero-and-gamma}, we conclude that 
\[
 Δ\ctilde = α_1c_{α_1}e^{c_{α_1}} - α_2c_{α_2}e^{c_{α_2}} \ge α_2 ( c_{α_1}e^{c_{α_1}} - c_{α_2}e^{c_{α_2}}) >0 \text{ in }\Omtilde
\]
if $α_2>0$, or, if $α_1>α_2=0$, 
\[
 Δ\ctilde = α_1c_{α_1}e^{c_{α_1}} - α_2c_{α_2}e^{c_{α_2}} > α_2 ( c_{α_1}e^{c_{α_1}} - c_{α_2}e^{c_{α_2}}) = 0 \text{ in }\Omtilde. 
\]
Since strict positivity of $Δ\ctilde$ shows that $\ctilde$ is not constant, the maximum principle \cite[Thm. 3.5]{GT} entails that there is $x_0\in \partial\Omtilde$ such that $\ctilde(x_0)>\ctilde(x)$ for all $x\in \Omtilde$. Necessarily, $x_0\in Γ_1$, because $\ctilde|_{Γ_2}=0$. From Hopf's lemma \cite[L. 3.4]{GT}, 
\[
 0 < \delny \ctilde(x_0) = -g(x_0) \ctilde(x_0),
\]
so that $\ctilde(x_0)<0$, in contradiction to the definition of $\Omtilde$ and continuity of $\ctilde$. Hence $\Omtilde=\emptyset$ and, accordingly, $c_{α_1}\le c_{α_2}$ throughout $\Ombar$. 
\end{proof}

A first, important consequence of this monotonicity is uniqueness of solutions: 

\begin{lemma}\label{uniqueness}
 Let $\Om$ be as in \eqref{cond:Om}, $α> 0$, $γ\ge 0$, $g$ as in \eqref{condition:g1}. Then the solution to \eqref{scalarbvp} is unique in $C^2(\Om)\cap C^1(\Ombar)$. 
\end{lemma}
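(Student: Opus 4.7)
The plan is to deduce uniqueness directly from the monotonicity assertion of Lemma \ref{monotonewrtalpha}. Suppose $c_1, c_2 \in C^2(\Om)\cap C^1(\Ombar)$ both solve \eqref{scalarbvp} with the same parameter $\alpha>0$ (and the same $\gamma \geq 0$ and $g$). The key point is that the hypothesis of Lemma \ref{monotonewrtalpha} reads ``$\alpha_1 \geq \alpha_2 > 0$ or $\alpha_1 > \alpha_2 \geq 0$'', so the case $\alpha_1 = \alpha_2 = \alpha$ is admissible as soon as $\alpha > 0$, which is precisely our situation.

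First, I would apply Lemma \ref{monotonewrtalpha} with $\alpha_1 := \alpha =: \alpha_2$, taking $c_{\alpha_1} := c_1$ and $c_{\alpha_2} := c_2$, to obtain $c_1 \leq c_2$ on $\Ombar$. Swapping the roles (i.e.\ taking $c_{\alpha_1} := c_2$, $c_{\alpha_2} := c_1$) and invoking the lemma a second time yields $c_2 \leq c_1$ on $\Ombar$. The two inequalities together give $c_1 \equiv c_2$, which is the desired uniqueness.

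Before writing this up I would briefly double-check that the proof of Lemma \ref{monotonewrtalpha} really handles the equality case $\alpha_1=\alpha_2=\alpha>0$: on $\widetilde{\Om}=\{c_1-c_2>0\}$ the strict monotonicity of $\xi \mapsto \xi e^\xi$ on $[0,\infty)$ together with $c_1>c_2\geq 0$ (using the nonnegativity supplied by Lemma \ref{c-between-zero-and-gamma}) forces $\Delta(c_1-c_2)=\alpha(c_1 e^{c_1}-c_2 e^{c_2})>0$ there, so the maximum-principle/Hopf argument of the proof applies without alteration. There is no real obstacle; uniqueness is essentially a corollary of monotonicity, the only subtlety being to notice that the monotonicity lemma, as stated, already covers equal positive parameters.
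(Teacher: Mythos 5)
Your proof is correct and coincides with the paper's own argument: the paper's entire proof of this lemma is the single line ``We can apply Lemma \ref{monotonewrtalpha} with $α_1=α=α_2$.'' Your additional check that the strict inequality $\Delta(c_1-c_2)>0$ on $\{c_1>c_2\}$ survives the case $\alpha_1=\alpha_2>0$ is a sensible sanity check but adds nothing beyond what the hypothesis ``$\alpha_1\ge\alpha_2>0$'' of that lemma already covers.
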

\begin{proof} 
We can apply Lemma \ref{monotonewrtalpha} with $α_1=α=α_2$. 
\end{proof}

\subsection{Dependence on $α$, part II: Monotonicity of the mass}\label{sec:massmonotone} 

If we want to conclude uniqueness of solutions to \eqref{system} from unique solvability of \eqref{scalarbvp}, we will be required to have determined $α$ uniquely. (This is a step that does not hold true in the classical Keller--Segel system.) To reach this objective, we will rely on the relation $\io n = \io αe^c$ between the bacterial mass and $α$. In order to prepare the necessary differentiation of $c$, let us  introduce the following auxiliary functions: 

Given $γ> 0$, $\Om$ as in \eqref{cond:Om} and $g$ as in \eqref{condition:g1}, 
for $α_1,α_2\in[0,∞)$ with $α_2\neq α_1$ we define 
\begin{equation}\label{def:w}
 w_{α_2,α_1} := \f{c_{α_2}-c_{α_1}}{α_2-α_1}, 
\end{equation}
where by $c_{α_1}$ and $c_{α_2}$ we denote the solution to \eqref{scalarbvp} with $α=α_1$ or $α=α_2$, respectively.

Moreover, we define 
\begin{equation}\label{def:f1}
 f_{1,α_2} := c_{α_2}e^{c_{α_2}}
\end{equation}
and 
\begin{equation}\label{def:f2}
 f_{2,α_1,α_2} := α_1e^{c_{α_1}}+α_1c_{α_1}e^{c_{α_1}}F(c_{α_2}-c_{α_1}),
\end{equation}
where $F$ is the nonnegative, analytic function defined by 
\begin{equation}\label{def:F}
F(z)=\begin{cases}\f{e^z-1}{z}& \text{for } z\neq 0,\\
 F(0)=1. 
     \end{cases}
\end{equation}
The reason for the above choice of $f_{1,α_2}$ and $f_{2,α_1,α_2}$ should become clear in the following lemma: 

\begin{lemma}
 Let $\Om$ be as in \eqref{cond:Om}, $γ>0$, $g$ as in \eqref{condition:g1} and $α_1,α_2\in [0,\infty)$ with $α_1\neq α_2$. 
 Then the function $w_{α_2,α_1}$ from \eqref{def:w} satisfies 
\begin{equation}\label{equationforwalpha}
\begin{cases}
 Δw_{α_2,α_1} = f_{1,α_1} + f_{2,α_1,α_2} w_{α_2,α_1}\;\;&\text{in } \Om, \qquad\\
 \delny w_{α_2,α_1} = -g w_{α_2,α_1} &\text{ on } \dOm,
\end{cases}
\end{equation}
with $f_{1,α_2}$ as in \eqref{def:f1} and $f_{2,α_1,α_2}$ from \eqref{def:f2}.
\end{lemma}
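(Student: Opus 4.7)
The plan is to derive the equation for $w_{\alpha_2,\alpha_1}$ directly from the PDEs satisfied by $c_{\alpha_1}$ and $c_{\alpha_2}$, by subtracting and dividing by $\alpha_2-\alpha_1$. Both pieces, the interior equation and the boundary condition, reduce to algebraic rewriting; the boundary part is essentially immediate, while the interior part requires a careful use of the function $F$ from \eqref{def:F}.

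For the boundary: Since $\delny c_{\alpha_i}=(\gamma-c_{\alpha_i})g$ on $\dOm$ for $i=1,2$, subtracting these two identities and dividing by $\alpha_2-\alpha_1$ yields
\[
 \delny w_{\alpha_2,\alpha_1} \;=\; \frac{(\gamma-c_{\alpha_2})g-(\gamma-c_{\alpha_1})g}{\alpha_2-\alpha_1}\;=\;-\frac{c_{\alpha_2}-c_{\alpha_1}}{\alpha_2-\alpha_1}g \;=\; -g\,w_{\alpha_2,\alpha_1}\quad\text{on }\dOm,
\]
which is the claimed Robin-type condition. This step needs nothing beyond the linearity of the boundary operator.

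For the interior: starting from $\Delta c_{\alpha_i}=\alpha_i c_{\alpha_i}e^{c_{\alpha_i}}$ ($i=1,2$), subtraction gives
\[
 (\alpha_2-\alpha_1)\,\Delta w_{\alpha_2,\alpha_1} \;=\; \alpha_2 c_{\alpha_2}e^{c_{\alpha_2}}-\alpha_1 c_{\alpha_1}e^{c_{\alpha_1}} \quad\text{in }\Om.
\]
The idea now is to introduce $z:=c_{\alpha_2}-c_{\alpha_1}=(\alpha_2-\alpha_1)w_{\alpha_2,\alpha_1}$ and rewrite $e^{c_{\alpha_2}}=e^{c_{\alpha_1}}e^z$, using the fundamental identity $e^z-1=zF(z)$ built into the definition of $F$. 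Splitting the right-hand side into an $(\alpha_2-\alpha_1)$-part and an $(\alpha_2-\alpha_1)w_{\alpha_2,\alpha_1}$-part, and regrouping with $F(z)$ appearing as the placeholder for the quotient $(e^z-1)/z$ that otherwise threatens to be singular at $z=0$, the combination $\alpha_1 e^{c_{\alpha_1}}+\alpha_1 c_{\alpha_1}e^{c_{\alpha_1}}F(z)$ falls out as the coefficient of $w_{\alpha_2,\alpha_1}$, while $f_{1,\alpha_1}$ emerges as the inhomogeneous term. Finally dividing through by $\alpha_2-\alpha_1$ (which is nonzero by hypothesis) yields \eqref{equationforwalpha}.

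The only nontrivial point is the bookkeeping in this algebraic identity: keeping track of which of the exponentials should be pulled out before invoking $F$, so that neither denominator vanishes even when $c_{\alpha_1}(x)=c_{\alpha_2}(x)$ at some point $x\in\Om$. Using $F$, defined to be analytic and nonnegative with $F(0)=1$, removes exactly this apparent singularity, so that the resulting formula is well-defined pointwise for all $x\in\Ombar$; this is also why $f_{2,\alpha_1,\alpha_2}$ is a genuine $C^{\beta}$ function on $\Ombar$ and the equation in \eqref{equationforwalpha} is a bona fide linear elliptic equation for $w_{\alpha_2,\alpha_1}$, as needed for the subsequent analysis.
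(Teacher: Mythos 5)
Your method is the same as the paper's: subtract the two instances of \eqref{scalarbvp}, divide by $\alpha_2-\alpha_1$, and use $e^z-1=zF(z)$ to render the difference quotient of the exponentials regular at points where $c_{\alpha_1}=c_{\alpha_2}$; the boundary condition is obtained exactly as you say. The gap is in the final bookkeeping, which you assert rather than perform: the regrouping you describe does \emph{not} produce the pair $\bigl(f_{1,\alpha_1},\,\alpha_1e^{c_{\alpha_1}}+\alpha_1c_{\alpha_1}e^{c_{\alpha_1}}F(z)\bigr)$. Writing $u=c_{\alpha_1}$, $v=c_{\alpha_2}$, $z=v-u$, your claimed identity reads
\[
 \alpha_2 ve^{v}-\alpha_1 ue^{u}
 \;=\;(\alpha_2-\alpha_1)\,ue^{u}+\bigl(\alpha_1e^{u}+\alpha_1 ue^{u}F(z)\bigr)z,
\]
whose right-hand side expands to $\alpha_2 ue^{u}-3\alpha_1 ue^{u}+\alpha_1 ve^{u}+\alpha_1 ue^{v}$; this does not equal the left-hand side (try $u=0$, $v=1$: you would need $\alpha_2 e=\alpha_1$). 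The computation that actually closes is
\begin{align*}
 \alpha_2 ve^{v}-\alpha_1 ue^{u}
 &=(\alpha_2-\alpha_1)\,ve^{v}+\alpha_1 e^{v}(v-u)+\alpha_1 ue^{u}\left(e^{v-u}-1\right)\\
 &=(\alpha_2-\alpha_1)\,ve^{v}+\left(\alpha_1e^{v}+\alpha_1 ue^{u}F(z)\right)z,
\end{align*}
so after division by $\alpha_2-\alpha_1$ the inhomogeneity is $c_{\alpha_2}e^{c_{\alpha_2}}=f_{1,\alpha_2}$ and the coefficient of $w_{\alpha_2,\alpha_1}$ is $\alpha_1e^{c_{\alpha_2}}+\alpha_1c_{\alpha_1}e^{c_{\alpha_1}}F(c_{\alpha_2}-c_{\alpha_1})$.

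To be fair, the discrepancy originates in the paper itself: the display \eqref{equationforwalpha} writes $f_{1,\alpha_1}$ while the sentence after it refers to $f_{1,\alpha_2}$, and the first summand of \eqref{def:f2} carries $e^{c_{\alpha_1}}$ where the derivation yields $e^{c_{\alpha_2}}$. You inherited these slips and then claimed the algebra confirms them, which it cannot; a blind proof of this lemma has to actually expand the telescoping sum and will then find the corrected indices. The mismatch is immaterial for the rest of the argument, since Lemma \ref{wtildeboundedbelow} and Lemma \ref{c2hoelderforw} only use $0\le f_{1,\alpha_2}\le\gamma e^{\gamma}$, nonnegativity of $f_{2,\alpha_1,\alpha_2}$, and uniform H\"older bounds, all of which hold verbatim for the corrected coefficients — but as written, the step in which the stated formulas ``fall out'' would fail.
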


\begin{proof}
If we use \eqref{scalarbvp}, we see that in $\Om$ we have 
\begin{align*}
 Δ&(c_{α_2}-c_{α_1})\\
 &= α_2c_{α_2}e^{c_{α_2}}-α_1c_{α_1}e^{c_{α_1}} \\
 &= (α_2-α_1) c_{α_2}e^{c_{α_2}} + α_1(c_{α_2}-c_{α_1})e^{c_{α_2}} + α_1c_{α_1}(e^{c_{α_2}}-e^{c_{α_1}})\\
 &= (α_2-α_1) c_{α_2}e^{c_{α_2}} + α_1(c_{α_2}-c_{α_1})e^{c_{α_2}} + α_1c_{α_1}e^{c_{α_1}} (e^{c_{α_2}-c_{α_1}}-1)\\
 &= (α_2-α_1) c_{α_2}e^{c_{α_2}} + α_1e^{c_{α_2}}(c_{α_2}-c_{α_1}) + α_1c_{α_1}e^{c_{α_1}} F(c_{α_2}-c_{α_1}) (c_{α_2}-c_{α_1}),
\end{align*}
 and division by $α_2-α_1$ together with \eqref{def:f1} and \eqref{def:f2} shows \eqref{equationforwalpha}. Also the boundary condition results from \eqref{scalarbvp} in a straightforward manner. 
\end{proof}

We already know the sign of solutions to \eqref{equationforwalpha}: 
\begin{lemma}\label{w-negative}
 Let $\Om$ be as in \eqref{cond:Om}, $γ>0$, $g$ as in \eqref{condition:g1} and $α_1,α_2\in [0,\infty)$ with $α_1\neq α_2$. 
 Then $w_{α_2,α_1}\le 0$.
\end{lemma}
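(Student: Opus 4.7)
The plan is to recognize that Lemma \ref{w-negative} is essentially just the finite-difference reformulation of the monotonicity already established in Lemma \ref{monotonewrtalpha}. First I observe that $w_{α_2,α_1}$ is symmetric under the exchange $α_1\leftrightarrow α_2$, since then both the numerator $c_{α_2}-c_{α_1}$ and the denominator $α_2-α_1$ flip sign. So without loss of generality I may assume $α_2>α_1\ge 0$.

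Under that normalization I apply Lemma \ref{monotonewrtalpha} with its roles reversed: the hypothesis of Lemma \ref{monotonewrtalpha} (``$α_1\ge α_2>0$ or $α_1>α_2\ge 0$'') is satisfied with the lemma's $α_1$ being our $α_2$ and its $α_2$ being our $α_1$, so it gives $c_{α_2}\le c_{α_1}$ in $\Ombar$. Hence the numerator in \eqref{def:w} is nonpositive while the denominator is strictly positive, and $w_{α_2,α_1}\le 0$ on $\Ombar$ follows immediately.

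There is no real obstacle; the point is simply to record the conclusion in the form $w\le 0$ so that it can be combined with the PDE \eqref{equationforwalpha} in the subsequent arguments on differentiability in $α$. (An alternative route would be to apply the strong maximum principle and Hopf's lemma directly to \eqref{equationforwalpha}, using that $f_{1,α_2}\ge 0$ and $f_{2,α_1,α_2}\ge 0$ thanks to nonnegativity of $c_{α_1}, c_{α_2}$ (Lemma \ref{c-between-zero-and-gamma}) and of $F$, together with the Robin condition $\delny w = -gw$; but invoking Lemma \ref{monotonewrtalpha} is cleaner and avoids having to distinguish the cases $α_1=0$ or points where $g=0$.)
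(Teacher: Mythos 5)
Your proof is correct and follows exactly the paper's route: the paper also derives Lemma \ref{w-negative} as an immediate corollary of Lemma \ref{monotonewrtalpha} via the definition \eqref{def:w}, and your symmetry remark merely spells out the sign bookkeeping that the paper leaves implicit.
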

\begin{proof}
 According to \eqref{def:w}, this is an immediate corollary of Lemma \ref{monotonewrtalpha}.
\end{proof}

An estimate in the other direction is what we obtain next: 

\begin{lemma}\label{wtildeboundedbelow}
 Let $\Om$ satisfy \eqref{cond:Om} and $g$ \eqref{condition:g1}. Let $γ>0$. 
 For any $β\in(0,β_*)$, 
the boundary value problem 
\begin{equation}\label{wtilde}
 Δ\wtilde = γe^{γ}, \qquad \delny \wtilde = -g \wtilde
\end{equation}
 has a unique solution $\wtilde\in C^{2+β}(\Ombar)$, which satisfies that $\wtilde\le w_{α_2,α_1}\le 0$ for any choice of $α_1,α_2\in[0,∞)$ with $α_1\neq α_2$.
\end{lemma}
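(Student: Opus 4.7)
Existence and uniqueness of $\wtilde\in C^{2+β}(\Ombar)$ for \eqref{wtilde} follow from Lemma \ref{GT-6.31;remarkp.124} applied with $a\equiv 0$, $b=g$, $\varphi\equiv 0$ and $f\equiv γe^{γ}$: since $g\not\equiv 0$, we are in case (ii), and all regularity hypotheses are satisfied. The upper estimate $w_{α_2,α_1}\le 0$ is exactly Lemma \ref{w-negative}, so the substantive task is the lower bound $\wtilde\le w_{α_2,α_1}$.

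As a preparatory step I would first verify that $\wtilde\le 0$ throughout $\Ombar$. Since $Δ\wtilde=γe^{γ}>0$, the strong maximum principle rules out an interior maximum of $\wtilde$; at a boundary maximum point $x_0$, Hopf's boundary lemma yields $\delny\wtilde(x_0)>0$. Inserting this into the Robin condition $\delny\wtilde=-g\wtilde$ together with $g\ge 0$ forces $\wtilde(x_0)<0$, so in fact $\wtilde<0$ in $\Ombar$.

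For the comparison itself I would set $v:=w_{α_2,α_1}-\wtilde$ and subtract \eqref{wtilde} from \eqref{equationforwalpha} to obtain
\[
-Δv + f_{2,α_1,α_2}\, v = γe^{γ} - f_{1,α_1} - f_{2,α_1,α_2}\, \wtilde \quad\text{in }\Om, \qquad \delny v = -gv \quad\text{on }\dOm.
\]
The right-hand side is nonnegative: $f_{2,α_1,α_2}\ge 0$, $\wtilde\le 0$, and since $0\le c_{α_1}\le γ$ (Lemma \ref{c-between-zero-and-gamma}) and $ξ\mapsto ξe^{ξ}$ is nondecreasing on $[0,\infty)$, also $f_{1,α_1}\le γe^{γ}$. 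The main obstacle is then to deduce $v\ge 0$ from this sign information. Applied to $-v$, the operator $L:=Δ-f_{2,α_1,α_2}$ has nonpositive zero-order coefficient $-f_{2,α_1,α_2}$, so the strong maximum principle \cite[Thm. 3.5]{GT} precludes a strictly negative interior minimum of $v$: in the generic case $α_1>0$ we moreover have $f_{2,α_1,α_2}\ge α_1 e^{c_{α_1}}>0$, which eliminates the only remaining ``$v$ constant'' alternative, while at a boundary minimum Hopf's lemma combined with $\delny v=-gv$ and $g\ge 0$ again produces a contradiction. The edge case $α_1=0$ is immediate: there $c_{α_1}\equiv γ$ solves \eqref{scalarbvp}, so $f_{1,0}=γe^{γ}$ and $f_{2,0,α_2}=0$, the equation for $w_{α_2,0}$ coincides with \eqref{wtilde}, and uniqueness forces $w_{α_2,0}=\wtilde$.
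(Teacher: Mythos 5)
Your overall strategy is sound and is essentially the paper's: both proofs compare $w_{\alpha_2,\alpha_1}$ with $\wtilde$ through the strong maximum principle in the interior and Hopf's lemma at the boundary. The only structural difference is that you keep $f_{2,\alpha_1,\alpha_2}$ as a zero-order coefficient acting on $v=w_{\alpha_2,\alpha_1}-\wtilde$ (which is why you need the preparatory step $\wtilde\le 0$ to control the source term), whereas the paper writes $\Delta(\wtilde-w)=\gamma e^{\gamma}-f_{1,\alpha_2}-f_{2,\alpha_1,\alpha_2}w\ge 0$ with no zero-order term, using $w\le 0$ in place of $\wtilde\le 0$. Your interior and boundary arguments, including the implicit exclusion of $g(x_0)=0$ at the extremal boundary point via Hopf's strict inequality, are correct.

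The step that fails is your disposal of the case $\alpha_1=0$. The identity $w_{\alpha_2,0}=\wtilde$ is false: the source term in \eqref{equationforwalpha} is really $f_{1,\alpha_2}=c_{\alpha_2}e^{c_{\alpha_2}}$ (the subscript $\alpha_1$ in that display is a typo --- the derivation preceding Lemma \ref{w-negative}, the phrase ``with $f_{1,\alpha_2}$ as in \eqref{def:f1}'' in that lemma's statement, and the paper's own proof of the present lemma all use $c_{\alpha_2}e^{c_{\alpha_2}}$). Hence for $\alpha_1=0$ one has $\Delta w_{\alpha_2,0}=c_{\alpha_2}e^{c_{\alpha_2}}$, which differs from $\gamma e^{\gamma}$ unless $c_{\alpha_2}\equiv\gamma$, i.e.\ unless $\alpha_2=0$; so $w_{\alpha_2,0}$ does not solve \eqref{wtilde} and uniqueness gives you nothing. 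The gap closes easily within your own framework: with $f_{2,0,\alpha_2}\equiv 0$, the ``$v$ constant'' alternative in your maximum-principle argument would force $0=\gamma e^{\gamma}-f_{1,\alpha_2}$, hence $c_{\alpha_2}\equiv\gamma$ and therefore $\alpha_2=0=\alpha_1$, contradicting $\alpha_1\neq\alpha_2$. This is in fact exactly how the paper rules out constancy in all cases at once, without splitting according to the sign of $\alpha_1$.
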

\begin{proof}
 Unique solvability is ensured by Lemma \ref{GT-6.31;remarkp.124}. 
 From Lemma \ref{c-between-zero-and-gamma}, we know that $0\le c_{α_2} \le γ$ and hence $0\le f_{1,α_2}\le γe^{γ}$. 
 Furthermore, $f_{2,α_1,α_2}\ge 0$, so that from non-positivity of $w_{α_2,α_1}$ according to Lemma \ref{w-negative}, we can conclude non-negativity of 
\[
 Δ ( \wtilde-w) = γe^{γ} - f_{1,α_2} - f_{2,α_1,α_2} w. 
\]
 Letting $x_0\in \Ombar$ be such that $(\wtilde-w)$ obtains a maximum at $x_0$, we will derive a contradiction from $(\wtilde - w)(x_0)\ge 0$. We can assume that either $x_0\in \partial \Om$ and $(\wtilde-w)(x_0)>(\wtilde-w)(x)$ for every $x\in \Om$, which according to \cite[L. 3.4]{GT} entails positivity of $\delny(\wtilde-w)(x_0) = -g(x_0) (\wtilde- w)(x_0)$ and hence negativity of $(\wtilde - w)(x_0)$, or that $x_0\in \Om$, which, according to the maximum principle \cite[Thm. 3.5]{GT} is only possible if $\wtilde-w$ is constant. But then $γe^{γ}-f_{1,α_2}$ and $f_{2,α_1,α_2} w$ both have to be zero, resulting in $c_{α_2}\equiv γ$ and either $c_{α_1}=c_{α_2}\equiv γ$ or $α_1=0$. Since $γ>0$, this is only possible if $α_1=0=α_2$ (cf. Lemma \ref{c-between-zero-and-gamma}), contradicting the assumption $α_1\neq α_2$. Hence, $\wtilde<w$. 
\end{proof}

An important purpose of these pointwise estimates for $w$ lies in serving as groundwork for estimates in better spaces, thus preparing the application of Arzel\`a--Ascoli type arguments.

\begin{lemma}\label{c2hoelderforw}
 Let $\Om$ and $g$ be as in \eqref{cond:Om} and \eqref{condition:g1} and $γ>0$. Then for every $β\in(0,min\set{β_*,β_0})$, with $β_0$ from Lemma \ref{lem-nadirashvili}, the following holds: 
 For every $K>0$ there is a constant $C>0$ such that for any choice of $α_1,α_2\in [0,K]$ with $α_1\neq α_2$, the function $w_{α_2,α_1}$ defined in \eqref{def:w} satisfies 
\begin{equation}\label{eq:c2hoelderforw}
 \norm[C^{2+β}(\Ombar)]{w_{α_2,α_1}}\le C. 
\end{equation}
\end{lemma}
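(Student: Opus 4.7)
My plan is to combine the Schauder estimate of Lemma \ref{GT-6.30} applied to the linear equation \eqref{equationforwalpha} for $w_{\alpha_2,\alpha_1}$ with the uniform $L^\infty$-bound from Lemma \ref{wtildeboundedbelow}. Rewriting \eqref{equationforwalpha} as $-\Delta w_{\alpha_2,\alpha_1}+aw_{\alpha_2,\alpha_1}=-f_{1,\alpha_2}$ in $\Om$ with Robin condition $\delny w_{\alpha_2,\alpha_1}+gw_{\alpha_2,\alpha_1}=0$ on $\dOm$, where $a:=f_{2,\alpha_1,\alpha_2}$, it suffices to uniformly bound the $C^\beta$-norms of the zeroth-order coefficient $a$ and of the right-hand side $f_{1,\alpha_2}$ over all admissible $\alpha_1,\alpha_2\in[0,K]$ with $\alpha_1\neq \alpha_2$. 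Given such bounds, the Schauder estimate directly yields \eqref{eq:c2hoelderforw}.

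To obtain these coefficient bounds, I would first use Lemma \ref{scalareq-solvable} to get $\|c_{\alpha_i}\|_{C^{2+\beta}(\Ombar)}\le M$ uniformly in $\alpha_i\in[0,K]$, with $M$ depending only on $K,\Om,\gamma,g,\beta$, and Lemma \ref{c-between-zero-and-gamma} to ensure the values remain in the fixed interval $[0,\gamma]$. The smooth maps $\xi\mapsto \xi e^\xi$ and $\xi\mapsto e^\xi$ then lift this to uniform $C^\beta$-bounds on $f_{1,\alpha_2}=c_{\alpha_2}e^{c_{\alpha_2}}$ and on the first summand $\alpha_1 e^{c_{\alpha_1}}$ of $f_{2,\alpha_1,\alpha_2}$. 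For the remaining summand $\alpha_1c_{\alpha_1}e^{c_{\alpha_1}}F(c_{\alpha_2}-c_{\alpha_1})$, the key observation is that the function $F$ in \eqref{def:F} is entire, so $F(c_{\alpha_2}-c_{\alpha_1})$ is controlled in $C^\beta$ purely by the (uniformly bounded) $C^\beta$-norm of its argument $c_{\alpha_2}-c_{\alpha_1}$. Multiplying and adding delivers $\|a\|_{C^\beta(\Ombar)}+\|f_{1,\alpha_2}\|_{C^\beta(\Ombar)}\le \Lambda$ with $\Lambda=\Lambda(K,\gamma,g,\Om,\beta)$. Lemma \ref{wtildeboundedbelow} finally supplies $\|w_{\alpha_2,\alpha_1}\|_{C^0(\Ombar)}\le \|\wtilde\|_{C^0(\Ombar)}$, the right-hand side being finite and independent of $(\alpha_1,\alpha_2)$.

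The anticipated obstacle is not the Schauder step itself but the apparent singularity built into the definition of $w_{\alpha_2,\alpha_1}$: dividing $c_{\alpha_2}-c_{\alpha_1}$ by $\alpha_2-\alpha_1$ when $\alpha_1$ approaches $\alpha_2$ cannot be controlled directly from a $C^{2+\beta}$-bound on $c_{\alpha_i}$ alone. The representation \eqref{equationforwalpha} together with analyticity of $F$ is precisely what absorbs this singular denominator, replacing it by a smooth, uniformly $C^\beta$-bounded coefficient; once this is recognised, everything reduces to a standard application of Lemma \ref{GT-6.30}, producing the constant $C=C(K,\gamma,g,\Om,\beta)$ asserted in \eqref{eq:c2hoelderforw}.
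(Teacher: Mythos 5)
Your proposal is correct and follows essentially the same route as the paper: a uniform Schauder estimate (Lemma \ref{GT-6.30}) applied to the linear problem \eqref{equationforwalpha}, with the coefficient $f_{2,\alpha_1,\alpha_2}$ and right-hand side $f_{1,\alpha_2}$ bounded in $C^{\beta}$ via the uniform H\"older bounds on $c_{\alpha}$ and the analyticity of $F$, and the $L^{\infty}$-bound on $w_{\alpha_2,\alpha_1}$ supplied by Lemma \ref{wtildeboundedbelow}. The only cosmetic difference is that you invoke the full $C^{2+\beta}$ bound of Lemma \ref{scalareq-solvable} where the paper gets by with the weaker $C^{\beta}$ bound of Lemma \ref{hoelderestimate}; both suffice.
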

\begin{proof}[Proof of Lemma \ref{c2hoelderforw}]
 According to Lemma \ref{GT-6.30}
, for every $\Lambda>0$, there is $M>0$ such that whenever 
 \[
  \norm[C^{β}(\Ombar)]{φ} \le \Lambda, \qquad \norm[C^{1+β}(\Ombar)]{g}\le \Lambda, \qquad \norm[C^{1+β}(\Ombar)]{ν} \le \Lambda 
 \]
 any solution $w$ of $(Δ-φ)w=f$ in $\Om$, $ gw+\delny w=0$ on $\dOm$ satisfies 
\[
 \norm[C^{2+β}(\Ombar)]{w}\le M (\norm[L^\infty(\Om)]{w}+\norm[C^{β}(\Ombar)]{f}).
\]
 With 
\[
 \Lambda:=\max\set{\sup_{α_1,α_2\in [0,K]}\norm[C^{β}(\Ombar)]{f_{2,α_1,α_2}}, \norm[C^{1+β}(\Ombar)]{g },\norm[C^{1+β}(\Ombar)]{ν}},
\]
 which is finite due to \eqref{cond:Om}, \eqref{condition:g1} and a combination of \eqref{def:f2} with Lemma \ref{hoelderestimate}, we can apply this estimate, deriving that for every $α_1,α_2\in [0,K]$ with $α_1\neq α_2$, 
\[
 \norm[C^{2+β}(\Ombar)]{w_{α_2,α_1}}\le M (\norm[L^\infty(\Om)]{w_{α_2,α_1}}+\norm[C^{β}(\Ombar)]{f_{1,α_2}}).
\]
 Using that, again by Lemma \ref{hoelderestimate}, also $\sup_{α_2\in [0,K]} \norm[C^{β}(\Ombar)]{f_{1,α_2}}$ is finite, as is $\norm[\Lp\infty]{w_{α_2,α_1}}$ due to Lemma \ref{wtildeboundedbelow}, we obtain \eqref{eq:c2hoelderforw} with $C = M(\norm[L^\infty(\Om)]{\wtilde}+\sup_{α_2\in [0,K]} \norm[C^{β}(\Ombar)]{f_{1,α_2}})$.
\end{proof}

For obtaining the convergence of $w_{α_2,α_1}$ as $α_2\to α_1$, the mere extraction of a convergent subsequence, which has been prepared by Lemma \ref{c2hoelderforw}, is insufficient. Fortunately for the identification of its limit, Lemma \ref{c2hoelderforw} has another immediate consequence  pertaining to the continuity of the terms in \eqref{equationforwalpha} with respect to $α$: 

\begin{cor}\label{alphamapstocalphacontinuos}
	Let $\Om$ satisfy \eqref{cond:Om}, $g$ be as in \eqref{condition:g1} and $γ>0$. 
	The map 
	\[
	Γ\colon
	\begin{cases}
	 [0,\infty)\to C^2(\Ombar)\\
	α\mapsto c_{α}
	\end{cases}
	\]
	is continuous. 
\end{cor}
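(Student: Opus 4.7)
The plan is to derive continuity directly from the uniform $C^{2+\beta}$ bound on the difference quotients provided by Lemma \ref{c2hoelderforw}. Since the Hölder norm $C^{2+\beta}(\Ombar)$ dominates the $C^{2}(\Ombar)$ norm, it actually suffices to show local Lipschitz continuity of $\Gamma$ as a map into $C^{2+\beta}(\Ombar)$, which then yields continuity into $C^{2}(\Ombar)$ a fortiori.

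I would proceed as follows. Fix $\alpha_*\in[0,\infty)$ and pick $K>\alpha_*$. For any $\alpha_1,\alpha_2\in[0,K]$ with $\alpha_1\neq\alpha_2$, the definition \eqref{def:w} of $w_{\alpha_2,\alpha_1}$ gives
\[
 c_{\alpha_2}-c_{\alpha_1} = (\alpha_2-\alpha_1)\, w_{\alpha_2,\alpha_1},
\]
and thus by Lemma \ref{c2hoelderforw} (applied with $\beta\in(0,\min\{\beta_*,\beta_0\})$ and the chosen $K$) there is $C=C(K)>0$ such that
\[
 \|c_{\alpha_2}-c_{\alpha_1}\|_{C^{2+\beta}(\Ombar)} \le C\,|\alpha_2-\alpha_1|.
\]
Given any sequence $\alpha_n\to\alpha_*$ in $[0,\infty)$, eventually $\alpha_n\in[0,K]$, and splitting into the indices where $\alpha_n=\alpha_*$ (which contribute zero) and those where $\alpha_n\neq\alpha_*$, the above estimate yields $\|c_{\alpha_n}-c_{\alpha_*}\|_{C^{2+\beta}(\Ombar)}\to 0$, and in particular $\|c_{\alpha_n}-c_{\alpha_*}\|_{C^{2}(\Ombar)}\to 0$. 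This establishes continuity of $\Gamma$ at $\alpha_*$, and since $\alpha_*$ was arbitrary, $\Gamma$ is continuous on $[0,\infty)$.

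There is essentially no hard step: all the work was already done in Lemma \ref{c2hoelderforw}, whose conclusion is precisely the uniform $C^{2+\beta}$ bound on difference quotients that a Lipschitz statement requires. The only minor point to be careful about is that $w_{\alpha_2,\alpha_1}$ is only defined for $\alpha_1\neq\alpha_2$, but this is harmless since the identity $c_{\alpha_2}-c_{\alpha_1}=(\alpha_2-\alpha_1)w_{\alpha_2,\alpha_1}$ is used only in that case, and the case $\alpha_n=\alpha_*$ is trivial by uniqueness (Lemma \ref{uniqueness}).
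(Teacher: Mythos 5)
Your proposal is correct and is essentially the paper's own argument: the paper likewise inserts the definition of $w_{\alpha_2,\alpha_1}$ into the bound of Lemma \ref{c2hoelderforw} to obtain $\|c_{\alpha_2}-c_{\alpha_1}\|_{C^{2+\beta}(\Ombar)}\le C|\alpha_2-\alpha_1|$ and concludes (local) Lipschitz continuity of $\Gamma$. Your additional care about restricting to $[0,K]$ and the trivial case $\alpha_1=\alpha_2$ only makes the same argument more explicit.
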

\begin{proof}
 If we insert the definition of $w_{α_2,α_1}$, \eqref{eq:c2hoelderforw} turns into 
\begin{equation*}
 \norm[C^{2+β}(\Ombar)]{c_{α_2}-c_{α_1}}\le C|α_2-α_1|, 
\end{equation*}
ensuring Lipschitz continuity of $Γ$.
\end{proof}




Now it is time to show that $c_{α}$ is differentiable with respect to $α$ and to characterize the derivative: 

\begin{lemma}\label{differentiable}
 Let $\Om$ satisfy \eqref{cond:Om}, $g$ be as in \eqref{condition:g1} and $γ>0$. 
 For every $α>0$, 
 the function 
\begin{equation}\label{def:calphaprime}
 c_{α}' := \lim_{α_2\to α} w_{α_2,α}
\end{equation}
 exists as limit in $C^2(\Ombar)$ 
and is the unique solution of  
\begin{equation}\label{eq:calphaprime}
 \begin{cases} Δc_{α}' = c_{α} e^{c_α} + (αe^{c_{α}} + αc_{α}e^{c_{α}}) c_{α}'\qquad &\text{ in } \Om,\\
   \delny c_{α}' = -g c_{α}'\qquad &\text{ on } \dOm.
 \end{cases}
\end{equation}
\end{lemma}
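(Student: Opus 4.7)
The plan is to use the uniform $C^{2+β}$ bound from Lemma \ref{c2hoelderforw} as a compactness device and then identify the subsequential limit via the linear elliptic problem it must satisfy; uniqueness for that linear problem will then upgrade subsequential to full convergence.

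First, I fix $α>0$ and consider the family $\set{w_{α_2,α} \mid α_2\in[0,2α],\ α_2\neq α}$. Lemma \ref{c2hoelderforw} (applied with $K=2α$) provides a bound on $\norm[C^{2+β}(\Ombar)]{w_{α_2,α}}$ that is uniform in $α_2$, and by Arzelà--Ascoli this renders the family precompact in $C^2(\Ombar)$. Hence any sequence $α_2^{(k)}\to α$ admits a (non-relabelled) subsequence along which $w_{α_2^{(k)},α}\to w^*$ in $C^2(\Ombar)$ for some $w^*\in C^2(\Ombar)$.

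Next, I pass to the limit in \eqref{equationforwalpha}. By Corollary \ref{alphamapstocalphacontinuos}, $c_{α_2^{(k)}}\to c_α$ in $C^2(\Ombar)$, so $f_{1,α_2^{(k)}}=c_{α_2^{(k)}}e^{c_{α_2^{(k)}}}\to c_α e^{c_α}$ uniformly on $\Ombar$; since $F$ defined in \eqref{def:F} is continuous with $F(0)=1$, also $F(c_{α_2^{(k)}}-c_α)\to 1$ uniformly, and therefore $f_{2,α,α_2^{(k)}}\to αe^{c_α}+αc_αe^{c_α}$ uniformly. The $C^2(\Ombar)$-convergence of $w_{α_2^{(k)},α}$ then allows me to pass to the limit in both the interior equation and the boundary condition of \eqref{equationforwalpha}, showing that $w^*$ solves \eqref{eq:calphaprime}.

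For the identification and uniqueness of the limit I rewrite \eqref{eq:calphaprime} as $(Δ+a)w^*=c_αe^{c_α}$ with $a:=-(αe^{c_α}+αc_αe^{c_α})$ and boundary condition $\delny w^* + gw^* = 0$. Since $α>0$ and $c_α\ge 0$ by Lemma \ref{c-between-zero-and-gamma}, we have $a\le 0$ on $\Ombar$; as $g\ge 0$ and $g\not\equiv 0$ on $\dOm$, alternative (ii) of Lemma \ref{GT-6.31;remarkp.124} applies and yields a unique solution $c_α'\in C^{2+β}(\Ombar)$. Because every subsequential $C^2$-limit of $w_{α_2,α}$ must coincide with this $c_α'$, a standard argument by contradiction (using the precompactness above) forces the whole family $w_{α_2,α}$ to converge to $c_α'$ in $C^2(\Ombar)$ as $α_2\to α$, which is exactly \eqref{def:calphaprime}. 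The main delicate point is the passage from subsequential to full convergence, but it is disposed of by the linear-theory uniqueness in Lemma \ref{GT-6.31;remarkp.124}; all remaining steps are compactness plus continuity of the coefficients, both already established.
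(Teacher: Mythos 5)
Your proposal is correct and follows essentially the same route as the paper's proof: a uniform $C^{2+\beta}$ bound from Lemma \ref{c2hoelderforw} plus Arzel\`a--Ascoli for precompactness, passage to the limit in \eqref{equationforwalpha} via Corollary \ref{alphamapstocalphacontinuos}, and uniqueness for the linear problem from Lemma \ref{GT-6.31;remarkp.124} to convert subsequential into full convergence by contradiction. The only cosmetic difference is your choice $K=2\alpha$ instead of the paper's $K=\alpha+1$, and your (welcome) explicit verification of the sign hypotheses in Lemma \ref{GT-6.31;remarkp.124}.
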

\begin{proof}
 We let $α>0$ and $K:=α+1$, so that from Lemma \ref{c2hoelderforw}, we obtain $C>0$ such that for all $α_2\in [0,K]$ 
\begin{equation}\label{boundsforconvergence}
 \norm[C^{2+β}(\Ombar)]{w_{α_2,α}} \le C.
\end{equation}
 If then $w_{α_2,α}$ were not convergent to the solution $c_{α}'$ of \eqref{eq:calphaprime} as $α_2\to α$, we could find $ε_0>0$ and a sequence $\folge[k]{α_{2,k}}\subset [0,K]$ with limit $α$ such that $\norm[C^2(\Ombar)]{w_{α_{2,k},α}-c_{α}'}>ε_0$ for every $k\in ℕ$. However, according to \eqref{boundsforconvergence} and Arzel\`a--Ascoli's theorem, for a suitably chosen subsequence $\folge[j]{α_{2,k_j}}$, $w_{α_{2,k_j},α}$ converges in $C^2(\Ombar)$ with a limit $w$. 
 By Corollary \ref{alphamapstocalphacontinuos}, we have that $\lim_{α_2\to α} f_{2,α,α_2} = αe^{c_{α}} + αc_{α}e^{c_{α}}$ exists (as limit in $C^2(\Ombar)$), so that $w$ would have to solve 
\begin{equation*}
 \begin{cases} Δw = c_{α} e^{c_α} + (αe^{c_{α}} + αc_{α}e^{c_{α}}) w\\
   \delny w = -g w.
 \end{cases}
\end{equation*}
 But according to Lemma \ref{GT-6.31;remarkp.124}, 
 this problem has a unique solution, i.e. $c_{α}'$, which would contradict the choice of $\folge[k]{α_{2,k}}$. 
\end{proof}

The following estimate gives exactly the quantitative control on $c_{α}'$ that we will need: 

\begin{lemma}\label{calphaprime-estimate}
 Assuming that $\Om$ satisfies \eqref{cond:Om}, $g$ is as in \eqref{condition:g1} and $γ>0$, we let $α>0$.  
 Then the function $c_{α}'$ satisfies 
 \[
  0\ge c_{α}' > -\f1{α}. 
 \]
\end{lemma}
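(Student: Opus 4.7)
The upper bound $c_\alpha'\le 0$ is essentially free: by Lemma \ref{w-negative} every difference quotient $w_{\alpha_2,\alpha}$ is nonpositive, and Lemma \ref{differentiable} identifies $c_\alpha'$ as the $C^2(\Ombar)$-limit of such quotients, so nonpositivity is inherited. The substantive part is the strict lower bound, which I would obtain by a maximum principle applied to the auxiliary function
\[
v := 1 + \alpha c_\alpha'.
\]
Showing $v>0$ in $\Ombar$ is equivalent to the claim.

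The first step is to derive the boundary value problem satisfied by $v$. Using \eqref{eq:calphaprime} a direct computation gives
\begin{equation*}
\Delta v - \alpha e^{c_\alpha}(1+c_\alpha)\,v = -\alpha e^{c_\alpha} \quad \text{in } \Omega, \qquad
\partial_\nu v + g v = g \quad \text{on } \partial\Omega,
\end{equation*}
as one checks by substituting $c_\alpha' = (v-1)/\alpha$ into both lines of \eqref{eq:calphaprime} and simplifying. Since Lemma \ref{c-between-zero-and-gamma} tells us that $c_\alpha>0$ in $\Ombar$, the zeroth-order coefficient $-\alpha e^{c_\alpha}(1+c_\alpha)$ is strictly negative, placing us in the regime where the strong maximum principle and Hopf's boundary point lemma \cite[Thm. 3.5, L. 3.4]{GT} apply without sign restrictions on the solution.

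Now I argue by contradiction: assume $v$ attains a nonpositive minimum at some $x_0\in\Ombar$. If $x_0\in\Omega$, then writing the PDE as $Lv=-\alpha e^{c_\alpha}\le 0$ with $L=\Delta-\alpha e^{c_\alpha}(1+c_\alpha)$ and applying the strong maximum principle to $-v$ forces $v$ to be constant. Plugging a constant $v$ back into the PDE yields $(1+c_\alpha)v\equiv 1$, so $c_\alpha$ would be constant, contradicting Lemma \ref{notconstant} (which applies since $\alpha>0$, hence in particular $\alpha\gamma>0$ is not what matters here -- more precisely, a constant $c_\alpha$ solving $\Delta c_\alpha=\alpha c_\alpha e^{c_\alpha}$ would force $c_\alpha\equiv 0$, then the boundary condition $\partial_\nu c_\alpha=(\gamma-c_\alpha)g$ gives $0=\gamma g$, excluded). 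If instead $x_0\in\partial\Omega$ (with $v>v(x_0)$ inside, as interior minima are ruled out), Hopf's lemma gives $\partial_\nu v(x_0)<0$. On the other hand the boundary condition yields
\[
\partial_\nu v(x_0) = g(x_0)\bigl(1-v(x_0)\bigr) \ge 0,
\]
since $g\ge 0$ and $v(x_0)\le 0$. This contradiction closes the argument and shows $v>0$, i.e.\ $c_\alpha'>-1/\alpha$.

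The only delicate point is the interior case, where one must be sure to invoke the strong (not just weak) maximum principle in order to generate the constancy conclusion that can then be refuted via Lemma \ref{notconstant}; the boundary case is automatic because $g\ge 0$ makes the boundary oblique condition cooperate with the sign of the minimum, so Hopf's lemma directly opposes the computed value of $\partial_\nu v(x_0)$.
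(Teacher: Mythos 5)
Your proof is correct, and it takes a genuinely different route from the paper's. The paper does not work with $c_\alpha'$ (or any transform of it) directly at the boundary: it introduces approximations $\zeta_k$ solving the same interior equation but with the perturbed Robin condition $\partial_\nu\zeta_k=-(g+\frac1k)\zeta_k$, so that the boundary coefficient is strictly positive and the sign of $\zeta_k$ at a boundary minimum can be read off from the elementary inequality $\partial_\nu\zeta_k(x_0)\le 0$ alone, with no appeal to Hopf's lemma; the interior case is handled by the second-derivative test $\Delta\zeta_k(x_0)\ge 0$, which yields the quantitative bound $\zeta_k(x_0)\ge -\frac{c_\alpha(x_0)}{\alpha(1+c_\alpha(x_0))}$, and the conclusion follows by letting $k\to\infty$. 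Your substitution $v=1+\alpha c_\alpha'$ avoids the approximation entirely: the source term $-\alpha e^{c_\alpha}$ and the inhomogeneity $g$ in the Robin condition for $v$ both have favourable signs, and Hopf's lemma absorbs the degenerate case $g(x_0)=0$ that forces the paper to regularize. This is arguably cleaner; what the paper's computation buys in exchange is the slightly sharper (though unneeded) bound $c_\alpha'\ge -\frac{1}{\alpha(1+1/\sup c_\alpha)}$. Two small remarks on your write-up: the claim that strict negativity of the zeroth-order coefficient lets the maximum principle apply ``without sign restrictions'' is not accurate --- \cite[Thm.~3.5, L.~3.4]{GT} still require the interior maximum of $-v$ to be nonnegative --- but this is harmless since your contradiction hypothesis $v(x_0)\le 0$ supplies exactly that; and in the interior case the constancy of $v$ can be refuted even more quickly, since $(1+c_\alpha)v\equiv 1$ with $v\le 0$ and $c_\alpha>0$ is already impossible, without routing through Lemma~\ref{notconstant}.
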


\begin{proof}
 We let $ζ_{k}$ be a solution (whose existence is guaranteed by Lemma \ref{GT-6.31;remarkp.124} (i)) to 
\[
 \bigg(Δ-(αe^{c_{α}} + αc_{α}e^{c_{α}})\bigg) ζ_{k}  = c_{α} e^{c_α}\;\text{in } \Om,\qquad  \delny   ζ_{k} =  -\left(g+\f1{k}\right) ζ_{k}\; \text{on } \dOm.
 \]
 In light of Lemma \ref{GT-6.30} 
 and Arzel\`a--Ascoli's theorem together with unique solvability of \eqref{eq:calphaprime}, it is easy to see that $ζ_{k}\to c_{α}'$ in $C^2(\Ombar)$ as $k\to \infty$. 
For any $k\in ℕ$, there is either $x_0\in \dOm$ such that $ζ_{k}(x_0)<ζ_{k}(x)$ for all $x\in \Om$ or there is $x_0\in \Om$ such that $ζ_{k}$ attains its minimum at $x_0$. In the former case, we are dealing with a minimum on the boundary and hence $0 \ge \delny ζ_{k}(x_0) = -(g(x_0)+\f1{k}) ζ_{k}(x_0)$, which due to negativity of $-(g(x_0)+\f1{k})$ shows that $ζ_{k}(x_0)\ge 0$, so that $ζ_{k}\ge 0$ in $\Ombar$. 
In the latter case, $Δζ_{k}(x_0)\ge 0$, i.e. 
\[
 c_{α}(x_0) e^{c_α(x_0)} + (αe^{c_{α}(x_0)} + αc_{α}(x_0)e^{c_{α}(x_0)}) ζ_{k}(x_0) \ge 0
\]
and hence 
\[
  c_{α}(x_0) + (α + αc_{α}(x_0)) ζ_{k}(x_0) \ge 0, 
\]
meaning that 
\[
 ζ_{k} (x_0) \ge - \f{c_{α}(x_0)}{α (1 + c_{α}(x_0))} \ge - \f1{α(1+\f{1}{\sup c_{α}})}. 
\]
Passing to the limit $k\to \infty$, we infer 
\[
 0\ge c_{α}' \ge - \f1{α(1+\f{1}{\sup c_{α}})} > -\f1{α}\qquad \text{in } \Om, 
\]
where the first inequality is due to the defintion of $c_\alpha'=\lim_{α_2\to α}w_{\alpha_2,\alpha}$ and nonpositivity of $w_{α_2,α}$ by Lemma \ref{w-negative}.
\end{proof}

These preparations about $c_{α}'$ culminate in the following statement concerning the dependence of the bacterial mass on $α$: 

\begin{lemma}\label{alpha-to-mass-injective}
 Let $\Om$ satisfy \eqref{cond:Om}, $g$ be as in \eqref{condition:g1} and $γ>0$. 
 
 The map 
\[
m\colon \begin{cases} 
[0,\infty)\to [0,\infty)\\
  α\mapsto α\int e^{c_α}
 \end{cases}
\]
 is continuous in $[0,\infty)$, differentiable in $(0,\infty)$, monotone increasing, and surjective, hence bijective.  
\end{lemma}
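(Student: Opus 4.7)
The plan is to verify each of the four claimed properties of $m$ in turn, exploiting the technical results about $c_\alpha$ and $c_\alpha'$ already developed in Section \ref{sec:massmonotone}.

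For \emph{continuity}, Corollary \ref{alphamapstocalphacontinuos} yields continuity of $\alpha \mapsto c_\alpha$ in $C^2(\Ombar)$, hence in particular in $C^0(\Ombar)$. Since $c \mapsto e^c$ is continuous on $C^0(\Ombar)$ and integration over $\Om$ is a bounded linear functional on that space, $\alpha \mapsto \io e^{c_\alpha}$ is continuous, and multiplication by the scalar $\alpha$ preserves continuity.

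For \emph{differentiability} on $(0,\infty)$, I fix $\alpha>0$ and apply the mean value theorem pointwise to obtain, for each $\alpha_2\neq\alpha$,
\[
\f{e^{c_{\alpha_2}}-e^{c_\alpha}}{\alpha_2-\alpha} = e^{\xi_{\alpha_2}}\, w_{\alpha_2,\alpha}
\]
with $\xi_{\alpha_2}$ pointwise between $c_\alpha$ and $c_{\alpha_2}$. By Lemma \ref{differentiable} the right-hand side converges in $C^2(\Ombar)$, hence uniformly, to $e^{c_\alpha} c_\alpha'$ as $\alpha_2\to\alpha$. Passing to the limit under the integral and combining with the product rule for the factor $\alpha$ yields
\[
 m'(\alpha) = \io e^{c_\alpha} + \alpha\io e^{c_\alpha} c_\alpha' = \io e^{c_\alpha}\bigl(1+\alpha c_\alpha'\bigr).
\]

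For \emph{strict monotonicity}, this is exactly where Lemma \ref{calphaprime-estimate} pays off: it furnishes the pointwise bound $c_\alpha' > -1/\alpha$ in $\Ombar$, so $1+\alpha c_\alpha' > 0$ pointwise, and since $e^{c_\alpha}\ge 1 > 0$, the integrand is strictly positive, giving $m'(\alpha) > 0$ for every $\alpha>0$. Together with continuity on $[0,\infty)$, this makes $m$ strictly increasing on $[0,\infty)$. Injectivity is an immediate consequence. Finally, for \emph{surjectivity}, note that $m(0)=0$, while Lemma \ref{c-between-zero-and-gamma} gives $c_\alpha\ge 0$, hence $e^{c_\alpha}\ge 1$ and so $m(\alpha)\ge \alpha|\Om|\to\infty$ as $\alpha\to\infty$. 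Continuity plus the intermediate value theorem then yield $m([0,\infty))=[0,\infty)$.

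The main obstacle is really concealed in the earlier preparation rather than in this lemma itself: the strict inequality in $m'(\alpha)>0$ hinges entirely on the sharp lower bound $c_\alpha' > -1/\alpha$ from Lemma \ref{calphaprime-estimate}, which in turn relied on the auxiliary approximants $\zeta_k$ and careful use of the maximum principle. With that estimate in hand, the present lemma reduces to an exchange of limit and integral (justified by the $C^2$-convergence from Lemma \ref{differentiable}) and a straightforward intermediate value argument.
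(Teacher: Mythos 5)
Your proof is correct and takes essentially the same route as the paper: form the difference quotient, identify $m'(α)=\io e^{c_α}(1+αc_α')$ using Lemma \ref{differentiable} and Corollary \ref{alphamapstocalphacontinuos}, deduce positivity of $m'$ from the bound $c_α'>-\f1{α}$ of Lemma \ref{calphaprime-estimate}, and obtain surjectivity from $m(α)\ge α|\Om|$. The only cosmetic difference is that you linearize $e^{c_{α_2}}-e^{c_α}$ via the pointwise mean value theorem, whereas the paper factors the quotient through the auxiliary function $F(z)=(e^z-1)/z$ from \eqref{def:F}; both reduce to the same uniform convergence argument.
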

\begin{proof}
 Letting $α>0$ and $α_2\in(0,∞)\setminus\set{α}$, we see that with $F$ from \eqref{def:F}
\begin{align*}
 \f{m(α_2)-m(α)}{α_2-α} &= \io \f{α_2e^{c_{α_2}}-αe^{c_{α}}}{α_2-α} \\
 &= \io e^{c_{α}} + \io αe^{c_{α}} \f{e^{c_{α_2}-c_{α}}-1}{α_2-α}\\
 &= \io e^{c_{α}} + \io αe^{c_{α}} F(c_{α_2}-c_{α}) w_{α_2,α},
\end{align*}
  where we can pass to the limit $α_2\to α$ easily, thanks to Corollary \ref{alphamapstocalphacontinuos} and Lemma \ref{differentiable}, obtaining the existence of
 \[
  m'(α)= \io e^{c_{α}} + \io αe^{c_{α}} c_{α}' = \io e^{c_{α}}(1 + α c_{α}').
 \]
 The lower estimate $c_{α}'>-\f1{α}$ from Lemma \ref{calphaprime-estimate} shows that $m'$ is positive. Surjectivity results from the trivial estimate 
\[
 m(α) = α\io e^{c_α} \ge α\io e^0 = α|\Om|. \qedhere
\]
\end{proof}

\section{The system: Existence and uniqueness} \label{sec:system}

We now want to employ the information on the scalar equation \eqref{scalarbvp} obtained in the previous section for solving the actual system \eqref{system}. In order to make sure that each can be transformed into the other, we look at the first equation of \eqref{system}: 

\begin{lemma}\label{unique-up-to-multiple}
	Let $\Om$ be a bounded domain and $c\in C^2(\Om)\cap C^1(\Ombar)$. Assume that $n\in C^2(\Om)\cap C^1(\Ombar)$ satisfies
		\begin{equation}\label{n-eq}
	\begin{cases}
	0 = Δ n -\nabla\cdot (n\nabla c)\qquad\text{in } \Om\\
	\delny n = n\delny c\qquad\text{on } \dOm.
	\end{cases}
	\end{equation}
	 Then there is $\alpha\in\mathbb R$ such that 
\begin{equation} \label{relationbetweennandc}
n=\alpha e^c.
\end{equation}
\end{lemma}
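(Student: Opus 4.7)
The plan is to rewrite the first equation and its boundary condition as a divergence-form problem, and then use a standard testing (energy) argument against $ne^{-c}$.

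First I would observe that the PDE in \eqref{n-eq} has the divergence form
\[
 0=\Delta n-\nabla\cdot(n\nabla c)=\nabla\cdot(\nabla n-n\nabla c)=\nabla\cdot\bigl(e^{c}\nabla(ne^{-c})\bigr)\qquad\text{in }\Om,
\]
since $\nabla(ne^{-c})=e^{-c}(\nabla n-n\nabla c)$. In the same way, the boundary condition $\delny n=n\delny c$ says exactly that
\[
 (\nabla n-n\nabla c)\cdot\nu=e^{c}\,\partial_\nu(ne^{-c})=0\qquad\text{on }\dOm.
\]
Thus \eqref{n-eq} is equivalent to a homogeneous Neumann problem for $u:=ne^{-c}$ with the weighted divergence operator $\nabla\cdot(e^{c}\nabla\,\cdot\,)$.

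Next, I would multiply $\nabla\cdot(e^{c}\nabla u)=0$ by $u$ and integrate over $\Om$. Both $n$ and $c$ lie in $C^{2}(\Om)\cap C^{1}(\Ombar)$, so the vector field $e^{c}\nabla u\cdot u$ is in $C^1(\Om)\cap C^0(\Ombar)$ with integrable divergence; a standard exhaustion of $\Om$ by subdomains $\Om_{\varepsilon}:=\{x\in\Om:\mathrm{dist}(x,\dOm)>\varepsilon\}$ allows us to apply the divergence theorem and pass to the limit $\varepsilon\to 0$ (using uniform continuity of $u$ and $e^{c}\nabla u$ up to the boundary being unnecessary, since the boundary flux vanishes). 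This yields
\[
 0=\int_{\dOm} u\,e^{c}\partial_\nu u\,dS-\io e^{c}|\nabla u|^{2}\,dx=-\io e^{c}|\nabla u|^{2}\,dx,
\]
because $\partial_\nu u=0$ on $\dOm$ by the reformulated boundary condition.

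Since $e^{c}>0$ in $\Ombar$, the identity forces $\nabla u\equiv 0$ in $\Om$. As $\Om$ is connected (being a domain), $u=ne^{-c}$ is constant on $\Om$, and by continuity on $\Ombar$; calling this constant $\alpha$ gives $n=\alpha e^{c}$, which is \eqref{relationbetweennandc}.

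The only mildly delicate point is the justification of the integration by parts, since the second derivatives of $n$ and $c$ are only assumed to exist inside $\Om$; this is handled by the subdomain approximation just indicated (or equivalently by observing that $\nabla n-n\nabla c$ is continuous on $\Ombar$ with zero normal trace and divergence identically zero, which suffices for Green's identity). Everything else is purely algebraic manipulation of the PDE.
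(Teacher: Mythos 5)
Your argument is correct, and it reaches the conclusion by a genuinely different and noticeably shorter route than the paper's. The paper also exploits the divergence structure (it records the weak identity $\io \na\rho\cdot\na\phi=\io\rho\na c\cdot\na\phi$ for $\rho\in\{n,e^c\}$ and all $\phi\in H^1(\Om)$), but instead of testing with $ne^{-c}$ it first reduces to the case where $n$ is positive somewhere, works on the positivity set $\Om_+=\{n>0\}$, and tests with the truncated functions $\psi_\epsilon=\max\{n/(\epsilon e^c),1\}$ and $\log\psi_\epsilon$; the resulting identity $4\int_{\{n\ge\epsilon e^c\}}\bigl|\na\sqrt{n/e^c}\bigr|^2e^c\,dx=0$ only yields constancy of $n/e^c$ on a connected component of $\Om_+$, which must then be propagated to all of $\Om$ by a continuity and connectedness argument. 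Your choice of the single admissible test function $u=ne^{-c}\in C^1(\Ombar)\subset H^1(\Om)$ collapses all of this into the one identity $\io e^c|\na u|^2=0$, valid regardless of the sign of $n$, so no case distinction, no truncation limit $\epsilon\to0$, and no extension from $\Om_+$ to $\Om$ are needed. What the paper's entropy-flavoured computation buys is robustness in lower-regularity settings where only nonnegative weak solutions and logarithmic test functions are available; under the classical hypotheses of this lemma your quadratic energy identity is available and preferable. Two minor caveats: the passage to the limit in the boundary integrals over $\partial\Om_\varepsilon$ does use that $e^c\na u=\na n-n\na c$ is continuous on $\Ombar$ with vanishing normal component on $\dOm$ (which the $C^1(\Ombar)$ hypotheses provide), so your parenthetical remark that continuity up to the boundary is ``unnecessary'' is misphrased; and, exactly as in the paper's own derivation of its weak formulation, some boundary regularity of $\Om$ beyond ``bounded domain'' is implicitly assumed -- it is in any case needed for $\delny$ to be meaningful.
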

\begin{proof}
	For any $c$ with the assumed regularity, $e^c$ is a positive element of $C^2(\Om)\cap C^1(\Ombar)$ and 
	\[
	Δ e^c - \nabla \cdot (e^c\nabla c) = \nabla \cdot (e^c\nabla c)-\nabla \cdot (e^c \nabla c) = 0\qquad \text{in } \Om. 
	\]
 If $n\equiv0$, the assertion is trivial with $\alpha=0$. Note that $-n$ is also a solution of \eqref{n-eq}, therefore we can assume without loss of generality that there exists a point $x_0\in\Omega$ such that $n(x_0)>0$. Thus, we have
  \begin{align}\nonumber
 4\bigg|\nabla\sqrt{\frac{n}{ e^c}}\bigg|^2 e^c 
 &=
 \bigg|\nabla \frac{n}{e^c}\bigg|^2\frac{e^{2c}}{n}
 =
 \left|\frac{\nabla n}{e^c}-\frac{n\nabla e^c}{e^{2c}}\right|^2\frac{e^{2c}}{n}
 \\&=\nonumber
 \left(\frac{|\nabla n|^2}{n}-\frac{\nabla n\cdot\nabla e^c}{e^c}\right)+\left(\frac{n|\nabla e^c|^2}{e^{2c}}-\frac{\nabla n\cdot\nabla e^c}{e^c}\right)
 \\&=\nonumber
 \nabla n\cdot \nabla\log\frac{n}{e^c}-\nabla e^c \cdot \nabla\frac{n}{e^c}
 \\&=
 \nabla n\cdot \nabla\log\frac{n}{\epsilon e^c}-\epsilon\nabla e^c \cdot \nabla\frac{n}{\epsilon e^c} \label{computation.42}
 \end{align}
 in $\Om_+:=\set{x\in \Om\mid n(x)>0}$ for every $\epsilon>0$. 
The most useful form in which to use the equation for $n$ will be the weak version of \eqref{n-eq}:  
Each function $\rho\in \set{n, e^c}$ satisfies 
 \begin{equation}\label{weakeq}
 \io \na \rho \cdot \na \phi = \io \rho\na c\cdot \na \phi \qquad \text{for every } \phi\in H^1(\Om). 
 \end{equation}
 For $\epsilon>0$ we consider $\psi_\epsilon:=\max\{\frac{n}{\epsilon e^c},1\}$. We have that $\psi_\epsilon(x_0)\neq1$ if $\epsilon<\frac{n(x_0)}{e^{c(x_0)}}$. Note that $\psi_\epsilon$, $\sqrt{\psi_\epsilon}$ and $\log(\psi_\epsilon)$ belong to $H^1(\Omega)$. We therefore are allowed to use $\psi_\epsilon$ and $\log(\psi_\epsilon)$ as test functions in \eqref{weakeq}. By \eqref{computation.42}, it holds that 
 \begin{align*}\nonumber
 4\int_{\{n\geq \epsilon e^c\}}\bigg|\nabla\sqrt{\frac{n}{ e^c}}\bigg|^2 e^c dx
 &=
 \int_{\{n\geq \epsilon e^c\}}\nabla n\cdot \nabla\log\frac{n}{\epsilon e^c}dx-\epsilon\int_{\{n\geq \epsilon e^c\}}\nabla e^c \cdot \nabla\frac{n}{\epsilon e^c}dx
 \\
 &=
 \int_{\Omega}\nabla n\cdot \nabla\log(\psi_\epsilon) dx-\epsilon\int_{\Omega}\nabla e^c \cdot \nabla\psi_\epsilon dx
 \\&
 \stackrel{\eqref{weakeq}}{=}
 \int_{\Omega}n\nabla c\cdot \nabla\log(\psi_\epsilon)dx-\int_{\Omega}e^c\nabla c \cdot \nabla\psi_\epsilon dx
 \\&=
 \int_{\{n\geq \epsilon e^c\}}\nabla c \cdot \left(n\frac{\nabla n}n-n\frac{\nabla e^c}{e^c}-\frac{e^c}{e^c}\nabla n +e^cn\frac{\nabla e^c}{e^{2c}}\right)dx=0.
 \end{align*}	
 By letting $\epsilon\to0$, we obtain that $\frac{n}{e^c}$ is constant on the connected component of $\Om_+$ containing $x_0$. Let $A$ be the connected component of $\Om_+$ that contains $x_0$. We can conclude that then there exists $\alpha>0$ such that $n=\alpha e^c$ on $A$.
As $c$ and $n$ are continuous, we have that $n=\alpha e^c>0$ also holds on $\overline A$. However, this directly implies that $A=\Omega$, which yields the assertion.
\end{proof}

With this, we can prove the main result: 

\begin{proof}[Proof of Theorem \ref{thm:main}]
 Due to Lemma \ref{alpha-to-mass-injective}, there is exactly one number $α\in[0,\infty)$ such that $m(α)=m$. With this $α$, Lemma \ref{scalareq-solvable} ensures solvability of \eqref{scalarbvp}, and setting $n:=αe^c$, we obtain a solution to \eqref{system} with $\io n=m(α)$. \\
On the other hand, if $(n,c)\in (C^{2}(\Om)\cap C^1(\Ombar))^2$ solves \eqref{system} and satisfies $\int_{\Omega}n\ge 0$, then by Lemma \ref{unique-up-to-multiple}, there exists a number $\alpha\geq0$ such that $n = α e^c$. Thus, $c$ solves \eqref{scalarbvp} fulfilling $m=\io n = α\io e^c = m(α)$. Uniqueness of $α$ satisfying $m(α)=m$ and of the solution of \eqref{scalarbvp} with this value of $α$ (according to Lemma \ref{uniqueness}) show uniqueness of the solution to \eqref{system}. \\
 That this solution is not constant for $\io n>0$ implying $\alpha>0$ can be seen from Lemma \ref{notconstant}.
\end{proof}


\section{The shape of the solution}\label{sec.shape}
Having shown existence and uniqueness of solutions, we want to use this section to illustrate some of their qualitative properties and to gain insight into their shape. We begin with the radially symmetric setting and the proof of Theorem \ref{thm:main2} in Section \ref{sec.convexity}. Then we will turn our attention to the one-dimensional setting and the derivation of an implicit representation of the solution, Section \ref{sec:one-D}, and finally in Section \ref{sec:numerics} we will present the results of some numerical experiments.

\subsection{The radial setting and convexity}\label{sec.convexity}
Here we treat the special case, where $\Omega:=B_R$ is the open ball of radius $R$ at $0$. Moreover, we assume that $g$ is constant. Let $\partial_r=\frac{x}{|x|}\cdot\nabla$ be the radial derivative. We can rewrite \eqref{system} into
\begin{align}\label{system.ball}
\begin{cases}
0 = Δn- \nabla\cdot(n\nabla c)&\text{in } B_R \\
0= Δc-nc&\text{in } B_R \\
\partial_r c = (γ-c)g&\text{on } \partial B_R\\
\partial_rn = n\partial_r c&\text{on } \partial B_R
\end{cases}
\end{align}
for some $g>0$ and $γ>0$. For fixed mass $m>0$, this system admits a unique classical solution $(n,c)$ with $n\ge 0$ such that $\io n = m$ according to Theorem \ref{thm:main}. This solution has to be radially symmetric and we may write $n(|x|)=n(x)$ as well as $c(|x|)=c(x)$. Moreover, we have seen in Lemma 
\ref{unique-up-to-multiple} that
\begin{align*}
n(r)=n(R)e^{c(r)-c(R)}
\end{align*}
is satisfied for all $0\leq r\leq R$.

	Foundation of the proof of Theorem \ref{thm:main2} will be the well-known fact that for smooth radially symmetric functions convexity is ensured if the second derivative in radial direction has positive sign. We begin this section with an elementary proof of this fact.

	\begin{lemma}\label{lem.convex}
		Let $N\ge1$, $R>0$ and $\Om=B_R\subset ℝ^N$. Let $u:[0,R)\to \mathbb R$ be differentiable in $0$  and (strictly) convex such that $ u'(0)\geq0$. Then $x\mapsto u(|x|)$ is a (strictly) convex function on $\Om$.
	\end{lemma}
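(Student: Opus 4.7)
The plan is to reduce the statement to two standard one-dimensional facts plus the triangle inequality for the Euclidean norm. Given $x,y\in B_R$ and $\lambda\in(0,1)$, I would write the chain
\begin{equation*}
u(|\lambda x + (1-\lambda) y|) \;\leq\; u(\lambda|x| + (1-\lambda)|y|) \;\leq\; \lambda u(|x|) + (1-\lambda) u(|y|),
\end{equation*}
where the second inequality is the defining convexity of $u$ on $[0,R)$ and the first combines the triangle inequality $|\lambda x + (1-\lambda) y|\le \lambda|x|+(1-\lambda)|y|$ with monotonicity of $u$. Note that $\lambda|x|+(1-\lambda)|y|<R$, so both arguments stay in the domain of $u$.

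The only nontrivial preliminary is thus showing that $u$ is nondecreasing on $[0,R)$. For this I would invoke the standard fact that a convex function on an interval has a nondecreasing right derivative $u'_+$; differentiability at $0$ identifies $u'_+(0)=u'(0)\ge 0$, so $u'_+(r)\ge 0$ throughout $[0,R)$, and the subgradient inequality $u(r_2)-u(r_1)\ge u'_+(r_1)(r_2-r_1)$ for $0\le r_1<r_2<R$ gives $u(r_1)\le u(r_2)$ at once.

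The strict case I expect to handle with a short case distinction, after observing that strict convexity together with $u'(0)\ge 0$ forces $u'_+$ to be strictly increasing on $(0,R)$, hence $u$ strictly increasing on $[0,R)$. For $x\neq y$ in $B_R$: if $|x|\neq |y|$, strict convexity of $u$ turns the second inequality in the displayed chain into a strict one. If instead $|x|=|y|$ (necessarily positive since $x\neq y$), then $x$ and $y$ are not positively proportional, so the triangle inequality is strict, $|\lambda x+(1-\lambda)y|<|x|$, and strict monotonicity of $u$ makes the first inequality strict. I do not anticipate a real obstacle here; the only bookkeeping is in the strict monotonicity step, which is immediate from $u'_+(r)>u'_+(0)\ge 0$ for $r>0$.
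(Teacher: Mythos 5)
Your proposal is correct and follows essentially the same route as the paper: the same chain combining the triangle inequality, monotonicity of $u$, and one-dimensional convexity, with the same two-case split ($|x|=|y|$ versus $|x|\neq|y|$) to locate where strictness enters. The only difference is cosmetic: you derive monotonicity from the nondecreasing right derivative and the subgradient inequality, whereas the paper argues directly with secant lines through $0$ after normalizing $u(0)=0$; both are equally elementary.
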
 
\begin{proof}
	We assume that $u$ is strictly convex. First, we show that $u$ is monotone. Without loss of generality, we assume that $u(0)=0$, because otherwise we can consider the function $u-u(0)$. Let $r>0$. By the convexity of $u$, we have
	\begin{align*}
	0\leq ru'(0)=\lim\limits_{t\to0}\frac{u(tr)}{t}\leq  u(r).
	\end{align*}
	Thus, $u$ is non-negative. Let $0<r_1<r_2$. The strict convexity of $u$ implies
	\begin{align*}
	u(r_1)&=u\left(\frac{r_1}{r_2}r_2+\left(1-\frac{r_1}{r_2}r_2\right)0\right)
	< \frac{r_1}{r_2}u(r_2)+\left(1-\frac{r_1}{r_2}\right)u(0)
	=\frac{r_1}{r_2}u(r_2)\leq u(r_2),
	\end{align*}
	 which shows that $u$ is strictly monotonously increasing.
	 The next step is to prove that $x\mapsto u(|x|)$ is strictly convex. Let $x,y\in B_R$, $x\neq y$ and $t\in(0,1)$. 
	 
	 Case 1: $|x|=|y|$: Due to the strict convexity of the ball $\overline{B_{|x|}}$, we obtain $|tx+(1-t)y|< t|x|+(1-t)|y|$. Then the strict monotonicity of $u$ yields 
	 \begin{align*}
	 u(|tx+(1-t)y|)< u(t|x|+(1-t)|y|)\leq tu(|x|)+(1-t)u(|y|),
	 \end{align*}
	  where we have used that $u$ is convex.
	  
	  Case 2: $|x|\neq |y|$: The triangle inequality ensures that $|tx+(1-t)y|\leq t|x|+(1-t)|y|$. In this case, we employ the strict convexity of $u$ and the fact that $u$ is monotonically increasing to see that 
	  \begin{align*}
	  u(|tx+(1-t)y|)\leq u(t|x|+(1-t)|y|)< tu(|x|)+(1-t)u(|y|).
	  \end{align*}
	   Thus, $x\mapsto u(|x|)$ is strictly convex. For the case of convexity instead of strict convexity, the same proof applies with each '$<$' replaced by '$\le$'.
\end{proof}

In particular, Lemma \ref{lem.convex} shows (strict) convexity of radially symmetric functions $c\in C^1(\Om)$ that are differentiable at $0$ (and thus automatically satisfy $\partial_rc(0)=0$) and whose radial derivative is (strictly) increasing (which entails convexity of the restriction of $c$ to a radial line). Both is the case for solutions $c$ to \eqref{system.ball}:
	
\begin{lemma}\label{lem.monotone}
	Let $R>0$, $\Om=B_R$, and let $g>0$ and $γ>0$ be constant. Then the solution to \eqref{system.ball} satisfies 
		\[
	 \partial_rc(r)>0\text{ for } r\in(0,R)\text{ and } \partial_rc \text{ is strictly increasing on } (0,R).
	\]
\end{lemma}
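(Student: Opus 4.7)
The plan is to reduce the system to a scalar radial ODE and read both claims off an integrated form. By Theorem \ref{thm:main} and Lemma \ref{unique-up-to-multiple} we have $n=\alpha e^c$ for some $\alpha\ge 0$, and $\alpha>0$ because the mass implicitly prescribed in the setting of Theorem \ref{thm:main2} is positive. Writing $c$ as a function of $r=|x|$, the equation for $c$ in \eqref{system.ball} becomes
\[
c''(r)+\frac{N-1}{r}c'(r)=\alpha\,c(r)e^{c(r)}\quad\text{in }(0,R),\qquad c'(0)=0,\quad c'(R)=(\gamma-c(R))g,
\]
and Lemma \ref{c-between-zero-and-gamma} guarantees $0<c<\gamma$ on $[0,R]$.

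For the first claim, I would multiply by $r^{N-1}$ to rewrite the ODE in divergence form $(r^{N-1}c')'=\alpha r^{N-1}ce^c$ and integrate from $0$ to $r$, obtaining the representation
\[
r^{N-1}c'(r)=\alpha\int_0^r s^{N-1}c(s)e^{c(s)}\,ds\qquad\text{for every }r\in(0,R].
\]
Since $c>0$ on $[0,R]$, the integrand is strictly positive, and hence $c'(r)>0$ for every $r\in(0,R]$. This already yields the first assertion and, as a byproduct, strict monotonicity of $c$ on $[0,R]$.

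For the second claim, I would use the ODE in the form $c''(r)=\alpha c(r)e^{c(r)}-\tfrac{N-1}{r}c'(r)$. The case $N=1$ is immediate because then $c''(r)=\alpha c(r)e^{c(r)}>0$. For $N\ge 2$, the fact that $c$ is strictly increasing on $[0,R]$ together with strict monotonicity of $\xi\mapsto \xi e^\xi$ on $[0,\infty)$ shows that $s\mapsto c(s)e^{c(s)}$ is strictly increasing; estimating pointwise under the integral and combining with the integral identity above gives
\[
\int_0^r s^{N-1}c(s)e^{c(s)}\,ds<\frac{r^N}{N}\,c(r)e^{c(r)},\qquad \text{whence}\qquad \frac{N-1}{r}c'(r)<\frac{N-1}{N}\alpha c(r)e^{c(r)}<\alpha c(r)e^{c(r)},
\]
so that $c''(r)>0$ on $(0,R)$, which is exactly the claimed strict monotonicity of $c'$. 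No step here is a genuine obstacle; the only point deserving care is the verification that $\alpha>0$ (without which the ODE would be trivial and both claims would fail), and the passage to the one-variable ODE, which is justified by the radial symmetry inherited from uniqueness in Theorem \ref{thm:main}.
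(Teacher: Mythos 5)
Your proof is correct, and the first half (integrating $(r^{N-1}c')'=\alpha r^{N-1}ce^c$ from $0$ to $r$ to get $c'(r)>0$) is exactly what the paper does. For the second half the two arguments diverge: the paper substitutes $t=\rho/r$ in the same integral identity to write $\partial_rc(r)=r\int_0^1 t^{N-1}\alpha c(rt)e^{c(rt)}\,dt$ and reads off strict monotonicity of $\partial_r c$ directly from the monotonicity of $c$ and of the prefactor $r$, without ever touching $c''$; you instead bound $\int_0^r s^{N-1}c(s)e^{c(s)}\,ds<\tfrac{r^N}{N}c(r)e^{c(r)}$ to show $\tfrac{N-1}{r}c'(r)<\tfrac{N-1}{N}\alpha c(r)e^{c(r)}$ and hence $c''(r)\ge \tfrac{\alpha}{N}c(r)e^{c(r)}>0$. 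Both are sound; the paper's substitution trick is slightly slicker and avoids the case split on $N$, while your route yields a quantitative lower bound on $c''$ (not just positivity of the difference quotients of $c'$), which is in fact a marginally stronger statement. Your attention to the points that actually need checking --- that $\alpha>0$ because $m>0$, and that radial symmetry (from uniqueness) justifies the reduction to a one-variable ODE with $c'(0)=0$ --- matches the paper's setup preceding the lemma.
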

\begin{proof}
	Due to the radial symmetry of $c$ and according to Lemma \ref{unique-up-to-multiple}, we can rewrite the equation for $c$ as
	\begin{align}\label{equation-radial}
	\frac{1}{r^{N-1}}\partial_r\left(r^{N-1}\partial_r c(r)\right)=\alpha c(r) e^{c(r)} \qquad \text{for } r\in(0,R)
	\end{align}
	with some $\alpha>0$. Multiplying this by $r^{N-1}$ and integration with respect to $r$ entail that
	\begin{align*}
	\partial_rc(r) =\frac{1}{r^{N-1}} \int_0^r \rho^{N-1} \alpha c(\rho)e^{c(\rho)} d\rho >0 \qquad \text{for every }r\in(0,R),
	\end{align*}
	where we have used that $\partial_rc(0)=0$ and $c>0$.  Substituting $t=\frac\rho{r}$ in the integral, we have
	\begin{align*}
	\partial_rc(r)=r\int_0^1 t^{N-1} \alpha c(rt)e^{c(rt)} dt \qquad \text{for }r\in(0,R).
	\end{align*}
	The positivity and strict monotonicity of $c$ imply that $r\mapsto c(rt)e^{c(rt)}$ 
	and thus $r\mapsto \partial_rc(r)$ are strictly monotonically increasing. This was 
	the assertion.
\end{proof}

With this we can prove the second of our main theorems:
\begin{proof}[Proof of Theorem \ref{thm:main2}]
 Combining Lemma \ref{lem.monotone} with Lemma \ref{lem.convex}, we have that $c$ is strictly convex. Moreover, as the exponential function is strictly convex and increasing,  $n=\alpha e^c$ is also strictly convex.
\end{proof}

Remaining in this setting, let us derive some estimates for $c$. It may be of particular interest to note that upon the choice of $r=R$ the following proposition also shows that in either of the limits $g\to \infty$ or $m\to 0$, the boundary condition turns into a Dirichlet boundary condition, as used in \cite{tuval}.

\begin{proposition}\label{boundaryproposition}
	Under the assumptions of Theorem \ref{thm:main2} it holds that
	\begin{equation}\label{estimate.for.boundary}
	\frac{g}{g+ \sqrt{\f m{|\Om|} e^{γ}}}e^{(r-R)\sqrt{\f m{|\Om|} e^{γ}} }\gamma\leq c(r) \leq \gamma. 
	\end{equation}
	for $0\leq r\leq R$.
\end{proposition}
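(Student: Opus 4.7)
The upper estimate $c\le\gamma$ is immediate from Lemma~\ref{c-between-zero-and-gamma}, so only the lower bound requires an argument. The plan is to construct an explicit radial subsolution that matches the Robin condition exactly and then compare it with $c$ via a one-dimensional maximum principle along the radial variable.

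First, I extract an upper bound on the multiplier $\alpha$ in the representation $n=\alpha e^c$ (Lemma~\ref{unique-up-to-multiple}): since $c\ge 0$ gives $e^c\ge 1$, we have $m=\alpha\int_\Omega e^c\ge\alpha|\Omega|$, so $\alpha\le m/|\Omega|$. Setting $\mu:=\sqrt{(m/|\Omega|)\,e^{\gamma}}$, this yields $\alpha e^{\gamma}\le \mu^2$. The candidate subsolution is
\[
 v(r) := \frac{g\gamma}{g+\mu}\,e^{\mu(r-R)}, \qquad r\in[0,R],
\]
whose coefficient is rigged so that a direct check gives $v'(R)+g\,v(R)=g\gamma$, i.e.\ $v$ satisfies the same Robin condition as $c$ with equality. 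Since $v''=\mu^2 v$, the radial expression of the Laplacian yields
\[
 v''+\frac{N-1}{r}\,v' \;\ge\; \mu^2 v \;\ge\; \alpha e^{\gamma}v \;\ge\; \alpha e^c v \quad\text{on }(0,R),
\]
so the difference $w:=v-c$ satisfies
\[
 w''+\frac{N-1}{r}\,w'-\alpha e^c\,w\;\ge\;0 \text{ on }(0,R), \qquad w'(R)+g\,w(R)=0.
\]

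Next I run the maximum principle on $[0,R]$ to conclude $w\le 0$. Supposing, for contradiction, that $w$ attains a positive maximum somewhere, an interior maximum $r_0\in(0,R)$ is impossible because $w'(r_0)=0$ and $w''(r_0)\le 0$ would force the displayed inequality to read $-\alpha e^{c(r_0)}w(r_0)\ge 0$, contradicting $\alpha>0$ and $w(r_0)>0$. A maximum at $r=R$ would require $w'(R)\ge 0$, in contradiction with $w'(R)=-g\,w(R)<0$ obtained from the Robin relation together with positivity of $w(R)$. Finally, a maximum at $r=0$ would need $w'(0)\le 0$, whereas $c'(0)=0$ (smoothness of the radial solution at the origin) and $v'(0)=\mu\frac{g\gamma}{g+\mu}e^{-\mu R}>0$ give $w'(0)>0$. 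Hence $w\le 0$ throughout $[0,R]$, which is the stated estimate.

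The one mildly delicate point I anticipate is that $v$, viewed as a function on the ball $B_R\subset\mathbb{R}^N$, is only Lipschitz and not $C^2$ at the origin, so the comparison cannot be phrased directly via the $N$-dimensional maximum principle with Hopf's lemma applied on both boundary components. Reducing the whole argument to the one-dimensional radial equation for $c$, where $v$ is smooth and the identity $c'(0)=0$ serves as the correct substitute for a Hopf-type condition at the inner endpoint, removes this difficulty without any extra machinery.
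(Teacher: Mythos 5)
Your proof is correct, but it takes a genuinely different route from the paper's. The paper works with $c$ itself: using $\partial_r c\ge 0$ from Lemma~\ref{lem.monotone} it discards the first-order term in \eqref{equation-radial2} and bounds $n\le n(R)$, then multiplies $\partial_{rr}c\le n(R)c$ by $\partial_r c$ and integrates to obtain $\partial_r c\le\sqrt{n(R)}\,c$, from which a Gr\"onwall argument backwards from $R$ together with the Robin condition first yields $c(R)\ge\frac{g}{g+\sqrt{n(R)}}\gamma$ and then \eqref{estimate.for.boundary}, after the final replacement $n(R)=\alpha e^{c(R)}\le\frac{m}{|\Om|}e^{\gamma}$. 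You instead construct the explicit radial subsolution $v(r)=\frac{g\gamma}{g+\mu}e^{\mu(r-R)}$ with $\mu^2=\frac{m}{|\Om|}e^{\gamma}\ge\alpha e^{c}$, matched to the Robin data, and compare it with $c$ via the one-dimensional maximum principle; your endpoint analysis is sound ($w'(R)=-g\,w(R)$ from the shared Robin condition, and $w'(0)=v'(0)>0$ because $\partial_r c(0)=0$ for the smooth radial solution), and your closing remark about $v(|x|)$ failing to be $C^1$ at the origin correctly identifies why the reduction to the radial ODE is the right formulation. The trade-off: the paper's argument leans on Lemma~\ref{lem.monotone} but needs only elementary integration, and as a by-product records the intermediate estimates \eqref{radial-oneD-cprime}--\eqref{c.at.R}; yours avoids Lemma~\ref{lem.monotone} altogether, exhibits a comparison function whose form makes the origin of the bound transparent, and costs only the routine case distinction in the maximum principle. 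Both proofs ultimately rest on the same two quantitative inputs, namely $\alpha|\Om|\le m$ and $c\le\gamma$, and your $v$ is exactly the function to which the paper's chain of differential inequalities integrates, which is why the two arguments land on the identical constant in \eqref{estimate.for.boundary}.
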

\begin{proof}
First, not unlike \eqref{equation-radial}, we rewrite the equation for $c$ in spherical coordinates to
\begin{equation}\label{equation-radial2}
\partial_{rr} c+\frac{N-1}{r}\partial_r c=\frac{1}{r^{N-1}}\partial_r\left(r^{N-1}\partial_r c\right)=\alpha ce^c=nc, \end{equation}
for some $\alpha>0$ as in \eqref{relationbetweennandc}. 
 According to Lemma \ref{lem.monotone},  $\partial_r c\geq0$.
 In particular, this makes the second summand on the left of \eqref{equation-radial2} unnecessary and, moreover, shows that $n$ is maximal at $R$, so that
\[
\partial_{rr} c\leq n(R) c  \;\;\;\text{in } (0,R),\qquad \partial_r c(0)=0,\qquad \partial_r c(R)=gγ-gc(R). 
\]
Multiplying $\partial_{rr} c\le n(R)c$  
by $\partial_r c$, we obtain $\partial_r((\partial_r c)^2)\leq n(R) \partial_r(c^2)$ and thus 
\[
(\partial_r c)^2 \leq n(R) c^2\qquad\text{in } (0,R], 
\]
using that $c\geq0$. As $\partial_r c$ is non-negative in $(0,R)$ and $c(0)\geq0$,
\begin{equation}\label{radial-oneD-cprime}
\partial_r c \leq \sqrt{n(R)}c \qquad \text{in } (0,R].
\end{equation}
 Using Grönwall's inequality, we obtain that
\begin{equation}\label{cR-und-c0}
c(R)\leq c(r) e^{\sqrt{n(R)} (R-r)}
\end{equation}
for every $0<r\leq R$. We observe that the boundary condition and \eqref{radial-oneD-cprime} enable us to estimate $c(R)$ by means of 
\[
0\leq gγ-gc(R)=∂_r c(R) \leq \sqrt{n(R)} c(R),
\]
which is equivalent to the first inequality in
\begin{equation}\label{c.at.R}
\frac{g}{g+ \sqrt{n(R)}}\gamma\leq c(R)\leq γ,
\end{equation}
whereas the second results from $c(R)\le γ$, a consequence of nonnegativity of $∂_rc$ and the boundary condition.
Combining \eqref{c.at.R} with \eqref{cR-und-c0}, we furthermore obtain that
\begin{equation}\label{bondaryestimate-almostdone}
\frac{g}{g+ \sqrt{n(R)}}e^{\sqrt{n(R)} (r-R)}\gamma\leq c(r) \leq \gamma\qquad \text{for all } r\in(0,R].
\end{equation}
If we use that $α|\Om|\le α\io e^c = m$ and account for \eqref{c.at.R}, we can conclude that 
\[
 n(R)=αe^{c(R)}\le \f{m}{|\Om|}e^{γ}, 
\]
so that \eqref{estimate.for.boundary} follows from \eqref{bondaryestimate-almostdone}.
\end{proof}

\subsection{The one-dimensional case}\label{sec:one-D}

In this section, let us consider the one-dimensional setting, that is, $\Om$ being an interval. If $(n,c)$ solves \eqref{system}, from the previous sections we know that $n=αe^c$ (where $α$ depends monotonically on the total mass $\io n$ of bacteria), and $c$ solves $c'' = αce^c$. Apparently, $c$ is strictly convex, thus having precisely one minimum in $\Om$. Without loss of generality, we may assume that this is the case at $0$ and consider the problem in $(0,L)$, posing a homogeneous Neumann boundary condition at $0$ and the original boundary condition of \eqref{system} at $L$. With $G:=g(L)$, we hence are dealing with 
\[
c''=αce^c \;\text{in } (0,L),\qquad c'(0)=0,\qquad c'(L)=Gγ-Gc(L). 
\]
Multiplying $c''=αce^c$ by $c'$, we obtain $((c')^2)'=(2α(c-1)e^c)'$ and thus 
\[
(c')^2 = 2α(c-1)e^c -2α(c_0-1)e^{c_0}, 
\]
where $c_0:=\inf c = c(0)$. 
Due to $c'(0)=0$ and $c''$ being positive, we know that also $c'$ is positive in $(0,L)$. Therefore 
\begin{equation}\label{oneD-cprime}
c' = \sqrt{2α} \sqrt{(c-1)e^c - (c_0-1)e^{c_0}}
\end{equation}
and for every $x\in[0,L]$ we obtain 
\begin{equation}\label{oneD-implicitrepresentation}
\f1{\sqrt{2α}} \int_{c_0}^{c(x)} \f1{\sqrt{(c-1)e^c - (c_0-1)e^{c_0}}} dc = x.  
\end{equation}

In order to eliminate the unknown parameter $c_0$, we observe that the boundary condition and \eqref{oneD-cprime} enable us to express $c_0$ in terms of $c(L)$ by means of 
\[
Gγ-Gc(L) = \sqrt{2α} \sqrt{(c(L)-1)e^{c(L)} - (c_0-1)e^{c_0}}, 
\]
and that, thereby, finally, $c(L)$ can be obtained from \eqref{oneD-implicitrepresentation} with $x=L$. 

\subsection{Numerics} \label{sec:numerics}
In this section we show numerical solution of the system \eqref{system} in three dimensional domains. All numerical examples were implemented within the finite element library NGSolve/Netgen, see \cite{netgen,schoeberl2014cpp11}\footnote{The authors like to thank  Matthias Hochsteger, Lukas Kogler, Philip Lederer and Christoph Wintersteiger for their support using the NGSolve/Netgen library.}.

\subsubsection*{Example 1:} Let $\Omega=B_1(0)\subset \mathbb R^3$, $\gamma=g=1$ and $m=10\cdot |\Omega|=\frac{40}{3}\pi$. 
According to Theorem \ref{thm:main2}, there exists a unique classical solution $(n,c)$ of the system \eqref{system}. The uniqueness directly implies that $n$ and $c$ are radially symmetric. This can be observed on the cross section $\{x\in B_1(0): x_1=0\}$ in Figure \ref{fig2} and \ref{fig3}. Figure \ref{fig1} shows the dependency of the bacteria density $n$ and oxygen concentration $c$ on the radius $|x|$. The plots confirm that $n$ and $c$ are convex as proved by Theorem \ref{thm:main2}. Moreover, we see that $n$ is one magnitude larger than $c$. This can be explained as follows. By Lemma \ref{unique-up-to-multiple} and Lemma \ref{alpha-to-mass-injective}, we have that 
$n=\alpha e^c$ for some $\alpha>0$, where $\alpha$ is uniquely determined by $m$. In particular, 
for every $x,y\in \Omega$,
\[
n(x)\ge \frac{n(y)}{\max n} \cdot \min n.
\]
Hence, for all $x\in \Omega$,
\begin{align*}
n(x)\ge \frac{\min n}{\max n} \cdot \frac1{|B_1(0)|}\int_{B_1(0)} n(y) dy = e^{\min c - \max c} \frac{m}{|B_1(0)|}. 
\end{align*}
The numerical solution $c$ is bounded from below by $0.088$ and from above by $0.311$. Inserting these bounds together with $m/|B_1(0)|=10$, we obtain that $n(x)\geq 8$, which we can also observe in Figure \ref{fig1}.

\begin{center}
	\begin{figure}[!ht]
		\includegraphics[width=\linewidth]{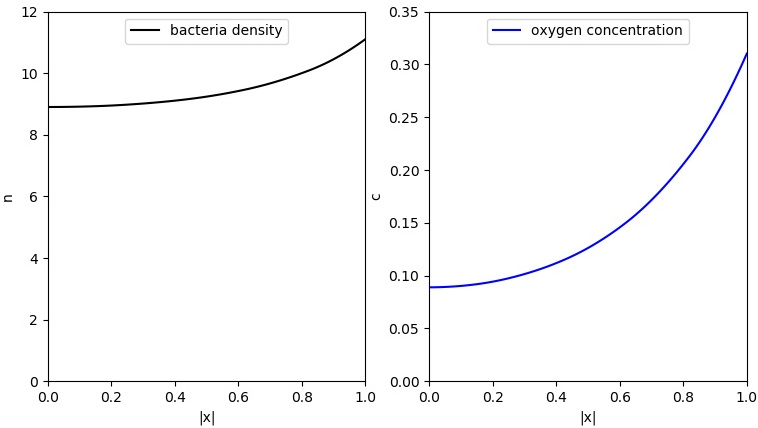}
		\caption{Dependency of the bacteria density $n$ and oxygen concentration $c$ on the radius in Example 1}
		\label{fig1}	
\end{figure}
\end{center}
\begin{center}
	\begin{figure}[!ht]
	\includegraphics[width=\linewidth]{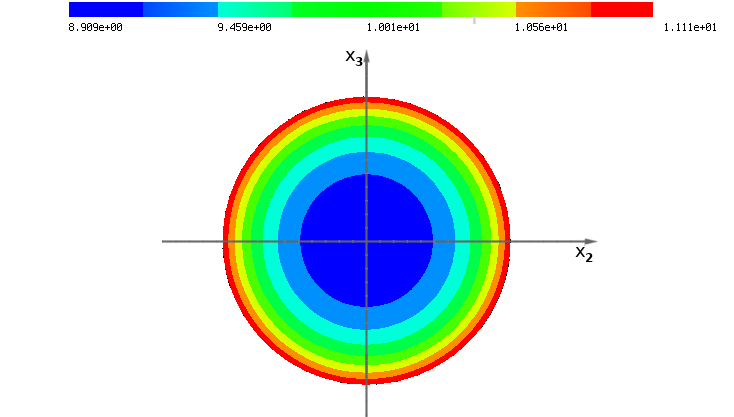}
	\caption{Bacteria density $n$ visualized on the cross section $\{x\in B_1(0): x_1=0\}$, Example 1}
	\label{fig2}	
	\end{figure}

\begin{figure}[!ht]
	\includegraphics[width=\linewidth]{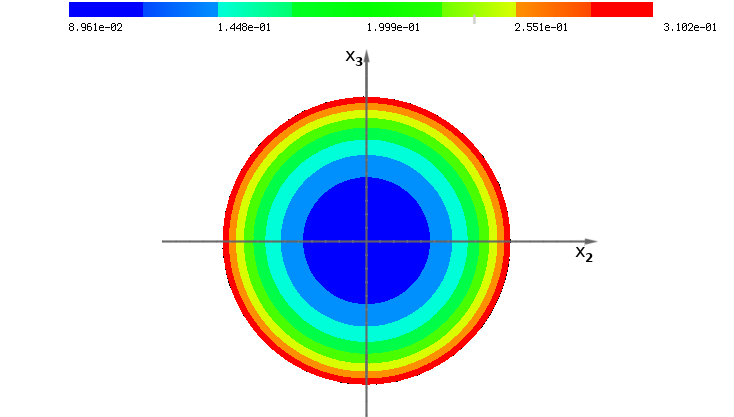}
 \caption{Oxygen concentration $c$ visualized on the cross section $\{x\in B_1(0): x_1=0\}$, Example 1}
 \label{fig3}	
	\end{figure}
	
\end{center}
\newpage
\subsubsection*{Example 2:} Let $\Omega=\{x\in B_1(0): x_3>-0.5\}$ and $\gamma=1$ and $g(x)=1 $ if $|x|=1$ and $g(x)=0$ else. The flat boundary of $\Omega$ shall model the boundary between water and a solid surface. We consider the solution with mass $\int_{\Omega}n=10|\Omega|$. Note that neither $\Omega$ nor $g$ are covered by the analytical results in this paper. Smooth approximations thereof, however, are. Assuming uniqueness of the solution $(n,c)$ yields that the solution is symmetric w.r.t.\ the axis $\{(0,0,z):z\in\mathbb R\}$. Therefore, the cross section $\{x\in\Omega: x_1=0\}$ as shown in Figure \ref{fig4} and \ref{fig5} contains all the information. We can see in Figure \ref{fig4} and \ref{fig5} that the bacteria density and the concentration of the oxygen have their largest value at the interface between water and gas. For the oxygen concentration, this is reasonable, because this interface acts as an oxygen source. As the bacteria prefer regions of higher oxygen 
concentration, they tend to move to this part of the boundary. Similarly to the previous example the difference the bacteria density is one magnitude larger than the oxygen concentration.
\begin{center}
	
	\begin{figure}[!ht]
	\includegraphics[width=\linewidth]{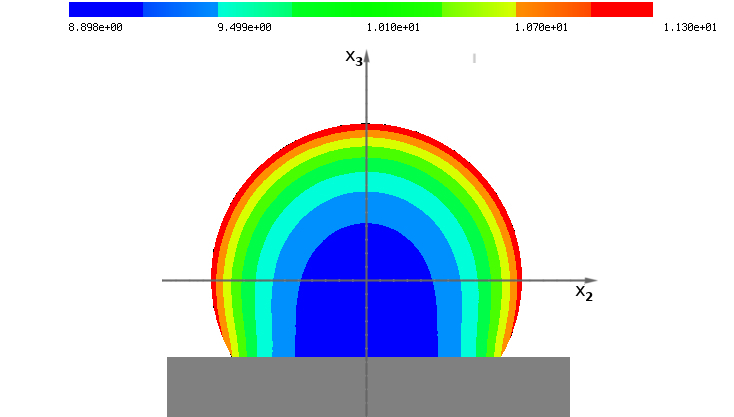}
	\caption{Bacteria density $n$ visualized on the cross section $\{x\in \Omega: x_1=0\}$, Example 2}
	\label{fig4}
\end{figure}
\begin{figure}[!ht]
	\includegraphics[width=\linewidth]{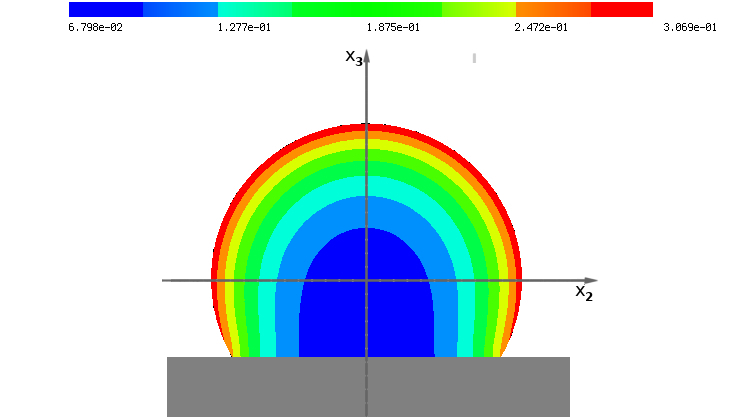}
	\caption{Oxygen concentration $c$ visualized on the cross section $\{x\in \Omega: x_1=0\}$, Example 2}
	\label{fig5}
\end{figure}

\end{center}

\newpage
\subsubsection*{Example 3:} 
In the third example, we consider again $\Omega=\{x\in B_1(0): x_3>-0.5\}$ and $\gamma=1$ and $g(x)=1 $ if $|x|=1$ and $g(x)=0$ else.
The analytical results in this article have already demonstrated that high concentrations near the boundary can largely be explained by the influence of the boundary condition. The experimental setting and the model in \cite{tuval} additionally involved interaction with the surrounding fluid. In this simulation, let us hence couple the equation to the stationary Navier-Stokes equation modeling the flow of water inside the drop as in \cite{tuval}, i.e.
\begin{equation*}
\begin{cases}
u\cdot \nabla n= \Delta n - \nabla\cdot(n\nabla c),\\
u \cdot \nabla c= \Delta c - nc,\\
u\nabla u = \Delta u +\nabla P - n\nabla \Phi, \qquad \nabla \cdot u= 0.
\end{cases}
\end{equation*}
We choose a relatively strong gravitational potential $\Phi(x_1,x_2,x_3)=100x_3$ in order to see the difference to system \eqref{system}. The boundary conditions are given by 
\begin{equation*}
\begin{aligned}
 \delny c &= (1-c)g&\text{on } \dOm\\
\delny n &= n\delny c&\text{on } \dOm\\
u &= 0&\text{on } \dOm.
\end{aligned}
\end{equation*}
Thus, this system is the stationary problem corresponding to the system \eqref{eq:ctns}.
The following plots show the numerical solution for given mass $\io n=10|\Omega|$.

\begin{center}
	
	\begin{figure}[!ht]
	\includegraphics[width=\linewidth]{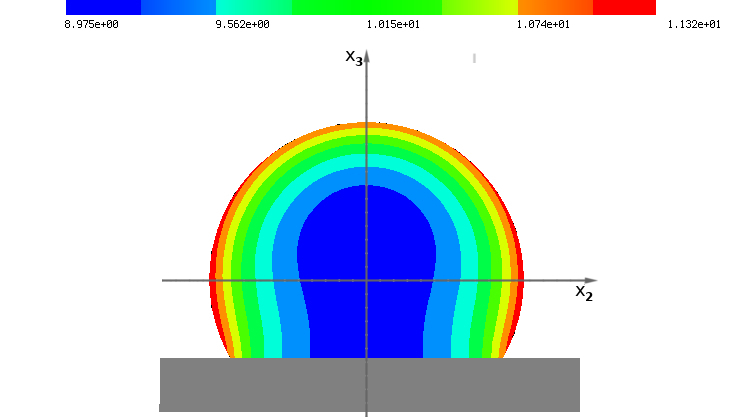}
	\caption{Bacteria density $n$ visualized on the cross section $\{x\in \Omega: x_1=0\}$, Example 3}
	\label{fig6}
\end{figure}
\begin{figure}[!ht]
	\includegraphics[width=\linewidth]{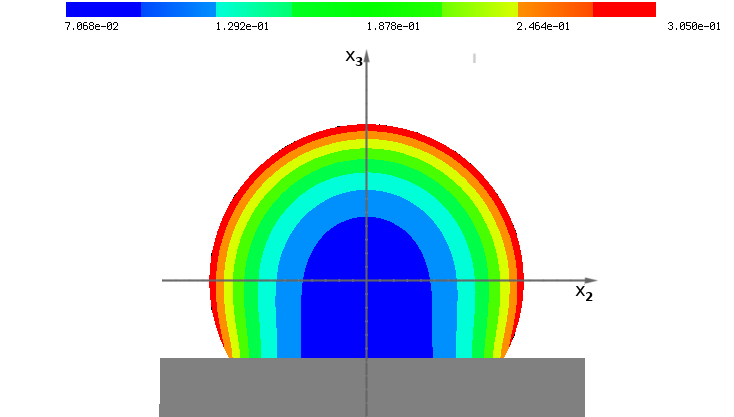}
	\caption{Oxygen concentration $c$ visualized on the cross section $\{x\in \Omega: x_1=0\}$, Example 3}
	\label{fig7}
\end{figure}
\end{center}

We again visualize $n$, $c$, $u$ and $P$ on the cross section $\{x\in\Omega: x_1=0\}$. The plot of the oxygen concentration $c$ in Figure \ref{fig7} is similar to the previous example. This can again be explained by the water-gas interface acting as an oxygen source. However, the bacteria density visualized in Figure \ref{fig6} is different to Figure \ref{fig4} because of the gravitation in the downward direction $\nabla\Phi=(0,0,-100)$. Therefore, the bacteria density at the top of the water drop is smaller than on the sides. Note that again the maximal bacteria density is to be expected at the water-gas interface because of the preference of the bacteria to higher oxygen concentration, which can be seen in Figure \ref{fig7}.

	
In Figure \ref{fig8}, we see that the flow $u$ is in downward direction at the sides of the water drop, where the bacteria density reaches its maximum. Inside the water drop, where $n$ takes its smallest value, the flow is directed into the opposite direction. The reason for this relies on the modeling assumptions that the flow is generated by the gravitational force of the bacteria. This moreover entails that the pressure $P$ admits its maximum at the bottom of the drop (see Figure \ref{fig9}).

\begin{center}
	\begin{figure}[!ht]
	\includegraphics[width=\linewidth]{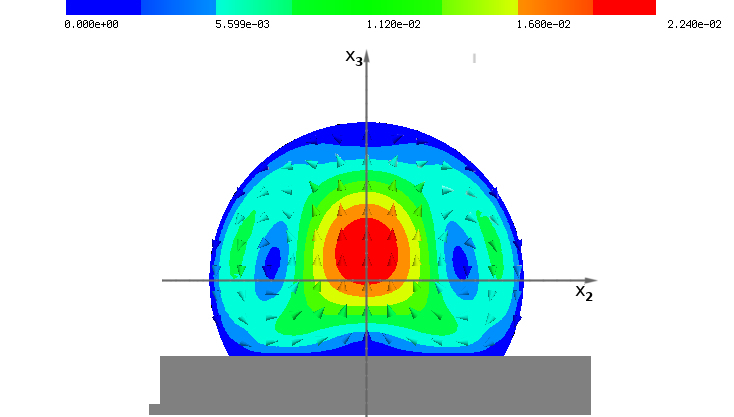}
	\caption{Flow $u$ visualized on the cross section $\{x\in \Omega: x_1=0\}$ - the colors reflect the magnitude $|u|$ and the arrows show the direction $\frac{u}{|u|}$, Example 3}
	\label{fig8}
\end{figure}

\end{center}
\begin{center}
	\begin{figure}[!ht]
	\includegraphics[width=\linewidth]{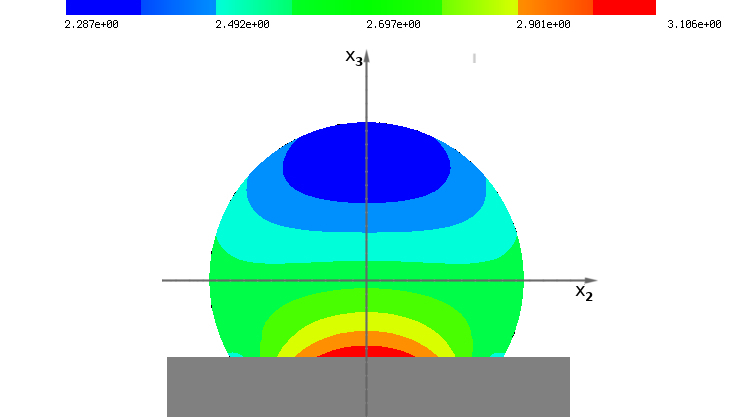}
	\caption{Pressure $p$ visualized on the cross section $\{x\in \Omega: x_1=0\}$, Example 3}
	\label{fig9}
\end{figure}

\end{center}
\newpage 
\section*{Acknowledgements}
The first author was funded by the Austrian Science Fund (FWF) project F 65. The second author acknowledges support of the {\em Deutsche Forschungsgemeinschaft} within the project {\em Analysis of chemotactic cross-diffusion in complex frameworks}.

{\footnotesize

\begin{thebibliography}{10}

\bibitem{atkins}
P.~W. Atkins and J.~d. Paula.
\newblock {\em Atkins’ Physical chemistry}.
\newblock Oxford University Press,, New York, 8th edition, 2006.

\bibitem{BBTW}
N.~Bellomo, A.~Bellouquid, Y.~Tao, and M.~Winkler.
\newblock Toward a mathematical theory of {K}eller-{S}egel models of pattern
  formation in biological tissues.
\newblock {\em Math. Models Methods Appl. Sci.}, 25(9):1663--1763, 2015.

\bibitem{biler}
P.~Biler.
\newblock Local and global solvability of some parabolic systems modelling
  chemotaxis.
\newblock {\em Adv. Math. Sci. Appl.}, 8(2):715--743, 1998.

\bibitem{blm3}
T.~Black, J.~Lankeit, and M.~Mizukami.
\newblock A {K}eller--{S}egel--fluid system with singular sensitivity:
  {G}eneralized solutions.
\newblock 2018.
\newblock preprint, arXiv: 1805.09085.

\bibitem{braukhoff}
M.~Braukhoff.
\newblock Global (weak) solution of the chemotaxis-{N}avier-{S}tokes equations
  with non-homogeneous boundary conditions and logistic growth.
\newblock {\em Ann. Inst. H. Poincar\'e Anal. Non Lin\'eaire},
  34(4):1013--1039, 2017.

\bibitem{cao_lankeit}
X.~Cao and J.~Lankeit.
\newblock Global classical small-data solutions for a three-dimensional
  chemotaxis {N}avier-{S}tokes system involving matrix-valued sensitivities.
\newblock {\em Calc. Var. Partial Differential Equations}, 55(4):Art. 107, 39,
  2016.

\bibitem{chae_kang_lee}
M.~Chae, K.~Kang, and J.~Lee.
\newblock Global existence and temporal decay in {K}eller-{S}egel models
  coupled to fluid equations.
\newblock {\em Comm. Partial Differential Equations}, 39(7):1205--1235, 2014.

\bibitem{chertock_etal_numeric}
A.~Chertock, K.~Fellner, A.~Kurganov, A.~Lorz, and P.~A. Markowich.
\newblock Sinking, merging and stationary plumes in a coupled chemotaxis-fluid
  model: a high-resolution numerical approach.
\newblock {\em J. Fluid Mech.}, 694:155--190, 2012.

\bibitem{delPino_pistoia_vaira}
M.~del Pino, A.~Pistoia, and G.~Vaira.
\newblock Large mass boundary condensation patterns in the stationary
  {K}eller-{S}egel system.
\newblock {\em J. Differential Equations}, 261(6):3414--3462, 2016.

\bibitem{difrancesco_lorz_markowich}
M.~{Di Francesco}, A.~{Lorz}, and P.~A. {Markowich}.
\newblock {Chemotaxis-fluid coupled model for swimming bacteria with nonlinear
  diffusion: global existence and asymptotic behavior.}
\newblock {\em {Discrete Contin. Dyn. Syst.}}, 28(4):1437--1453, 2010.

\bibitem{dombrowskietal}
C.~Dombrowski, L.~Cisneros, S.~Chatkaew, R.~E. Goldstein, and J.~O. Kessler.
\newblock Self-concentration and large-scale coherence in bacterial dynamics.
\newblock {\em Phys. Rev. Lett.}, 93:098103, Aug 2004.

\bibitem{fan_zhao}
J.~{Fan} and K.~{Zhao}.
\newblock {Global dynamics of a coupled chemotaxis-fluid model on bounded
  domains.}
\newblock {\em {J. Math. Fluid Mech.}}, 16(2):351--364, 2014.

\bibitem{fan_jin}
L.~{Fan} and H.-Y. {Jin}.
\newblock {Global existence and asymptotic behavior to a chemotaxis system with
  consumption of chemoattractant in higher dimensions.}
\newblock {\em {J. Math. Phys.}}, 58(1):011503, 22, 2017.

\bibitem{feireisl_laurencot_petzeltova}
E.~Feireisl, P.~Lauren\c{c}ot, and H.~Petzeltov\'a.
\newblock On convergence to equilibria for the {K}eller-{S}egel chemotaxis
  model.
\newblock {\em J. Differential Equations}, 236(2):551--569, 2007.

\bibitem{GT}
D.~Gilbarg and N.~S. Trudinger.
\newblock {\em Elliptic partial differential equations of second order}.
\newblock Springer-Verlag, Berlin-New York, 1977.
\newblock Grundlehren der Mathematischen Wissenschaften, Vol. 224.

\bibitem{horstmann_nonsymmetric}
D.~Horstmann.
\newblock The nonsymmetric case of the {K}eller-{S}egel model in chemotaxis:
  some recent results.
\newblock {\em NoDEA Nonlinear Differential Equations Appl.}, 8(4):399--423,
  2001.

\bibitem{jiang_wu_zheng}
J.~Jiang, H.~Wu, and S.~Zheng.
\newblock Global existence and asymptotic behavior of solutions to a
  chemotaxis-fluid system on general bounded domains.
\newblock {\em Asymptot. Anal.}, 92(3-4):249--258, 2015.

\bibitem{kabeya_ni}
Y.~Kabeya and W.-M. Ni.
\newblock Stationary {K}eller-{S}egel model with the linear sensitivity.
\newblock {\em S\=urikaisekikenky\=usho K\=oky\=uroku}, (1025):44--65, 1998.
\newblock Variational problems and related topics (Kyoto, 1997).

\bibitem{knosalla_global}
P.~Knosalla.
\newblock Global solutions of aerotaxis equations.
\newblock {\em Appl. Math. (Warsaw)}, 44(1):135--148, 2017.

\bibitem{knosalla_nadzieja_stationary}
P.~Knosalla and T.~Nadzieja.
\newblock Stationary solutions of aerotaxis equations.
\newblock {\em Appl. Math. (Warsaw)}, 42(2-3):125--135, 2015.

\bibitem{lankeit_thresholds}
J.~Lankeit.
\newblock Chemotaxis can prevent thresholds on population density.
\newblock {\em DCDS-B}, 20(5):1499--1527, 2015.

\bibitem{lankeit_m3as}
J.~Lankeit.
\newblock Long-term behaviour in a chemotaxis-fluid system with logistic
  source.
\newblock {\em Math. Models Methods Appl. Sci.}, 26(11):2071--2109, 2016.

\bibitem{lankeit_wang}
J.~Lankeit and Y.~Wang.
\newblock Global existence, boundedness and stabilization in a high-dimensional
  chemotaxis system with consumption.
\newblock {\em Discrete and Continuous Dynamical Systems}, 37(12):6099--6121,
  2017.

\bibitem{lee_kim_numerical_bioconvection}
H.~G. Lee and J.~Kim.
\newblock Numerical investigation of falling bacterial plumes caused by
  bioconvection in a three-dimensional chamber.
\newblock {\em Eur. J. Mech. B Fluids}, 52:120--130, 2015.

\bibitem{lswx}
T.~{Li}, A.~{Suen}, M.~{Winkler}, and C.~{Xue}.
\newblock {Global small-data solutions of a two-dimensional chemotaxis system
  with rotational flux terms.}
\newblock {\em {Math. Models Methods Appl. Sci.}}, 25(4):721--746, 2015.

\bibitem{lin_ni_takagi}
C.-S. Lin, W.-M. Ni, and I.~Takagi.
\newblock Large amplitude stationary solutions to a chemotaxis system.
\newblock {\em J. Differential Equations}, 72(1):1--27, 1988.

\bibitem{Lorz}
A.~Lorz.
\newblock Coupled chemotaxis fluid model.
\newblock {\em Math. Models Methods Appl. Sci.}, 20(6):987--1004, 2010.

\bibitem{Nadirashvili}
N.~S. Nadirashvili.
\newblock On a problem with oblique derivative.
\newblock {\em Mathematics of the USSR-Sbornik}, 55(2):397, 1986.

\bibitem{hillenpainter_spatiotemporalchaos}
K.~J. Painter and T.~Hillen.
\newblock Spatio-temporal chaos in a chemotaxis model.
\newblock {\em Physica D: Nonlinear Phenomena}, 240(4-5):363--375, 2011.

\bibitem{preprint_zhaoyin}
Y.~Peng and Z.~Xiang.
\newblock Global existence and convergence rates to a
  chemotaxis-{N}avier-{S}tokes system with mixed boundary conditions.
\newblock 2018.
\newblock preprint.

\bibitem{schaaf}
R.~Schaaf.
\newblock Stationary solutions of chemotaxis systems.
\newblock {\em Trans. Amer. Math. Soc.}, 292(2):531--556, 1985.

\bibitem{netgen}
J.~Sch{\"o}berl.
\newblock {NETGEN An advancing front 2D/3D-mesh generator based on abstract
  rules}.
\newblock {\em Computing and Visualization in Science}, 1(1):41--52, 1997.

\bibitem{schoeberl2014cpp11}
J.~Sch\"oberl.
\newblock C++11 implementation of finite elements in {NGSolve}.
\newblock Technical Report ASC-2014-30, Institute for Analysis and Scientific
  Computing, September 2014.

\bibitem{senba_suzuki}
T.~Senba and T.~Suzuki.
\newblock Some structures of the solution set for a stationary system of
  chemotaxis.
\newblock {\em Adv. Math. Sci. Appl.}, 10(1):191--224, 2000.

\bibitem{tan_zhou}
Z.~{Tan} and J.~{Zhou}.
\newblock {Decay estimate of solutions to the coupled chemotaxis-fluid
  equations in $\mathbb{R}^3$.}
\newblock {\em {Nonlinear Anal., Real World Appl.}}, 43:323--347, 2018.

\bibitem{tao_bdoxygenconsumption}
Y.~Tao.
\newblock Boundedness in a chemotaxis model with oxygen consumption by
  bacteria.
\newblock {\em J. Math. Anal. Appl.}, 381(2):521--529, 2011.

\bibitem{tao_win}
Y.~{Tao} and M.~{Winkler}.
\newblock {Eventual smoothness and stabilization of large-data solutions in a
  three-dimensional chemotaxis system with consumption of chemoattractant.}
\newblock {\em {J. Differ. Equations}}, 252(3):2520--2543, 2012.

\bibitem{tuval}
I.~Tuval, L.~Cisneros, C.~Dombrowski, C.~W. Wolgemuth, J.~O. Kessler, and R.~E.
  Goldstein.
\newblock Bacterial swimming and oxygen transport near contact lines.
\newblock {\em Proc. Natl. Acad. Sci. USA}, 102(7):2277--2282, 2005.

\bibitem{wang-wei}
G.~Wang and J.~Wei.
\newblock Steady state solutions of a reaction-diffusion system modeling
  chemotaxis.
\newblock {\em Math. Nachr.}, 233/234:221--236, 2002.

\bibitem{win_transient}
M.~Winkler.
\newblock How far can chemotactic cross-diffusion enforce exceeding carrying
  capacities?
\newblock {\em J. Nonlinear Sci.}, pages 1--47, 2014.

\bibitem{win_fluid_konvergenzresultat}
M.~Winkler.
\newblock Stabilization in a two-dimensional chemotaxis-{N}avier-{S}tokes
  system.
\newblock {\em Arch. Ration. Mech. Anal.}, 211(2):455--487, 2014.

\bibitem{win_CalcVarPDE}
M.~{Winkler}.
\newblock {Boundedness and large time behavior in a three-dimensional
  chemotaxis-Stokes system with nonlinear diffusion and general sensitivity.}
\newblock {\em {Calc. Var. Partial Differ. Equ.}}, 54(4):3789--3828, 2015.

\bibitem{win_nutrienttaxis}
M.~{Winkler}.
\newblock {Asymptotic homogenization in a three-dimensional nutrient taxis
  system involving food-supported proliferation.}
\newblock {\em {J. Differ. Equations}}, 263(8):4826--4869, 2017.

\bibitem{win_transAMS}
M.~Winkler.
\newblock How far do chemotaxis-driven forces influence regularity in the
  {N}avier-{S}tokes system?
\newblock {\em Trans. Amer. Math. Soc.}, 369(5):3067--3125, 2017.

\bibitem{xia_ye}
X.~Ye.
\newblock Existence and decay of global smooth solutions to the coupled
  chemotaxis-fluid model.
\newblock {\em J. Math. Anal. Appl.}, 427(1):60--73, 2015.

\bibitem{zhang_li_decay}
Q.~Zhang and Y.~Li.
\newblock Convergence rates of solutions for a two-dimensional
  chemotaxis-{N}avier-{S}tokes system.
\newblock {\em Discrete Contin. Dyn. Syst. Ser. B}, 20:2751--2759, 2015.

\end{thebibliography}
\def\cprime{$'$}

}

\end{document}